\newcounter{alg}
\renewcommand{\vec}{\bm }
\newtheorem{theorem}{Theorem}[section]
\newtheorem{definition}{Definition}
\newtheorem{remark}{Remark}
\newtheorem{lemma}[theorem]{Lemma}
\newtheorem{proposition}[theorem]{Proposition}
\newtheorem{corollary}[theorem]{Corollary}
\newcommand{\mfd}{\mathfrak d}
\newcommand{\R}{\mathbb{R}}
\newcommand{\C}{\mathbb{C}}
\newcommand{\N}{\mathbb{N}}
\newcommand{\LP}{\operatorname{L}}
\newcommand{\Id}{\operatorname{Id}}
\newcommand{\sd}{\mathrm{d}}
\newcommand{\Exp}{\mathbb{E}}
\newcommand{\prob}{\mathbb{P}}
\newcommand{\Prob}{\prob}
\renewcommand{\Pr}{\prob}
\DeclareDocumentCommand \one { o }
{%
\ensuremath
\IfNoValueTF {#1}
{\mathbf{1}  }
{\ensuremath{\mathbf{1}\left\{ {#1} \right\} }}%
}
\newcommand{\lawequals}{\overset{\mathscr{L}}{=}}
\DeclareDocumentCommand{\Prto} {o} {
\IfNoValueTF {#1}
 {\overset{\Pr}{\longrightarrow}}
 { \xrightarrow[ #1 \to \infty]{\Pr }}
}
\DeclareDocumentCommand{\Asto} {o} {
\IfNoValueTF {#1}
 {\overset{\operatorname{a.s.}}{\longrightarrow}}
 {
 \xrightarrow[ #1 \to \infty]{\operatorname{a.s.} }
 }
}
\DeclareDocumentCommand{\Mgfto} {o} {
\IfNoValueTF {#1}
{\overset{\operatorname{mgf}}{\longrightarrow}}
{ \xrightarrow[ #1 \to \infty]{\operatorname{mgf} }}
}
\DeclareDocumentCommand{\Wkto} {o} {
\IfNoValueTF {#1}
 {\overset{(d)}{\longrightarrow}}
 { \xrightarrow[ #1 \to \infty]{(d) }}
}
\DeclareDocumentCommand{\To} {o} {
\IfNoValueTF {#1}
 {\rightarrow}
 { \xrightarrow[]{#1 \to \infty }}
}
\DeclareDocumentCommand \LPto { O{1} }
{\overset{\operatorname{\LP^{#1}}}{\longrightarrow}}
\title[Universality for the conjugate gradient and MINRES algorithms]{Universality for the conjugate gradient and MINRES algorithms on sample covariance matrices}
\author{Elliot Paquette}
\address{The Ohio State University, Columbus, OH} 
\email{paquette.30@osu.edu}
\author{Thomas Trogdon}
\address{University of Washington, Seattle, WA}
\email{trogdon@uw.edu}
\thanks{This work was supported in part by NSF DMS-1753185, DMS-1945652 (TT).}
\keywords{Sample covariance matrices, conjugate gradient, MINRES, Wishart distribution}
\subjclass[2010]{65F10, 60B20}
\begin{document}
\maketitle
\begin{abstract}
    We present a probabilistic analysis of two Krylov subspace methods for solving linear systems.  We prove a central limit theorem for  norms of the residual vectors that are produced by the conjugate gradient and MINRES algorithms when applied to a wide class of sample covariance matrices satisfying some standard moment conditions.  The proof involves establishing a four moment theorem for the so-called spectral measure, implying, in particular, universality for the matrix produced by the Lanczos iteration.  The central limit theorem then implies an almost-deterministic iteration count for the iterative methods in question.
\end{abstract}

\section{Introduction}
Sample covariance matrices are one of the oldest class of random matrices.  One can trace their theory at least back to the seminal work of Wishart \cite{WISHART1928}.  Specifically, Wishart considered matrices of the form
\begin{align}\label{eq:wishart}
    W = \frac{1}{M} X X^T
\end{align}
where $X$ is an $N \times M$ matrix whose entries are independent and identically distributed (iid) standard normal random variables.  Such matrices provide an estimator for the covariance matrix of the columns of $X$, and the Wishart distribution can play the role of the null distribution in covariance estimation.  Wishart matrices arise in other settings too, and particularly relevant to this paper, they appear in the seminal work
of Goldstine and von Neumman \cite{Goldstine1951} on the numerical inversion of matrices.

Recently, there has been increasing interest in understanding how algorithms from numerical linear algebra and beyond act on random matrices.  Specifically,  this allows one to give a precise average-case analysis of the algorithms, replacing the standard worst-case estimates/bounds.  For non-iterative methods such as Gaussian elimination, one looks for average-case bounds on rounding errors (see \cite{Sankar2006}, for example).  For iterative methods, more questions can be asked, the most basic of which is the question, ``In exact arithmetic, how many iterations are required, on average, to solve a problem?"  The simplex method from linear programming was addressed in this context by many authors \cite{Borgwardt1987,Smale1983,Spielman2001}.  In these works, the notion of average-case is typically restricted to one ensemble, or distribution.  Indeed, the natural criticism of a simple average-case analysis is that the outcome could be ensemble-dependent, and thus it only has predictive power for a small subset of real-world phenomena.

So, in the context of average-case analysis, it becomes important to show that any arbitrary modeling choices made in defining the ensemble have a limited effect.
In the probability literature, this concept is called \emph{universality}, and it has been studied extensively for many years.  The most famous example of universality is the central limit theorem which states that for sufficiently large $M$, the sums
\begin{align*}
    S_M = \frac{1}{M} \sum_{j=1}^M X_j
\end{align*}
for iid $(X_j)_{j \geq 1}$ concentrate on the mean of $X_1$ (and hence $X_j$ for every $j$) and have small fluctuations of size $M^{-1/2}$ about this mean that are asymptotically normally distributed.  This is true, as soon as the random variables have a finite second moment, and more to the point, it does not depend on any further information about the distribution beyond its first two moments.  It can be argued that this particular universality explains the peculiar prevalence and usefulness of the normal distribution in statistics and nature.

Universality has been featured as a particularly important central feature of random matrix theory, especially in the last 20 years.  Many quantities, such as the largest eigenvalue of $W$, are universal --- they have fluctuations that are independent of the distribution on entries of $W$, with some mild moment conditions. The specific statement for the largest eigenvalue $\lambda_1(W)$ of $W$ is
\begin{align} \label{eq:largest_eval}
    \lim_{M \to \infty} \mathbb P \left( c_d N^{2/3}\left(\lambda_1(W) - (1 + \sqrt{\mfd })^2 \right) \leq t \right) = F_1(t), \quad \mfd = \frac{N}{M},
\end{align}
where $F_1(t)$ is the cumulative distribution function for the Tracy--Widom $(\beta = 1)$ distribution (see \cite{Bai2010}, for example).  Here we suppose that $\mfd \To[M] d$ where $0 < d < \infty$.  If we chose $X$ to have complex entries ($W = \frac {1}{M} XX^*$) then we would arrive at the Tracy--Widom $(\beta = 2)$ distribution. Specifying real versus complex through $\beta =1$ versus $\beta = 2$ is common practice in the random matrix literature and we continue this practice in the current work.

Universality was first combined with the average-case analysis of algorithms in \cite{DiagonalRMT}, then expanded in \cite{Deift2014a}, with rigorous results presented in \cite{Deift2017b,Deift2018}.  See \cite{Deift2017c} for a review.  Here we summarize a result found in \cite{Deift2017b} concerning the power method.  The power method itself is the simple iteration
\begin{align*}
    \vec y_k &= W \vec x_{k-1}, \quad k = 1,2,\ldots\\
    \nu_k &=  \vec y_k^T  \vec x_{k-1},\\
    \vec x_k &= \vec y_k/\|\vec y_k\|_2,
\end{align*}
where $\vec x_0$ is a starting unit vector that is often, in practice, chosen randomly.  If, for example, $W$ is positive definite, then $\nu_k \to \lambda_1(W)$ as $k \to \infty$.  A relevant question is to understand how many iterations are required to properly approximate $\lambda_1(W)$.  Given the halting time
\begin{align*}
    T(W,\vec x_0, \epsilon) = \min \{k:  |\nu_k - \nu_{k-1}| < \epsilon^2 \},
\end{align*}
 a result from \cite{Deift2017b} gives the distributional limit
 \begin{align}\label{eq:Tlim}
    \lim_{N \to \infty} \mathbb P \left( \frac{T(W,\vec x_0,\epsilon)}{\tilde c_d N^{2/3}(\log \epsilon - \frac 2 3 \log N)}\leq t \right) = F_\beta^{\mathrm{gap}}(t), \quad \epsilon \leq N^{-5/3 - \sigma},
\end{align}
for $t \geq 0$, $\sigma > 0$ and a constant $\tilde c_d$.  Here $F_\beta^{\mathrm{gap}}(t)$ can be expressed in terms of the limiting distribution of $\frac{1}{N^{2/3}(\lambda_1(W) - \lambda_2(W))}$. But, more importantly, $F_\beta^{\mathrm{gap}}(t)$ only depends on $\beta$ and not on the precise distribution on the entries of $X$.  One may also consider the distribution of  $\nu_k - \nu_{k-1}$  as $M \to \infty$ and ask whether it is universal.

The purpose of this article is three-fold.
\begin{itemize}
\item  We present a full derivation of distributional formulae for the conjugate gradient algorithm (CGA) and the MINRES algorithm applied to linear systems $W \vec x = \vec b$ where $W$ is distributed as in \eqref{eq:wishart}, addressing both the real and complex cases.  A formula for the CGA applied to the normal equations $W \vec x = \frac{X}{\sqrt{M}} \vec b$ is also given.  This elementary derivation pulls on many well-known results at the intersection of numerical linear algebra and random matrix theory.  In particular, the derivation involves many algorithms that are well-known to the applied mathematics community:  the QR factorization, Golub--Kahan bidiagonalization, singular value decomposition, Lanczos iteration and Cholesky factorization.
\item We then show how universality theorems for the so-called anisotropic local law \cite{Knowles2017} can be upgraded to give universality theorems for the moments of discrete measures that arise in the Lanczos and conjugate gradient algorithms.  This is the key component in showing that the behavior determined in the asymptotic analysis of the formulae in the case of Gaussian matrices indeed persists for a wide class of non-Gaussian matrices giving universality for the norms of residual and error vectors for the CGA and MINRES algorithms.  In the well-conditioned case (i.e., $\mfd \To[M] d \in (0,1)$), the number of iterations of the algorithm to achieve a tolerance $\epsilon$ (i.e., the halting time) is almost deterministic.
\item Because the calculations are so explicit and the estimates are so exact, this work can be viewed as a benchmark for the average-case analysis of an algorithm.  This shows that it is indeed possible to completely analyze an algorithm, in a specific regime, applied to wide class of random matrix distributions.
\end{itemize}

Currently, the small $\epsilon$ (i.e., $\epsilon = \epsilon_M \To[M] 0$) behavior of the CGA and MINRES algorithms on Wishart matrices is open.  By this, we are referring to determining the (asymptotic) distribution on the number of iterations required to achieve a tolerance of $\epsilon$.  Numerical experiments indicate that a universality statement analogous to \eqref{eq:Tlim} holds for the CGA provided $M$ and $N$ are scaled appropriately \cite{Deift2014}, the limiting distribution is conjectured to be Gaussian \cite{Deift2015} and the leading-order behavior is conjectured in \cite{Menon2016}.  

So, in this paper we focus on fixed $\epsilon$ while running the algorithms $O(1)$ steps.  The leading-order analysis along these lines was completed for Gaussian entries in \cite{Deift2019b}.  This confirmed that the deterministic analysis of Beckermann and Kuijlaars \cite{Beckermann2001} (see also \cite{Kuijlaars2006}) holds in the random setting with overwhelming probability. In this paper we improve upon and simplify the results in \cite{Deift2019b} in many respects.  In particular, our exact distributional formulae (see Theorem~\ref{t:GaussianOnly}) can be used to establish many, but not all, of the results in \cite{Deift2019b}.  We then prove that the leading-order results in \cite{Deift2019b} are universal and provide the universal distributional limit (after rescaling) for the fluctuations. This also provides a universal, almost-deterministic halting time (see Remarks~\ref{rem:CG_halting} and \ref{rem:MINRES_halting}).  Such almost-deterministic halting times for the CGA were first observed in \cite{Deift2015} and proved in \cite{Deift2019b} in the Gaussian case.  See \cite{Paquette2020a} for similar results in the case of gradient descent. 

While our analysis for the CGA and MINRES algorithms is focused on sample covariance matrices of the form \eqref{eq:wishart}, many other distributions should be analyzable.  One example would be $I + \gamma G$, $G = \frac{ X + X^T}{\sqrt{2N}}$ where $X$ is an $N \times N$ iid Gaussian matrix.  This is the shifted Gaussian orthogonal ensemble.  For a definite and well-conditioned problem, one should choose $\gamma < 1/2$.  Another interesting case is for sample covariance matrices $T^{1/2}XX^TT^{1/2},$ for deterministic positive definite matrix $T,$ which correspond to sample covariance matrices with non-identity covariance.  But in either of these cases, one can run the Lanczos iteration on it and ask about the distribution on the tridiagonalization that results.  The leading-order behavior is implied by \cite{Vargas2019}.   And indeed, as we discuss, this fact is qualitatively implied by the fact that the entries in the Lanczos matrix are differentiable functions of the moments of an associated spectral measure.  

    The paper is laid out as follows.  In this section we fix notation, introduce the Gaussian distributions from which we perturb and discuss the algorithms that we will analyze.  We present our main results in Theorems~\ref{t:deterministic}, \ref{t:GaussianOnly}, \ref{t:main-lo} and \ref{t:main}.  The section closes with a numerical demonstration of the theorems.  In Section~\ref{sec:SCMs} we introduce the notion of sample covariance matrices and the moment matching condition and discuss properties of basic algorithms applied to Gaussian matrices.  Section~\ref{sec:OPs} gives some properties of orthogonal polynomials that are critical in our calculations.  Section~\ref{sec:cg} gives a deterministic description of the CGA and MINRES algorithm along with the derivation of formulae for the errors that result from the algorithms. The main probabilistic contribution of the paper is in Section~\ref{sec:univ}.  It comes in the form of a ``four moment theorem" for the spectral measure.  Lastly, Section~\ref{sec:analysis} completes the proofs of our main theorems.

\subsection{Notation}

Throughout this article we use boldface, e.g., $\vec y$, to denote vectors.  The norm $\|\vec y\|_2^2 = \vec y^* \vec y$ gives the usual $2$-norm. The expression $W > 0$ indicates that $W$ is a real-symmetric or complex-Hermitian positive definite matrix.  And $W$ then induces an important norm $\|\vec y \|_W^2 = \vec y^* W\vec y$.  We then use $\lambda_1(W) \geq \lambda_2(W) \geq \cdots \lambda_N(W)$ to denote the eigenvalues of $W$.

The notation $\mathcal N_\beta(\mu,\sigma^2)$ refers to a real ($\beta =1)$ or complex ($\beta = 2$) normal random variable with mean $\mu$ and variance $\sigma^2$ and the symbol $\lawequals$ refers to equality in law.  The notation $x_M \Wkto[M] y$ denotes convergence in distribution, or weak convergence.  Additionally, since we will be using $\vec e_k$ to denote error vectors arising in the approximate solution of linear systems, we use $\vec f_1, \ldots, \vec f_n$ to denote the standard basis of $\mathbb R^n$ where $n$ is inferred from context. The notation $\chi_{\beta k}$ is used to denote the chi distribution with $\beta k$ degrees of freedom parameterized\footnote{Parameterizing a distribution is expressing it as a transformation of well-understood random variables.} by
\begin{align*}
    \chi_{\beta k} \lawequals \left(\sum_{j=1}^k |X_j|^2 \right)^{1/2},
\end{align*}
where $(X_j)_{j=1}^k$ are iid $\mathcal N_{\beta}(0,1)$ random variables.

We also encounter settings where the size of a random matrix or vector is increasing as a parameter $M \to \infty$.  We say that, for example, $(x_j)_{j=1}^M = : \vec x_M \Wkto[M] \vec y$, $\vec y = (y_j)_{j=1}^\infty$ in the sense of convergence of finite-dimensional marginals if for any finite set $S$ of integers
\begin{align*}
  (x_j)_{j \in S}  \Wkto[M] (y_j)_{j \in S}.
\end{align*}
This notion is very convenient as it allows one to bypass dimension mismatches between processes. Lastly, we will use subblock notation $X_{i:k,j:\ell}$ to denote the subblock of the matrix $X$ that contains rows $i$ through $k$ and columns $j$ through $\ell$.

\subsection{The Wishart distributions}

Suppose $X$ is an $N \times M$ matrix of iid $\mathcal N_\beta(0,1)$ normal random variables.  Then we say that $X \lawequals \mathcal G_\beta(N,M)$, and we say $W = XX^*/M$ has the $\beta$-Wishart distribution and write $W \lawequals \mathcal W_{\beta}(N,M).$   The $\beta$-Wishart distributions in the cases\footnote{The case $\beta = 4$ can be introduced using quarternions.} $\beta = 1,2$ has many important properties that we will use extensively.  In addition, classical algorithms from numerical linear algebra act on these matrices in a way that allows for explicit (distributional) calculations.

\subsection{The conjugate gradient and MINRES algorithms}  

The CGA \cite{Hestenes1952} is an iterative method to solve a linear system $W \vec x = \vec b$ where $W > 0$.  Supposing exact arithmetic, the algorithm is simplest to characterize in its varational form.  Define the Krylov subspace
\begin{align}
    \mathcal K_k = \mathrm{span} \{ \vec b, W \vec b, \ldots, W^{k-1} \vec b\}.\label{eq:variational}
\end{align}
Then the $k$th iterate, $\vec x_k$, of the CGA satisfies\footnote{Here we are characterizing the CGA with $\vec x_0 = 0$.}
\begin{align*}
    \vec x_k = \mathrm{argmin}_{\vec y \in \mathcal K_k} \| \vec x - \vec y \|_W.
\end{align*}
In Section~\ref{sec:cg} the algorithm that is often used to compute $\vec x_k$ effectively is presented but since our analysis assumes exact arithmetic, this algorithm is not needed to perform the analysis.

The MINRES algorithm (see Algorithm~\ref{a:minres} below) is another iterative method that works with $\mathcal K_k$ by again producing a sequence 
sequence of vectors
\begin{align*}
    \vec x_1 \to \cdots \to \vec x_k,
\end{align*}
but for the MINRES algorithm each vector $\vec x_k$ solves,
\begin{align*}
    \vec x_k &= \mathrm{argmin}_{\vec y \in \mathcal K_k} \|\vec b - W \vec y \|_2.
\end{align*}

For both the CGA and the MINRES algorithm we use the notation $\vec r_k (W,\vec b) := \vec b - W \vec x_k$ and $\vec e_k(W,\vec b) := \vec x - \vec x_k$ to denote the residual and error vectors, respectively.

\subsection{Main results}

We first establish some deterministic formulae.  The result for $\|\vec r_k\|_2$ in the CG algorithm is entirely classical as it encapsulates a well-known relation between $b_{k-1}$ in Algorithm~\ref{a:cga} below and the entries in the matrix generated by the Lanczos procedure (see \cite{Meurant2019}, for example).  The proof is found in Sections~\ref{sec:CG-non}, \ref{sec:MINRES-non} and \ref{sec:CG-ne}.  See Algorithm~\ref{a:lanczos} and the surrounding text for a discussion of the Lanczos iteration.
\begin{theorem}[Deterministic formulae]\label{t:deterministic}
Consider the Lanczos iteration applied to the pair $(W,\vec b)$ with $W > 0$ and $\| \vec b \|_2 = 1$. Suppose the iteration terminates at step $n \leq N$ producing a matrix $T = T(W,\vec b)$.   Let $T = HH^T$ be the Cholesky factorization (see Algorithm~\ref{a:chol} below) of $T$ where
\begin{align*}
    H = \begin{bmatrix} \alpha_0 \\
   \beta_0 & \alpha_1 \\
   & \beta_1 & \alpha_2 \\
   && \ddots & \ddots\\
   &&& \beta_{n-2} & \alpha_{n-1}
   \end{bmatrix}.
\end{align*}
\begin{enumerate}[(a)]
    \item For the CGA  on $W \vec x = \vec b$ with $\vec x_0 = 0$, for $k < n$,
    \begin{align*}
        \|\vec r_k(W,\vec b)\|_2 &= \prod_{j=0}^{k-1} \frac{\beta_j}{\alpha_j},\\
        \|\vec e_k(W,\vec b)\|_W &= \|\vec r_k(W,\vec b)\|_2\sqrt{\vec f_1^* (L_{k} L_{k}^T)^{-1} \vec f_1}, \quad L_{k} = H_{k+1:n,k+1:n}.
    \end{align*}
    \item For the MINRES algorithm on $W \vec x = \vec b$, for $k < n$,
    \begin{align*}
      \|\vec r_k(W,\vec b)\|_2 = \left(\displaystyle\sum_{j=0}^k \prod_{\ell = 0}^{j-1} \frac{\alpha_\ell^2}{\beta_\ell^2} \right)^{-1/2}.
    \end{align*}
\end{enumerate}
 And $\vec r_{n} = 0$.
\end{theorem}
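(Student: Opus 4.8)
The plan is to reduce everything to standard facts about the Lanczos iteration and orthogonal polynomials. Recall that running Lanczos on $(W,\vec b)$ with $\|\vec b\|_2 = 1$ produces an orthonormal basis $\vec q_0, \vec q_1, \ldots$ of the Krylov subspaces $\mathcal K_k$ (so $\vec q_0 = \vec b$) together with a tridiagonal matrix $T$ satisfying $W Q_k = Q_k T_k + \beta_{k-1} \vec q_k \vec f_k^*$ in the usual way, where $T_k = T_{1:k,1:k}$ and $Q_k = [\vec q_0 \mid \cdots \mid \vec q_{k-1}]$. The key observation is that $T$ is precisely the Jacobi matrix of the spectral measure $\mu = \sum_i |\vec u_i^* \vec b|^2 \delta_{\lambda_i(W)}$, where $\vec u_i$ are the eigenvectors of $W$; since $W > 0$, this measure is supported on $(0,\infty)$ and $T > 0$, so the Cholesky factorization $T = HH^T$ with positive diagonal exists and is unique.

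\textbf{Part (a), residual formula.} I would use the well-known identity that the CGA residual $\vec r_k$ is proportional to $\vec q_k$: indeed $\vec r_k \in \mathcal K_{k+1}$, is orthogonal to $\mathcal K_k$ (by the Galerkin/minimization property $\vec r_k \perp_W$-image conditions translate to $\vec r_k \perp \mathcal K_k$ for this normal-equation-free setting), hence $\vec r_k = c_k \vec q_k$ for a scalar $c_k$, and $\|\vec r_k\|_2 = |c_k|$ since $\|\vec q_k\|_2 = 1$. To pin down $|c_k|$, write $\vec r_k = p_k(W)\vec b$ where $p_k$ is the degree-$k$ residual polynomial normalized by $p_k(0) = 1$ (because $\vec x_0 = 0$ forces $\vec r_0 = \vec b$ and the Krylov/minimization structure gives this normalization). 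On the other hand $\vec q_k = \pi_k(W)\vec b$ where $\pi_k$ is the degree-$k$ orthonormal polynomial for $\mu$. Comparing, $c_k = p_k$-leading-behavior vs.\ $\pi_k(0)$: precisely $|c_k| = 1/|\pi_k(0)|$, since $p_k = \pi_k / \pi_k(0)$ is the unique degree-$k$ polynomial in the span of $\pi_0,\ldots,\pi_k$ that equals $\pi_k/\pi_k(0)$ and is hence $\mu$-orthogonal to all lower-degree polynomials while taking value $1$ at $0$. Thus $\|\vec r_k\|_2 = 1/|\pi_k(0)|$. Now I would invoke the relation between the Cholesky factor of the Jacobi matrix and the value of the orthonormal polynomials at $0$: a standard computation (Section~\ref{sec:OPs} presumably records this) shows that if $T = HH^T$ with $H$ lower bidiagonal having diagonal $\alpha_0,\ldots,\alpha_{n-1}$ and subdiagonal $\beta_0,\ldots,\beta_{n-2}$, then $\pi_k(0) = \pm \prod_{j=0}^{k-1}(\alpha_j/\beta_j) \cdot (\text{something}),$ and chasing the normalization $\pi_0 \equiv 1$ yields exactly $|\pi_k(0)| = \prod_{j=0}^{k-1} \alpha_j/\beta_j$. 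I expect the cleanest route is to use the three-term recurrence for $\pi_k$ evaluated at $x=0$ together with the fact that $HH^T$ has $(j,j)$ entry $\alpha_j^2 + \beta_{j-1}^2$ and $(j,j+1)$ entry $\alpha_j\beta_j$, matching $\alpha_j$ of $T$'s diagonal and $-\beta_j$ of its off-diagonal up to signs; plugging into the recurrence for $\pi_k(0)$ telescopes to the product. This gives $\|\vec r_k\|_2 = \prod_{j=0}^{k-1} \beta_j/\alpha_j$.

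\textbf{Part (a), error formula.} For $\|\vec e_k\|_W$, use $\|\vec e_k\|_W^2 = \vec e_k^* W \vec e_k = \vec r_k^* W^{-1}\vec r_k$ (since $\vec e_k = W^{-1}\vec r_k$) $= c_k^2\, \vec q_k^* W^{-1}\vec q_k = \|\vec r_k\|_2^2 \cdot \vec q_k^* W^{-1}\vec q_k$. The remaining factor $\vec q_k^* W^{-1}\vec q_k$ must be identified with $\vec f_1^*(L_k L_k^T)^{-1}\vec f_1$ where $L_k = H_{k+1:n,\,k+1:n}$. The idea is that $W^{-1}$ restricted to the Krylov space $\mathcal K_n$ acts as $T^{-1}$ in the $\vec q$-basis (this holds because once the iteration terminates at step $n$, $\mathcal K_n$ is $W$-invariant and $W|_{\mathcal K_n}$ is represented by $T$), so $\vec q_k^* W^{-1}\vec q_k = \vec f_{k+1}^* T^{-1}\vec f_{k+1}$. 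Using the Cholesky factorization $T = HH^T$ and the block/bidiagonal structure, a Schur-complement or direct back-substitution argument shows that the $(k+1,k+1)$ entry of $T^{-1}$ equals $\vec f_1^*(L_k L_k^T)^{-1}\vec f_1$ with $L_k$ the trailing principal submatrix of $H$ of size $n-k$; this is essentially the statement that inverting $HH^T = \begin{bmatrix} * & * \\ * & L_k L_k^T \end{bmatrix}$-type decomposition isolates the bottom-right block's contribution appropriately, which I would verify by writing $T^{-1} = H^{-T}H^{-1}$ and noting $H^{-1}\vec f_{k+1}$ depends only on the trailing block of $H$.

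\textbf{Part (b), MINRES residual.} Here $\vec r_k^{\mathrm{MR}} = \tilde p_k(W)\vec b$ with $\tilde p_k(0)=1$ minimizing $\|\tilde p_k(W)\vec b\|_2 = \|\tilde p_k\|_{L^2(\mu)}$ over such polynomials. Writing $\tilde p_k = \sum_{j=0}^k a_j \pi_j$ with constraint $\sum a_j \pi_j(0) = 1$, the minimal $L^2$-norm-squared is $(\sum_{j=0}^k \pi_j(0)^2)^{-1}$ by Lagrange multipliers / Cauchy--Schwarz. So $\|\vec r_k^{\mathrm{MR}}\|_2 = (\sum_{j=0}^k \pi_j(0)^2)^{-1/2}$, and substituting $\pi_j(0)^2 = \prod_{\ell=0}^{j-1}(\alpha_\ell/\beta_\ell)^2$ (from part (a), with the $j=0$ term equal to $1$) gives the claimed formula. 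Finally, $\vec r_n = 0$ because at termination $\vec x_n$ solves $W\vec x = \vec b$ exactly: $\mathcal K_n$ contains $\vec x = W^{-1}\vec b$ since $\mathcal K_n$ is $W$-invariant and contains $\vec b$.

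\textbf{Main obstacle.} The genuinely fiddly part is the bookkeeping of signs and indices in the Cholesky-to-orthogonal-polynomial dictionary, i.e., verifying $|\pi_k(0)| = \prod_{j=0}^{k-1}\alpha_j/\beta_j$ with the correct normalization, and the analogous trailing-block identity $\vec f_{k+1}^* T^{-1}\vec f_{k+1} = \vec f_1^*(L_k L_k^T)^{-1}\vec f_1$. Both are purely linear-algebraic and should follow from the three-term recurrence and structure of bidiagonal Cholesky factors, but getting every index offset right (the shift between $k$-indexed polynomials and $1$-indexed matrix entries, and between $T$ of size $n$ and $H$ of size $n$) requires care. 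Everything else — the proportionality $\vec r_k \propto \vec q_k$, the translation of the CG and MINRES minimization problems into polynomial $L^2(\mu)$ problems, and the Lagrange-multiplier computation for MINRES — is standard and I would cite the orthogonal-polynomial facts from Section~\ref{sec:OPs} and the Lanczos setup from Section~\ref{sec:cg}.
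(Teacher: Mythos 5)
Your proof is correct and, for the two residual formulas, follows essentially the same path as the paper: reduce each norm to an orthogonal-polynomial quantity in $L^2(\mu_T)$ (the paper's Propositions~\ref{prop:errors} and \ref{prop:MINRES}), then evaluate at $0$ using the Cholesky structure of the Jacobi matrix. The ``fiddly'' fact you flag, that the orthonormal polynomial satisfies $|p_k(0;\mu_T)| = \prod_{j<k}\alpha_j/\beta_j$, is cleanest to get exactly the way the paper does it in Section~\ref{sec:MINRES-non}: the monic polynomial satisfies $\pi_k(0;\mu_T) = \det(-T_k) = (-1)^k\det(H_kH_k^T) = (-1)^k\prod_{j<k}\alpha_j^2$, and dividing by the leading-coefficient normalization $\prod_{j<k} b_j = \prod_{j<k}\alpha_j\beta_j$ gives the claim. (You suggest the three-term recurrence at $x=0$; that works as an inductive verification but does not literally telescope, since it is a second-order recurrence.)

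For the error formula $\|\vec e_k\|_W$, your route is genuinely different from the paper's and is cleaner. The paper (Section~\ref{sec:CG-non}) writes $\|\vec e_k\|_W^2 = c_k(0;\mu_T)/\pi_k(0;\mu_T)$, decomposes $c_k(0) = c_0(0)\pi_k(0) - \tilde\pi_k(0)$ via the complementary polynomials, computes $\tilde\pi_k(0)/\pi_k(0)$ explicitly, and expands $c_0(0) = \vec f_1^*T^{-1}\vec f_1$ by iterating the Cramer-rule identity \eqref{eq:Cramer} (Proposition~\ref{p:finite}); the leading terms cancel and what remains is $\Sigma_k^{-2}\prod\beta_j^2/\alpha_j^2$ where $\Sigma_k^{-2} = \vec f_1^*(L_kL_k^T)^{-1}\vec f_1$. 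You instead write $\|\vec e_k\|_W^2 = \vec r_k^*W^{-1}\vec r_k = \|\vec r_k\|_2^2\,\vec q_{k+1}^*W^{-1}\vec q_{k+1}$ (using $\vec r_k\propto\vec q_{k+1}$), identify $\vec q_{k+1}^*W^{-1}\vec q_{k+1}$ with the $(k+1,k+1)$ entry of $T^{-1}$ via $W$-invariance of $\mathcal K_n$ at termination, and then observe directly that $(T^{-1})_{k+1,k+1} = \|H^{-1}\vec f_{k+1}\|_2^2 = \vec f_1^*(L_kL_k^T)^{-1}\vec f_1$ because $H$ is lower bidiagonal, so forward substitution for $H\vec y = \vec f_{k+1}$ kills the first $k$ entries of $\vec y$ and localizes to the trailing block $L_k$. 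This avoids the Stieltjes transform and complementary-polynomial bookkeeping entirely. The trade-off is that the paper's more computational route is set up to be reused directly for the distributional statement in Theorem~\ref{t:GaussianOnly}(a) (via Proposition~\ref{p:Muir}), so the two theorems are proved in the same pass; your argument proves the deterministic identity more transparently but one would still need the paper's machinery for the Gaussian law.

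The MINRES part and the observation $\vec r_n = 0$ are both correct and match the paper's reasoning.
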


\begin{theorem}[CG and MINRES on $\mathcal W_{\beta}(N,M)$]\label{t:GaussianOnly}
Suppose $W \lawequals \mathcal W_{\beta}(N,M)$ with $N \leq M$ and $\vec b \in \mathbb R^N$ ($\beta =1$) or $\vec b \in \mathbb C^{N}$ ($\beta = 2$) non-zero.  Let
$\alpha_j \lawequals \chi_{\beta(M-j)}$, $\beta_j \lawequals \chi_{\beta(N - j -1)}$, $j = 0,1,\ldots$ be independent and $k < N$.
\begin{enumerate}[(a)]
\item For the CGA applied to $W \vec x = \vec b$ with $\vec x_0 = 0$,
\begin{align*}
    \|\vec r_k(W,\vec b)\|_2 &\lawequals \|\vec b\|_2\prod_{j=0}^{k-1} \frac{\beta_j}{\alpha_j},\\
    \|\vec e_k(W,\vec b)\|_W &\lawequals \Sigma_k^{-1} \|\vec r_k\|_2, \quad \Sigma_k^{-1} \lawequals \frac{\sqrt{\beta M}}{\chi_{\beta(M-N + 1)}},
\end{align*}
where $\Sigma_k^{-1}$ is independent of $\alpha_j,\beta_j$, $j = 0,1,\ldots,k-1$ but dependent on $\alpha_j,\beta_j$, $j \geq k$.
\item For the MINRES algorithm applied to\footnote{We use the convention that $\prod_{\ell=0}^{-1} \equiv 1$.} $W \vec x = \vec b$,
\begin{align*}
      \|\vec r_k(W,\vec b)\|_2\lawequals \left(\displaystyle\sum_{j=0}^k \prod_{\ell = 0}^{j-1} \frac{\alpha_\ell^2}{\beta_\ell^2} \right)^{-1/2} \|\vec b\|_2.
    \end{align*}
\item   Now suppose $\vec b \in \mathbb R^M$ ($\beta =1$) or $\vec b \in \mathbb C^{M}$ ($\beta = 2$) is non-zero, and $X \lawequals \mathcal G_{\beta}(N,M)$, $N \leq M$.  For the CGA applied to $W \vec x = \frac{X}{\sqrt{M}}\vec b$, $W = \frac{XX^*}{M}$,
\begin{align*}
    \left\|\vec e_k\left( W, \frac{X}{\sqrt{M}}\vec b \right)\right\|_W &\lawequals \Delta_{N,M} \left(\displaystyle\sum_{j=0}^k \prod_{\ell = 0}^{j-1} \frac{\alpha_\ell^2}{\beta_\ell^2} \right)^{-1/2} \|\vec b\|_2,
\end{align*}
where $\Delta_{N,M} \lawequals \frac{\chi_{\beta N}^2}{\chi_{\beta M}^2}$ may have non-trivial correlations with $\alpha_j,\beta_j$, $j =0,1,2,\ldots$ but does not depend on $k$.
\end{enumerate}
\end{theorem}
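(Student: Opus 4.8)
The plan is to derive every assertion from a single ingredient --- the joint law of the Cholesky factors $(\alpha_j,\beta_j)$ of the Lanczos matrix $T=T(W,\vec b)$ --- and then substitute it into the deterministic identities of Theorem~\ref{t:deterministic}. As a preliminary reduction, orthogonal (resp.\ unitary) invariance of the Wishart law ($OWO^*\lawequals W$ for any fixed $O$, since $W=Q\Lambda Q^*$ with $Q$ Haar and independent of $\Lambda$) implies $T(W,\vec b)\lawequals T(W,\|\vec b\|_2\vec f_1)$, because the Lanczos matrix depends on $(W,\vec b)$ only up to simultaneous unitary conjugation; normalizing the starting vector inside the iteration extracts the scalar $\|\vec b\|_2$, so we may take $\vec b=\vec f_1$. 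The linchpin is the identity $H=B$: if $X/\sqrt M=UBV^*$ is a Golub--Kahan bidiagonalization with lower bidiagonal $B$ (positive diagonal and subdiagonal) and left starting vector $\vec f_1$ (so $U\vec f_1=\vec f_1$), then $W=XX^*/M=U(BB^\top)U^*$ with $BB^\top$ symmetric tridiagonal with positive subdiagonal and $U\vec f_1=\vec f_1$, so by uniqueness of the Lanczos decomposition $T=BB^\top$ and $U$ is the Lanczos basis, and since $B$ is lower triangular, $T=BB^\top$ is the (unique) Cholesky factorization, i.e.\ $H=B$; one also checks that for $N\le M$ and $\vec b=\vec f_1$ there is no breakdown before step $N$ almost surely, so $n=N$. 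Now the Householder/bidiagonal model for Gaussian matrices recalled in Section~\ref{sec:SCMs} says that bidiagonalizing $X\lawequals\mathcal G_\beta(N,M)$ yields a bidiagonal factor with mutually independent entries, diagonal $\chi_{\beta(M-j)}$ and subdiagonal $\chi_{\beta(N-j-1)}$. Rescaling by $1/\sqrt M$ identifies the entries of $H$; because only the ratios $\beta_j/\alpha_j$ (CG) and $\alpha_\ell^2/\beta_\ell^2$ (MINRES) appear in Theorem~\ref{t:deterministic}, the $1/\sqrt M$ cancels and parts (a) (residual) and (b) follow.

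For the CG error in (a), Theorem~\ref{t:deterministic}(a) gives $\|\vec e_k\|_W=\|\vec r_k\|_2\sqrt{\vec f_1^*(L_kL_k^\top)^{-1}\vec f_1}$ with $L_k=H_{k+1:N,k+1:N}=B_{k+1:N,k+1:N}$, the trailing block of $H$. Independence of $\Sigma_k^{-1}$ from $\alpha_0,\beta_0,\dots,\alpha_{k-1},\beta_{k-1}$, and its dependence on the later entries, is immediate, since $L_k$ is assembled from exactly the bidiagonal entries with index $\ge k$. For the law, note that $\sqrt M\,L_k$ has diagonal entries $\chi_{\beta(M-j)}$ and subdiagonal entries $\chi_{\beta(N-j-1)}$ for indices $j=k,\dots$, i.e.\ it is precisely the bidiagonal model of $\mathcal G_\beta(N-k,M-k)$; running the identity of the previous paragraph in reverse, $M\,L_kL_k^\top\lawequals\tilde U^*\tilde X\tilde X^*\tilde U$ with $\tilde X\lawequals\mathcal G_\beta(N-k,M-k)$ and $\tilde U\vec f_1=\vec f_1$, so $\vec f_1^*(L_kL_k^\top)^{-1}\vec f_1=M\,[(\tilde X\tilde X^*)^{-1}]_{11}$. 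The top-left entry of the inverse of a sample covariance matrix of $N-k$ coordinates over $M-k$ samples equals $1/(\tilde x_1^*P^{\perp}\tilde x_1)$, where $\tilde x_1$ is its first row and $P^{\perp}$ projects onto the orthogonal complement (of dimension $M-N+1$) of the span of the remaining rows; conditioning on those rows and using rotational invariance of the Gaussian row $\tilde x_1$ gives $\tilde x_1^*P^{\perp}\tilde x_1\lawequals\chi^2_{\beta(M-N+1)}$ up to the usual normalization. Combining with the factor $M$ yields the stated $\Sigma_k^{-1}$.

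Part (c) is obtained by recognizing the normal-equations iteration as a minimum-residual iteration on a restricted operator. Writing $A=X/\sqrt M$, the variational form \eqref{eq:variational} and $W^jA=A(A^*A)^j$ give $\mathcal K_k(W,A\vec b)=A\,\mathcal K_k(A^*A,\vec b)$, so for $\vec y$ in this subspace, with $P:=A^*(AA^*)^{-1}A$ the orthogonal projection onto the row space of $X$,
\[
\|\vec x-\vec y\|_W^2=\|A^*(\vec x-\vec y)\|_2^2=\|P\vec b-A^*\vec y\|_2^2 .
\]
As $\vec y$ ranges over $A\,\mathcal K_k(A^*A,\vec b)$, the vector $A^*\vec y$ ranges over $\hat W\,\mathcal K_k(\hat W,P\vec b)$, where $\hat W$ is $A^*A$ restricted to the ($N$-dimensional) row space of $X$ and $P\vec b$ lies in that space; hence $\|\vec e_k(W,A\vec b)\|_W=\|\vec r_k^{\mathrm{MINRES}}(\hat W,P\vec b)\|_2$. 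Applying (b) to this reduced SPD system produces the factor $\|P\vec b\|_2$ times the MINRES product over the Cholesky factors of $T(\hat W,P\vec b/\|P\vec b\|_2)$. Using the SVD $X=U\Sigma\tilde V^*$ (with $U$, $\Sigma$, $\tilde V$ independent in the Gaussian case), $P=\tilde V\tilde V^*$ and $\hat W=\tilde V(\Sigma^2/M)\tilde V^*$, so $\hat W$ has eigenvalues distributed as those of a fresh $\mathcal W_\beta(N,M)$, while the starting direction $P\vec b/\|P\vec b\|_2$ is rotation invariant (in the $\tilde V$-coordinates it is the first $N$ coordinates of a Haar-rotated unit vector, normalized) and independent of $\Sigma$; thus those Cholesky factors are again distributed as $\alpha_\ell,\beta_\ell$. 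Meanwhile $\|P\vec b\|_2^2=\|\tilde V^*\vec b\|_2^2\lawequals\|\vec b\|_2^2\,\chi^2_{\beta N}/\chi^2_{\beta M}$ is the squared length of the projection of $\vec b$ onto a uniformly random $N$-plane, which is the factor $\Delta_{N,M}$ and does not depend on $k$.

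The step I expect to be most delicate is part (c): seeing the normal-equations minimization as the MINRES minimization for the restricted operator $\hat W$, and then using independence of the SVD factors to pin down both the law of the Cholesky factors of $T(\hat W,P\vec b)$ and the correlation structure of the scalar $\Delta_{N,M}$. Within (a)--(b) the points needing care are the almost-sure no-breakdown statement ($n=N$) and the uniqueness argument for $H=B$; tracking the $\beta$-dependent normalizations of the $\chi$ distributions in the complex case is bookkeeping.
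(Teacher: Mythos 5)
Your plan is correct, and it follows the same backbone as the paper --- Golub--Kahan bidiagonalization giving the law of the Cholesky factor $H$, then substitution into the deterministic formulas --- but two of the sub-arguments are genuinely different in flavor, and in both cases your route is more linear-algebraic where the paper's is orthogonal-polynomial. For the CG error in (a), the paper goes through the Stieltjes transform $c_0(0;\mu_T)=\vec f_1^*T^{-1}\vec f_1$, iterates the Cramer/Schur identity \eqref{eq:Cramer} to isolate the trailing block (Proposition~\ref{p:finite}), and quotes the Muirhead identity (Proposition~\ref{p:Muir}) for the remainder; you instead go straight to the formula $\|\vec e_k\|_W=\|\vec r_k\|_2\sqrt{\vec f_1^*(L_kL_k^\top)^{-1}\vec f_1}$ of Theorem~\ref{t:deterministic}(a), recognize $\sqrt{\beta M}\,L_k$ as the Golub--Kahan bidiagonal of a fresh $\mathcal G_\beta(N-k,M-k)$ (which is what gives the claimed independence structure), and evaluate the $(1,1)$-entry of the inverse by a one-step Schur complement together with rotational invariance of the first Gaussian row --- mathematically the same computation as Propositions~\ref{p:Muir} and \ref{p:finite}, but without invoking the Jacobi-matrix/OP machinery. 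For (c), the paper writes $\|\vec e_k\|_W^2=1/\sum p_j^2(0;\nu)$ via Proposition~\ref{prop:CGANE} and then rescales the (non-probability) measure $\nu$ through \eqref{eq:op_obs}; your geometric reduction $\|\vec e_k(W,A\vec b)\|_W=\|\vec r_k^{\mathrm{MINRES}}(\hat W|_{\mathrm{row}},P\vec b)\|_2$, followed by an application of part (b) and independence of the SVD factors, is a cleaner and more conceptual route to the same identity. One small bookkeeping remark: as you note, your derivation gives the prefactor $\|P\vec b\|_2=\|\vec b\|_2\sqrt{\|P\vec b\|_2^2/\|\vec b\|_2^2}$, i.e.\ the square root of the quantity you call $\Delta_{N,M}$; that matches the paper's own proof in Section~\ref{sec:CG-ne} (so the exponent on $\Delta_{N,M}$ in the displayed formula of Theorem~\ref{t:GaussianOnly}(c) appears to be a typo, and your computation resolves it correctly). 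You also correctly flag the delicate point in (c) --- that $P\vec b/\|P\vec b\|_2$ is not independent of $\hat W$ --- and handle it the same way the paper does, by passing to the $\tilde V$-coordinates and using independence of $\Sigma$ from $V$ together with Haar-uniformity of $\tilde V^*\vec b/\|\tilde V^*\vec b\|_2$.
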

\begin{remark}
From Theorem~\ref{t:GaussianOnly}(c) we obtain a complete parameterization of the relative errors
\begin{align*}
    \frac{\left\|\vec e_k\left( W, \frac{X}{\sqrt{M}}\vec b \right)\right\|_W}{\left\|\vec e_0\left( W, \frac{X}{\sqrt{M}}\vec b \right)\right\|_W} \lawequals\left(\displaystyle\sum_{j=0}^k \prod_{\ell = 0}^{j-1} \frac{\alpha_\ell^2}{\beta_\ell^2} \right)^{-1/2}.
\end{align*}
\end{remark}

To state the next couple results, we define the parameter $\mfd = N/M$.

\begin{theorem}[Universality to leading order]\label{t:main-lo}
Let $W = XX^*$ where $X$ is an $N \times M$ random matrix $N \leq M$, $\mfd \To[M] d \in (0,1]$, with independent real ($\beta = 1$) or complex ($\beta = 2$) entries.  Suppose, in addition, that there exists constants $\{C_p\}_{1}^\infty$ so that \emph{all} entries of $X$ satisfy, for non-negative integers $\ell,p$,
\begin{equation}\label{eq:mm_lo}
\begin{aligned}
&  \Exp (\Re X_{ij})^\ell(\Im X_{ij})^p = \Exp (\Re Y)^\ell(\Im Y)^p, \\
& Y \lawequals \mathcal N_\beta(0,1/M), \quad \ell + p \leq 2,\\
& \Exp |\sqrt{M} X_{ij}|^p \leq C_p, \quad \text{ for all } p \in \N.
\end{aligned}
\end{equation}
For any sequence $\vec b = \vec b_N$ of unit vectors, in the sense of convergence of finite-dimensional marginals:
\begin{enumerate}[(a)]
\item For the CGA\footnote{We do not discuss $\|\vec r_0\|_2$ here because $\vec r_0 = \vec b$.}
\begin{align*}
    \left( \|\vec e_k(W,\vec b)\|_W^2 \right)_{k \geq 0} \Wkto[M] \left( \frac{d^k}{1-d} \right)_{k \geq 0}, ~~ d \neq 1, \quad \left( \|\vec r_k(W,\vec b)\|_2^2 \right)_{k \geq 1} \Wkto[M] \left( {d^k} \right)_{k \geq 1}.
\end{align*}
\item For the MINRES algorithm
\begin{align*}
\left( \|\vec r_k(W,\vec b)\|_2^2 \right)_{k \geq 1} \Wkto[M] \left( d^k\frac{1-d}{1-d^{k+1}} \right)_{k \geq 1}.
\end{align*}
\item For the CGA applied to the normal equations
\begin{align*}
 \left( \left\|\vec e_k\left(W,\frac{X}{\sqrt{M}}\vec b\right)  \right\|_W^2 \right)_{k \geq 0} \Wkto[M] \left( d^{k+1} \frac{1-d}{1-d^{k+1}} \right)_{k \geq 0}.
\end{align*}
\end{enumerate}
\end{theorem}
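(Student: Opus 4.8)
The plan is to combine the deterministic identities of Theorem~\ref{t:deterministic} with a single analytic input: the convergence of the moments of the spectral measure
\[
  \mu_{\vec b} = \sum_{i=1}^N |\langle \vec b,\vec u_i\rangle|^2\,\delta_{\lambda_i(W)},\qquad W\vec u_i = \lambda_i(W)\vec u_i,\ \ \|\vec u_i\|_2 = 1,
\]
to the corresponding moments of the Marchenko--Pastur law $\mu_{MP}$ with parameter $d$, whose support is $[(1-\sqrt d)^2,(1+\sqrt d)^2]\subset(0,\infty)$ for $d<1$. Because the limits in the statement are deterministic, convergence of finite--dimensional marginals is equivalent to coordinatewise convergence in probability, so I would fix $k$ and handle one coordinate at a time, and then identify the limiting constant by evaluating the relevant functional on $\mu_{MP}$. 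In the Gaussian case this programme is already carried out implicitly by Theorem~\ref{t:GaussianOnly} together with the law of large numbers for the chi variables ($\alpha_j^2/(\beta M)\to 1$, $\beta_j^2/(\beta M)\to d$, and the auxiliary scalars $\Sigma_k^{-1},\Delta_{N,M}$ concentrating), which I would use as a consistency check; the work is in removing the Gaussian assumption.

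\emph{Reduction to moments.} Write $m_\ell = \vec b^*W^\ell\vec b = \int x^\ell\,d\mu_{\vec b}(x)$. The moments $m_0,\dots,m_{2n-1}$ determine the Lanczos matrix $T=T(W,\vec b)$ through the Stieltjes (three--term recurrence) procedure, and its entries --- hence, via Algorithm~\ref{a:chol}, the Cholesky data $\alpha_0,\dots,\alpha_k$ and $\beta_0,\dots,\beta_{k-1}$ --- are rational functions of $m_0,\dots,m_{2k+1}$ that are smooth in a neighbourhood of the moment sequence of $\mu_{MP}$ (the relevant Hankel determinants are bounded below there because $\mu_{MP}$ has infinitely many points of increase). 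Thus the CGA and MINRES residual norms in (a)--(b) are fixed continuous functions of $(m_0,\dots,m_{2k+1})$ near the limit. For the CGA error norm I would instead exploit the Galerkin orthogonality of the CGA: $\|\vec e_k(W,\vec b)\|_W^2 = \vec b^*W^{-1}\vec b - \|\vec x_k\|_W^2$, where $\vec x_k=q_{k-1}(W)\vec b$ and $q_{k-1}$ is the polynomial of degree $\le k-1$ minimising $\int (x^{-1}-q(x))^2\,x\,d\mu_{\vec b}(x)$. Solving the associated (Hankel) normal equations writes $\|\vec e_k\|_W^2$ as a continuous function of $m_{-1},m_0,\dots,m_{2k-1}$, which conveniently avoids dealing with the block $L_k=H_{k+1:n,k+1:n}$ of Theorem~\ref{t:deterministic}(a), whose size grows with $M$ (equivalently, $\vec f_1^*(L_kL_k^T)^{-1}\vec f_1$ is the negative first moment of the measure whose Jacobi matrix is $L_kL_k^T$, converging to $\int x^{-1}\,d\mu_{MP}$). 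Part (c) I would treat in the same spirit after passing to the Golub--Kahan bidiagonalization of $X/\sqrt M$ against $\vec b$: the bidiagonal entries play the role of $\alpha_j,\beta_j$ and depend continuously on the moments of the associated spectral measure of $X^*X/M$, while the remaining scalar prefactor ($\Delta_{N,M}$ in the Gaussian formula, a quadratic form of $\vec b$ against a rank-$N$ projection in general) concentrates and is asymptotically decoupled from the bidiagonal data.

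\emph{Convergence of the moments.} This is the only genuinely probabilistic step and the only place the hypotheses \eqref{eq:mm_lo} are used. By the anisotropic local law for sample covariance matrices with matching moments \cite{Knowles2017}, $\vec b^*(W-z)^{-1}\vec b \Prto[M] m_{MP}(z)$ uniformly for $z$ in compact subsets of $\mathbb{C}\setminus[(1-\sqrt d)^2,(1+\sqrt d)^2]$; expanding at $z=\infty$ yields $m_\ell\Prto[M]\int x^\ell\,d\mu_{MP}$ for each fixed $\ell\ge0$, and --- using that $0$ lies strictly below the limiting spectrum when $d<1$, together with the accompanying rigidity bound on $\lambda_N(W)$ --- evaluating near $z=0$ yields $\vec b^*W^{-1}\vec b \Prto[M]\int x^{-1}\,d\mu_{MP}=\tfrac{1}{1-d}$. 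Combining with the reduction above gives $\alpha_j\Prto[M]1$ and $\beta_j\Prto[M]\sqrt d$ for every $j$; these are precisely the Cholesky factors of the Jacobi matrix of $\mu_{MP}$, whose parameters $b_0=1$, $b_n=1+d$, $a_n^2=d$ ($n\ge1$) come, e.g., from the $R$--transform of the free Poisson distribution. Substituting into Theorem~\ref{t:deterministic} then produces $\|\vec r_k\|_2^2\to d^k$ and $\|\vec e_k\|_W^2\to d^k/(1-d)$ for the CGA, $\|\vec r_k\|_2^2\to\bigl(\sum_{j=0}^k d^{-j}\bigr)^{-1}=d^k(1-d)/(1-d^{k+1})$ for MINRES, and, once the prefactor in (c) is accounted for, $\|\vec e_k(W,\tfrac{X}{\sqrt M}\vec b)\|_W^2\to d^{k+1}(1-d)/(1-d^{k+1})$.

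The step I expect to be the main obstacle is the error norm, specifically extracting the negative moment $\vec b^*W^{-1}\vec b$ (and, if one keeps the determinantal form, controlling $(L_kL_k^T)^{-1}$) from the local law: this needs a hard--edge estimate ensuring $\lambda_N(W)$ stays bounded away from $0$, so that $x\mapsto x^{-1}$ is a bounded continuous test function on the spectrum. This is exactly the $d<1$ regime, and its failure at $d=1$ is why the CGA error statement in (a) is restricted to $d\neq1$. The remaining ingredients are routine: positive definiteness of the truncated Hankel forms (continuity of the moments $\mapsto(\alpha_j,\beta_j)$ map at the $\mu_{MP}$ sequence), a uniform-in-$M$ tail bound on $L_kL_k^T$ if one uses Theorem~\ref{t:deterministic}(a) directly, and the concentration and asymptotic independence of the scalar prefactor in (c). I would emphasise that only this first--order concentration of moments is needed here; the sharper four--moment comparison developed in Section~\ref{sec:univ} enters only for the fluctuation result, Theorem~\ref{t:main}.
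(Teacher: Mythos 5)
Your proposal is correct and follows essentially the same route as the paper: reduce the CGA/MINRES quantities to continuous (rational) functions of the positive and, for the error norm, negative first moments of the weighted spectral measure, then establish in-probability convergence of those moments to the Marchenko--Pastur values via the anisotropic local law (the content of Proposition~\ref{prop:univ-lo}), identifying the limits from the Gaussian formulae of Theorem~\ref{t:GaussianOnly}. Your Galerkin-orthogonality expression $\|\vec e_k\|_W^2 = \vec b^*W^{-1}\vec b - \|\vec x_k\|_W^2$ is algebraically equivalent to the paper's representation $\|\vec e_k\|_W^2 = c_k(0;\mu_T)/\pi_k(0;\mu_T)$ together with the tail recursion of Proposition~\ref{p:finite}, so this is a cosmetic rather than structural difference.
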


The case $d = 1$ in Theorem~\ref{t:main-lo} is treated by continuity, $d \uparrow 1$.  To state our last limit theorem, we must define the limit processes.  Let $\mathcal G = (Z_k)_{k=1}^\infty$ be a process of independent $\mathcal N_1(0,1)$ random variables.  Define three new processes $\mathcal G^{\vec e}= (Z_k^{\vec e})_{k=0}^\infty$, $\mathcal G^{\vec r,\mathrm{CG}}= (Z_j^{\vec r,\mathrm{CG}})_{j=1}^\infty$ and $\mathcal G^{\vec r,\mathrm{MINRES}}= (Z_j^{\vec r,\mathrm{MINRES}})_{j=1}^\infty$ via
\begin{align*}
    Z_k^{\vec e} &= \frac{d^k}{1-d} \left[\sum_{j=k}^\infty d^{j-k} (Z_{2j}/\sqrt{d} - Z_{2j+1}) + \sum_{j=1}^{k-1} (Z_{2j}/\sqrt{d} - Z_{2j-1}) - Z_{2k-1} \right],  \\
     Z_k^{\vec r,\mathrm{CG}} &=  {d^k} \left[\sum_{j=0}^{k-1} \left(Z_{2j+2}/\sqrt{d} -  Z_{2j+1} \right)\right],\quad k > 0, \quad Z_0^{\vec r,\mathrm{CG}} = 0,\\
     Z_k^{\vec r,\mathrm{MINRES}} &=  \left(\frac{1-d}{1 - d^{k+1}} \right)^2 \sum_{j=0}^k d^{2(k-j)} Z_j^{\vec r,\mathrm{CG}}.
\end{align*}

\begin{theorem}[Universality of the fluctuations]\label{t:main}
Let $W = XX^*$ where $X$ is an $N \times M$ random matrix, $N \leq M$, $\mfd \To[M] d \in (0,1]$ with independent real ($\beta = 1$) or complex ($\beta = 2$) entries.  Suppose, in addition, that there exists constants $\{C_p\}_{1}^\infty$ so that \emph{all} entries of $X$ satisfy, for non-negative integers $\ell,p$,
\begin{equation}\label{eq:thm_mm}
\begin{aligned}
&  \Exp (\Re X_{ij})^\ell(\Im X_{ij})^p = \Exp (\Re Y)^\ell(\Im Y)^p, \\
& Y \lawequals \mathcal N_\beta(0,1/M), \quad \ell + p \leq 4,\\
& \Exp |\sqrt{M} X_{ij}|^p \leq C_p, \quad \text{ for all } p \in \N.
\end{aligned}
\end{equation}
For any sequence $\vec b = \vec b_N$ of unit vectors, in the sense of convergence of finite-dimensional marginals:
\begin{enumerate}[(a)]
\item For the CGA
\begin{align*}
    \sqrt{\frac{\beta M}{2}} \left(\|\vec e_k(W,\vec b)\|_W^2 -\frac{\mfd^k}{1-\mfd}\right)_{k \geq 0} \Wkto[M] \mathcal G^{\vec e}, \quad d \neq 1,\\
    \sqrt{\frac{\beta M}{2}} \left(\|\vec r_k(W,\vec b)\|_2^2 -\mfd^k\right)_{k \geq 1} \Wkto[M] \mathcal G^{\vec r,\mathrm{CG}}.
\end{align*}
\item For the MINRES algorithm (the case $\mathfrak d = 1$ obtained using continuity)
\begin{align*}
    \sqrt{\frac{\beta M}{2}} \left(\|\vec r_k(W,\vec b)\|_2^2 -\mfd^k \frac{1-\mfd}{1-\mfd^{k+1}}\right)_{k \geq 1} \Wkto[M] \mathcal G^{\vec r,\mathrm{MINRES}}.
\end{align*}
\item For the CGA applied to the normal equations (the case $\mathfrak d = 1$ obtained using continuity)
\begin{align*}
    \sqrt{\frac{\beta M}{2}} \left( \frac{\left\|\vec e_k\left(W,\frac{X}{\sqrt{M}}\vec b\right)\right\|_W^2}{\left\|\vec e_0\left(W,\frac{X}{\sqrt{M}}\vec b\right)\right\|_W^2} -  \mfd^k \frac{1-\mfd}{1-\mfd^{k+1}}\right)_{k \geq 1} \Wkto[M] \mathcal G^{\vec r,\mathrm{MINRES}}.
\end{align*}
\end{enumerate}
\end{theorem}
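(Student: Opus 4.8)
The strategy is to reduce everything to a single universality input—a four moment theorem for the spectral measure of $(W,\vec b)$—and then deduce the stated CLTs by combining (i) the exact distributional formulae for the Gaussian case in Theorem~\ref{t:GaussianOnly}, (ii) a classical CLT computation for the Gaussian model, and (iii) the fact that the quantities $\|\vec r_k\|_2^2$, $\|\vec e_k\|_W^2$ are smooth functions of finitely many moments of the spectral measure. First I would recall that the Lanczos matrix $T(W,\vec b)$, its Cholesky factor $H$, and hence all the entries $\alpha_j,\beta_j$ appearing in Theorem~\ref{t:deterministic} are (analytic) functions of the moments $m_r = \vec b^* W^r \vec b$ for $r \le 2k+1$; this is the standard Hankel/Jacobi-matrix-from-moments correspondence and is presumably set up in Section~\ref{sec:OPs}. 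Therefore each of the target statistics in Theorem~\ref{t:main} is a fixed smooth function $\Phi_k$ of the moment vector $(m_1,\dots,m_{2k+1})$, the same function for all entry distributions.

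Next I would invoke the four moment theorem proved in Section~\ref{sec:univ}: under the moment-matching hypothesis \eqref{eq:thm_mm}, the joint law of $\sqrt{\beta M/2}\,(m_r - \E_{\mathrm{Gauss}} m_r)_{r \le R}$ (for any fixed $R$) converges to the same Gaussian limit it has in the Gaussian case, up to an error $o(1)$ that does not affect the limit; equivalently, the rescaled moment fluctuations are asymptotically universal. The delta method (Taylor expansion of $\Phi_k$ about the deterministic leading-order moment values, which are themselves computed via the anisotropic local law / Marchenko–Pastur moments) then transfers this to a joint CLT for $\sqrt{\beta M/2}(\Phi_k - \text{(leading order)})_{k}$ with a universal Gaussian limit. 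To identify that limit explicitly as $\mathcal G^{\vec e}$, $\mathcal G^{\vec r,\mathrm{CG}}$, $\mathcal G^{\vec r,\mathrm{MINRES}}$, I would do the computation in the Gaussian case only, using Theorem~\ref{t:GaussianOnly}: there $\|\vec r_k\|_2^2 = \prod_{j<k}\beta_j^2/\alpha_j^2$ with independent chi variables, so writing $\alpha_j^2 = \beta(M-j)(1+\text{fluct})$, $\beta_j^2 = \beta(N-j-1)(1+\text{fluct})$ and linearizing $\log$ gives $\sqrt{\beta M/2}(\|\vec r_k\|_2^2 - \mfd^k) \to d^k\sum_{j<k}(Z_{2j+2}/\sqrt d - Z_{2j+1})$ by the ordinary CLT for sums of independent centered $\chi^2$'s (variance of $\chi^2_{\beta m}/(\beta m)$ is $2/(\beta m)$, hence the $\sqrt{2/(\beta M)}$ normalization and the $1/\sqrt d$ factors from $N$ vs $M$). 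The MINRES and error formulae follow by applying the delta method to the explicit algebraic expressions $(\sum_j \prod_{\ell<j}\alpha_\ell^2/\beta_\ell^2)^{-1/2}$ and $\Sigma_k^{-1}\|\vec r_k\|_2$ in Theorem~\ref{t:GaussianOnly}, and for the normal-equations case (c) one checks that the extra factor $\Delta_{N,M} = \chi_{\beta N}^2/\chi_{\beta M}^2$ is $k$-independent and cancels in the ratio $\|\vec e_k\|_W^2/\|\vec e_0\|_W^2$, reducing (c) to the MINRES computation. For part (a)'s $\|\vec e_k\|_W^2$ statement one must handle the subtlety that $\Sigma_k^{-1}$ is independent of $\alpha_j,\beta_j$ for $j<k$ but correlated with those for $j\ge k$; expanding $\Sigma_k^{-2} = \beta M/\chi^2_{\beta(M-N+1)}$ and $\|\vec r_k\|_2^2$ and carefully tracking which $Z$'s appear yields the two-sided sum defining $Z_k^{\vec e}$ (the $\sum_{j\ge k}$ tail coming from $\Sigma_k$, the $\sum_{1\le j<k}$ part and the $-Z_{2k-1}$ from $\|\vec r_k\|_2$).

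The main obstacle is the four moment theorem itself—showing that $\sqrt{M}$-scale fluctuations of the spectral moments $\vec b^* W^r\vec b$ are insensitive to the distribution of the entries beyond the first four moments, and moreover that the Lanczos/Cholesky map is regular enough at the relevant (deterministic) moment point that the delta method applies with $o(M^{-1/2})$ errors rather than merely $o(1)$ errors. This requires an anisotropic-local-law-type estimate (the input from \cite{Knowles2017}) to control $\vec b^* W^r \vec b$ together with a Lindeberg swapping / Green-function-comparison argument to match four moments; ensuring the comparison error is genuinely $o(M^{-1/2})$ after multiplication by $\sqrt M$ is the delicate quantitative point. A secondary technical nuisance is promoting convergence of each fixed finite-dimensional marginal to the joint statement for the process $(\cdot)_k$, but since only convergence of finite-dimensional marginals is claimed, this is automatic once the four moment theorem is stated jointly for $(m_1,\dots,m_R)$ with $R$ arbitrary but fixed. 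Everything else—the identification of the leading-order constants with Marchenko–Pastur moments, and the explicit covariance computations—is routine once the formulae of Theorems~\ref{t:deterministic} and \ref{t:GaussianOnly} are in hand.
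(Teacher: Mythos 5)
Your outline tracks the paper's actual argument closely: do the explicit Gaussian computation from Theorem~\ref{t:GaussianOnly}, establish a four-moment universality statement for the spectral measure of $(W,\vec b)$, and transfer the CLT through the (locally differentiable) map from moments to the Jacobi/Cholesky data. The Gaussian linearization you sketch for $\|\vec r_k\|_2^2$ is correct and matches the paper's Section~\ref{sec:analysis}, and your observation that $\Delta_{N,M}$ cancels in the ratio for part~(c) is exactly the reduction the paper uses.

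However, there is a genuine gap in the universality-transfer step as you state it. You claim that each target statistic, and in particular $\|\vec e_k(W,\vec b)\|_W^2$, is a fixed smooth function of the \emph{positive} moments $(m_1,\dots,m_{2k+1})$. This is false for the error norm. By Proposition~\ref{prop:errors}, $\|\vec e_k\|_W^2 = c_k(0;\mu_T)/\pi_k(0;\mu_T)$, and $c_k(0;\mu_T)$ contains the term $c_0(0;\mu_T) = -\vec f_1^* T^{-1}\vec f_1$, i.e.\ the inverse moment $m_{-1}(\mu_T)$. Equivalently, in Theorem~\ref{t:deterministic}(a) the factor $\sqrt{\vec f_1^*(L_kL_k^T)^{-1}\vec f_1}$ involves the \emph{entire tail} $H_{k+1:n,k+1:n}$, hence all moments up to order $\sim 2n$, not just $2k+1$. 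In your parameterization this appears as $\Sigma_k^{-2}$, which Theorem~\ref{t:GaussianOnly}(a) explicitly notes is dependent on $\alpha_j,\beta_j$ for $j\ge k$. A four-moment theorem that controls only $\sqrt{M}(m_r - \E_{\mathrm{Gauss}}m_r)$ for finitely many nonnegative $r$ therefore does not automatically transfer the limit law of $\|\vec e_k\|_W^2$ (nor its joint law with $\|\vec r_k\|_2^2$). The paper circumvents this by formulating the comparison (Theorem~\ref{t:GC}) not for polynomial moments but for $\sqrt{M}\oint_{\Gamma_j} f_j(z)(c_0(z;\mu_T)-s_{\mfd}(z))\,\sd z$ with $f_j$ merely analytic on a neighborhood of $\Omega_j$, which permits $f(z)=1/z$ when $d<1$ (this is also why part~(a) for $\|\vec e_k\|_W$ excludes $d=1$, where $0$ sits at the spectral edge), and relies on eigenvalue rigidity (Proposition~\ref{p:rigidity}) so that the contour can separate the spectrum from the origin with overwhelming probability. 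To make your proposal rigorous you would need to extend your four-moment input to cover $m_{-1}$ jointly with the positive moments, and then establish the joint Gaussian limit of $(\Sigma_k^{-2},\alpha_0,\beta_0,\dots)$; the paper does the latter via the Cram\'er-rule tail decomposition in Proposition~\ref{p:finite}, which is the concrete mechanism behind what you describe as ``carefully tracking which $Z$'s appear,'' and which is not just bookkeeping --- without it the correlation between $\Sigma_k^{-2}$ and the tail of the Cholesky factor, which produces the $\sum_{j\ge k}$ piece of $Z_k^{\vec e}$, is left unjustified.
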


The proofs of the previous theorems can be roughly summarized as follows.  Modulo some technical issues in dealing with correlations, Theorem~\ref{t:GaussianOnly} can be directly used, with the asymptotics of independent chi random variables, to prove Theorem~\ref{t:main-lo} and \ref{t:main} in the case $\sqrt{M} X \lawequals \mathcal G_{\beta}(N,M)$.  Asymptotic correlations are addressed in Proposition~\ref{prop:dist-limit}.  Associated to $(W, \vec b)$, $W > 0, \|\vec b\|_2 = 1$ is a weighted empirical spectral measure (see \eqref{eq:weighted_mu} below).  The orthogonal polynomials with respect to this measure satisfy a three-term recurrence which when assembled into a Jacobi matrix coincides with the output $T_n(W, \vec b)$ of the Lanczos iteration (see Proposition~\ref{prop:lanczos_equiv} below).  Then the well-known fact that the entries in the three-term recurrence Jacobi matrix can be recovered as algebraic functions of the moments of the measure is used (see \eqref{eq:sandl}).  This means that the entries in the Cholesky factorization of $T_n(W \vec ,b)$ are (generically) differentiable functions of the moments of the weighted empirical spectral measure. Then Theorem~\ref{t:GC} establishes universality for the moments and hence for the entries in the Cholesky factorization.  More specifically, this implies that Proposition~\ref{prop:dist-limit} holds in the non-Gaussian case, implying our theorems.

Some important remarks are in order. 
\begin{remark}\label{rem:CG_halting}
Let $W$, $d<1$ and $\vec b$ be as in Theorem~\ref{t:main-lo}.  Define two CGA halting times
  \begin{align*}
      t^{\vec e} (W,\vec b,\epsilon) = \min \{k: \|\vec e_k(W,\vec b)\|_W < \epsilon\},\quad t^{\vec r} (W,\vec b,\epsilon) = \min \{k: \|\vec r_k(W,\vec b)\|_2 < \epsilon\}.
  \end{align*}
  If $\epsilon^2 \neq d^k/(1-d)$ for all $k$
  \begin{align*}
        \lim_{N \to \infty} \mathbb P \left( t^{\vec e} (W,\vec b,\epsilon) = \left\lceil \frac{\log \epsilon^2 (1-d)}{\log d} \right\rceil \right) = 1,
  \end{align*}
  and if $\epsilon^2 = d^k/(1-d)$ for some $k$ then
  \begin{align*}
        \lim_{N \to \infty} \mathbb P \left( t^{\vec e} (W,\vec b,\epsilon) = \left\lceil \frac{\log \epsilon^2 (1-d)}{\log d} \right\rceil \right) &= \frac 1 2,\\
        \lim_{N \to \infty} \mathbb P \left( t^{\vec e} (W,\vec b,\epsilon) = 1 + \left\lceil \frac{\log\epsilon^2 (1-d)}{\log d} \right\rceil\right) & = \frac 1 2.
  \end{align*}
  Similarly, if $\epsilon^2 \neq d^k$ for all $k$
  \begin{align*}
        \lim_{N \to \infty} \mathbb P \left( t^{\vec r} (W,\vec b,\epsilon) = \left\lceil \frac{2 \log \epsilon)}{\log d} \right\rceil \right) = 1,
  \end{align*}
  and if $\epsilon = d^k$ for some $k$ then
  \begin{align*}
        \lim_{N \to \infty} \mathbb P \left( t^{\vec r} (W,\vec b,\epsilon) = \left\lceil \frac{2 \log \epsilon}{\log d} \right\rceil \right) = \frac 1 2 = \lim_{N \to \infty} \mathbb P \left( t^{\vec r} (W,\vec b,\epsilon) = 1 + \left\lceil \frac{2 \log \epsilon}{\log d} \right\rceil\right).
  \end{align*}
  \end{remark}
  
  \begin{remark}\label{rem:MINRES_halting}
  Let $W$, $d< 1$ and $\vec b$ be as in Theorem~\ref{t:main-lo}.  Define the MINRES halting time
  \begin{align*}
      t^{\mathrm{MINRES}}(W,\vec b, \epsilon) = \min\{ k : \|\vec r_k(W,\vec b)\|_2 < \epsilon \}.
  \end{align*}
  Then if $\epsilon ^2 \neq d^k \frac{1 - d}{1- d^{k+1}}$ for all $k$
  \begin{align*}
        \lim_{N \to \infty} \mathbb P \left( t^{\mathrm{MINRES}} (W,\vec b,\epsilon) = \left\lceil \frac{\log \frac{\epsilon^2}{1-d+\epsilon^2 d}}{\log d} \right\rceil \right) = 1,
  \end{align*}
  and if $\epsilon ^2 = d^k \frac{1 - d}{1- d^{k+1}}$ for some $k$ then
  \begin{align*}
        \lim_{N \to \infty} \mathbb P \left( t^{\mathrm{MINRES}} (W,\vec b,\epsilon) = \left\lceil \frac{\log \frac{\epsilon^2}{1-d+\epsilon^2 d}}{\log d} \right\rceil \right) &= \frac 1 2,\\
        \lim_{N \to \infty} \mathbb P \left( t^{\mathrm{MINRES}} (W,\vec b,\epsilon) = 1 + \left\lceil \frac{\log \frac{\epsilon^2}{1-d+\epsilon^2 d}}{\log d} \right\rceil\right) & = \frac 1 2.
  \end{align*}
  And so, the MINRES algorithm, using the halting criterion $\|\vec r_k\|_2 < \epsilon$ will run for approximately $\frac{\log (1 - d + \epsilon^2 d)}{\log d}$ fewer steps than the CGA.
  \end{remark}

 \begin{remark}
 Let $W$, $d< 1$ and $\vec b$ be as in Theorem~\ref{t:main}. For fixed $k$
  \begin{align}
    \sqrt{\frac{\beta M}{2}} \left( \|\vec e_k(W,\vec b)\|_W^2 - \frac{d^k}{1-d} \right) &\Wkto[M] \mathcal N_1(0,\sigma_{k,\vec e}^2), \quad d \neq 1, \notag\\
    \sigma_{k,\vec e}^2 &= \frac{d^{2k}}{(1-d)^2} \left[\frac{1}{d(1-d)} + (k-1) \left( 1 + \frac{1}{d}\right) + 1\right],\notag\\
     \sqrt{\frac{\beta M}{2}} \left( \|\vec r_k(W,\vec b)\|_2^2 - d^k \right) &\Wkto[M] \mathcal N_1(0,\sigma_{k,\vec r}^2),\notag\\
     \sigma_{k,\vec r}^2 &= k d^{2k} \left(1 + \frac{1}{d}\right).\label{eq:res_var}
  \end{align}
  \end{remark}
  
  \begin{remark}
 The expression for $Z_k^{\vec r, \mathrm{MINRES}}$ can be written as 
 \begin{align*}
     Z_k^{\vec r, \mathrm{MINRES}}  = d^{2k} \left( \frac{1-d}{1-d^{k+1}} \right)^2 \sum_{\ell = 0}^k \frac{d^{-k} - d^{-\ell}}{1-d} \left( Z_{2 \ell + 2}/\sqrt{d} - Z_{2\ell +1} \right),\quad k >0.
 \end{align*}
Let $W$, $d< 1$ and $\vec b$ be as in Theorem~\ref{t:main}. For fixed $k$ it then follows that 
  \begin{align}
     \sqrt{\frac{\beta M}{2}} &\left( \|\vec r_k(W,\vec b)\|_2^2 - d^k \right) \Wkto[M] \mathcal N_1(0,\hat \sigma_{k,\vec r}^2),\notag\\
     \hat \sigma_{k,\vec r}^2 &= \frac{(1-d) d^{2 k-1} \left(2 d^{k+1}+2 d^{k+2} - d^{2 k+2}-d^2 (k+1)-2
   d+k\right)}{\left(1-d^{k+1}\right)^4}.\label{eq:min_res_var}
  \end{align}
  \end{remark}

\begin{remark}
Additionally, one obtains the formulae for the CGA applied to $W \vec x= \vec b$, $\| \vec b \|_2 = 1$, $W \lawequals \mathcal W_{\beta}(N,M), N \leq M$
\begin{align*}
    \mathbb E \|\vec r_k(W, \vec b)\|_2 &= \prod_{j = 0}^{k-1}  \frac{\Gamma\left( \frac{\beta (N -j -1) + 1}{2} \right)}{\Gamma \left( \frac{\beta (N -j -1)}{2} \right)} \frac{ \Gamma\left( \frac{ \beta(M-j)-1}{2} \right)}{ \Gamma \left( \frac{\beta(M-j)}{2} \right)},\\
    \mathbb E \|\vec e_k(W, \vec b)\|_W &= \sqrt{\frac{\beta M}{2}} \frac{ \Gamma\left( \frac{ \beta(M-N+1)-1}{2} \right)}{ \Gamma \left( \frac{\beta(M-N+1)}{2} \right)} \prod_{j = 0}^{k-1}  \frac{\Gamma\left( \frac{\beta (N -j -1) + 1}{2} \right)}{\Gamma \left( \frac{\beta (N -j -1)}{2} \right)} \frac{ \Gamma\left( \frac{ \beta(M-j)-1}{2} \right)}{ \Gamma \left( \frac{\beta(M-j)}{2} \right)},
\end{align*}
where $\Gamma(z)$ is the Gamma function \cite{DLMF}.  For even moderately large $M$, one needs to use the Beta function to compute these ratios and avoid underflow/overflow.
\end{remark}

\begin{remark}
    For $\mfd \to 1$, the CGA applied to $W \vec x = \vec b$ gives
    \begin{align*}
        \frac{\|\vec r_k(W,\vec b)\|_2}{\|\vec r_0(W,\vec b)\|_2} \Wkto[M] 1.
    \end{align*}
    Thus number of iterations required to hit a tolerance $\epsilon$ increase without bound as $M \to \infty$.  On the other hand, for the MINRES algorithm,
    \begin{align*}
        \frac{\|\vec r_k(W,\vec b)\|_2}{\|\vec r_0(W,\vec b)\|_2} \Wkto[M] \frac{1}{\sqrt{k+1}}.
    \end{align*}
    And so, one expects $k \approx \epsilon^{-2}-1$ iterations to achieve $\|\vec r_k(W,\vec b)\|_2 < \epsilon$.  The same statement holds for the CGA applied to the normal equations when $\mfd \to 1$, when one considers the ratio
    \begin{align*}
        \frac{\left\|\vec e_k\left( W, \frac{X}{\sqrt{M}}\vec b \right)\right\|_W}{\left\|\vec e_0\left( W, \frac{X}{\sqrt{M}}\vec b \right)\right\|_W}.
    \end{align*}
\end{remark}

\begin{remark}
If $\vec b = \vec c/\|\vec c\|_2$ where $\vec c$ has iid, mean-zero entries with a finite (non-zero) variance then one expects \eqref{eq:mm_lo} to be sufficient for Theorem~\ref{t:main} to hold --- the moment matching to order two is sufficient if the right-hand side vector is ``sufficiently" random.
\end{remark}

\subsection{A numerical demonstration}

We demonstrate the essential aspects of Theorem~\ref{t:main}(a) for $\|\vec r_k\|_2$ in Figures~\ref{fig:cg_res_wishart} and \ref{fig:cg_res_mm}.  In these figures we compare the CGA applied to $W \vec x = \vec f_1$ with $W \lawequals \mathcal W_{\beta}(N,M)$ and $W = XX^*/M$ where $X$ has iid entries with $\prob ( X_{ij} = 0 ) = 2/3, \prob (X_{ij} = \pm \sqrt{3} ) = 1/6$.  This discrete distribution, which we refer to as the moment matching distribution, is chosen so that the first four moments of $X_{ij}$ coincide with that of $\mathcal N_1(0,1)$.  The figures demonstrate that $\|\vec r_k\|_2$ concentrates heavily as $M$ increases.

The essential aspects of Theorem~\ref{t:main}(b) are shown in Figures~\ref{fig:cg_res_wishart} and \ref{fig:cg_res_mm}. These figures again give the behavior of the MINRES algorithm and CGA applied to the $\beta = 1$ Wishart distribution and the moment matching distribution.  

Lastly, in Figure~\ref{fig:moment}, for the CGA, we compare the statistics of
\begin{align}\label{eq:rescale}
    \sqrt{M} \left( \frac{\|\vec r_k(W, \vec f_1)\|_2^2}{ \langle \|\vec r_k(W, \vec f_1)\|_2^2 \rangle } - 1 \right),
\end{align}
where $\langle Z \rangle$ represents the sample average of $Z$ over 50,000 samples. Note that if \eqref{eq:thm_mm} holds then 
\begin{align*}
    \sqrt{M} \left( \frac{\|\vec r_k(W, \vec f_1)\|_2^2}{ \langle \|\vec r_k(W, \vec f_1)\|_2^2 \rangle } - 1 \right) \approx \mathcal N_1(0,\sigma_{k,d}/2),
\end{align*}
and we therefore compare the density for $\mathcal N_1(0,\sigma_{k,d}/2)$ with \eqref{eq:rescale} in Figure~\ref{fig:moment}.  In this figure we also include computations with the Bernoulli ensemble: $W = XX^*/M$, $X_{ij}$ iid, $\mathbb P(X_{ij} = \pm 1) = 1/2$ which fails to satisfy \eqref{eq:thm_mm}.

In Table~\ref{tab:moment} we display sample variance of \eqref{eq:rescale} for the three different distributions: Wishart, moment matching and Bernoulli.  In the case of the Wishart and moment matching distributions, the variance is close to the large $M$ limit.  In the case of Bernoulli, the variance is quite different.  This indicates that the moment matching condition is a necessary condition for the limiting the variance to be given by \eqref{eq:res_var}.

\begin{figure}[tbp]
\centering
\subfigure[]{\includegraphics[width=.65\linewidth]{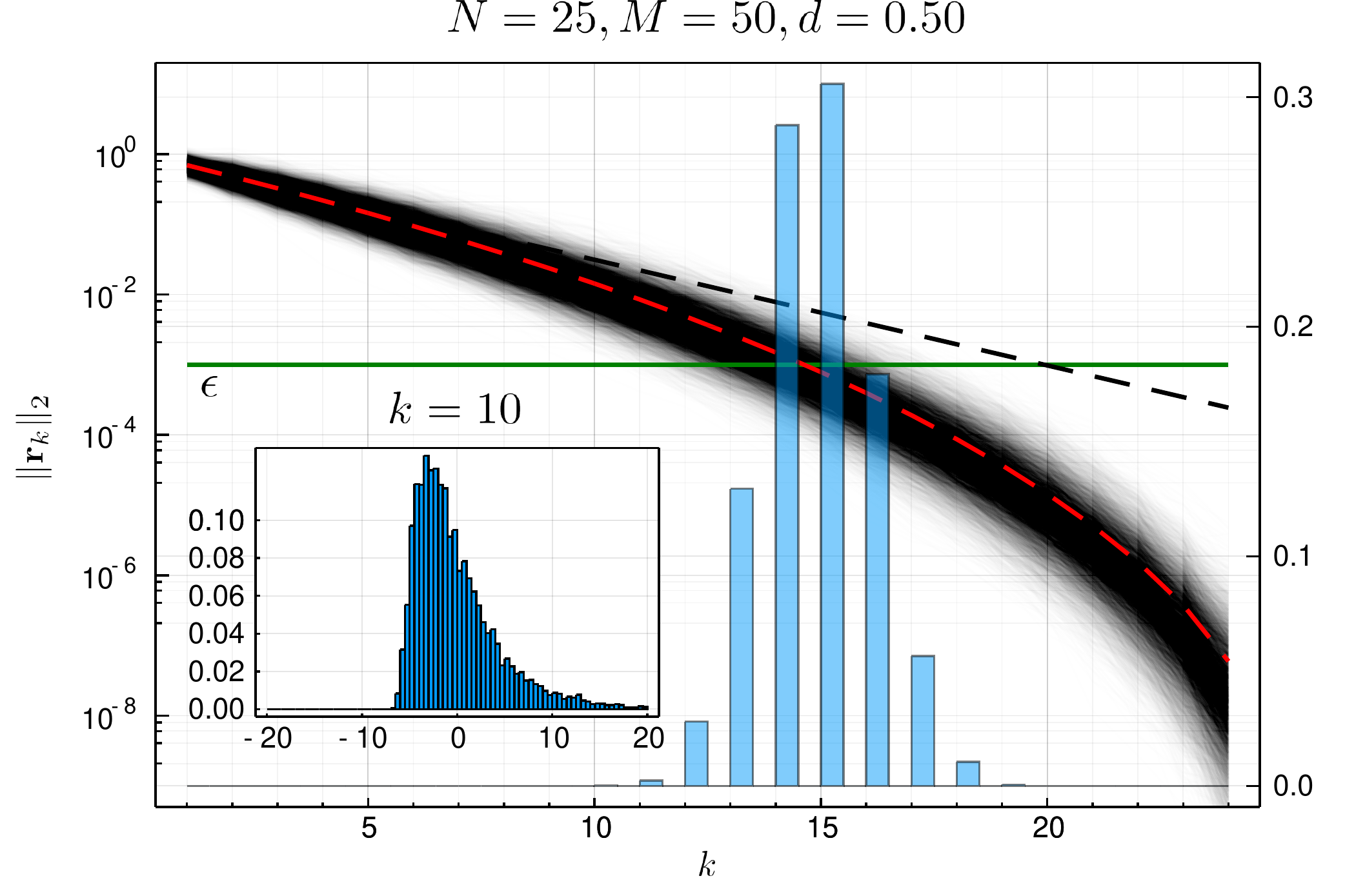}}
\subfigure[]{\includegraphics[width=.65\linewidth]{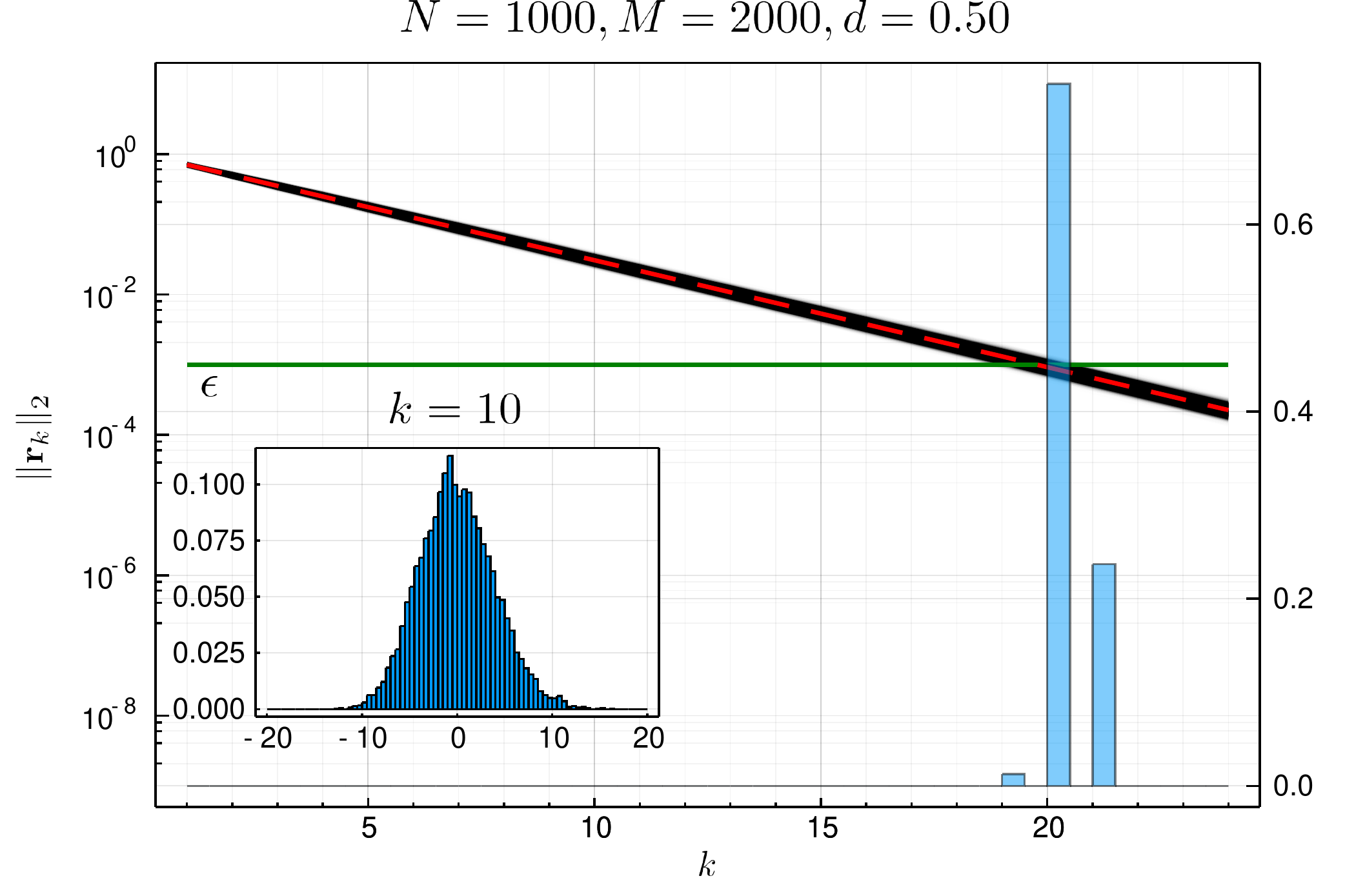}}
    \caption{\label{fig:cg_res_wishart} The CGA applied to $W \vec x = \vec f_1$ were $W \lawequals \mathcal W_{\beta}(N,M)$, $N/M \To[M] d$.  The dashed black curve indicates the large $M$ limit for the error $\|\vec r_k\|_2$ at step $k$ and the dashed red curve gives $\mathbb E \|\vec r_k\|_2$ at step $k$. The shaded gray area is an ensemble of 10000 runs of the method, displaying the norms that resulted.  The overlaid histogram shows the rescaled fluctuations in the error at $k = 10$.  As $M \to \infty$ this approaches a Gaussian density.  Lastly, the histogram in the main frame gives the halting distribution for $\epsilon = 0.001$ (green line).  It is highly concentrated when $N = 1000, M = 2000$.  With these parameters, Remark~\ref{rem:CG_halting} implies that for $M$ large, the algorithm will run for approximately $\left\lceil 2 \frac{\log \epsilon}{\log d} \right\rceil = 20$ iterations. }
\end{figure}
\begin{figure}[tbp]
\centering
\subfigure[]{\includegraphics[width=0.65\linewidth]{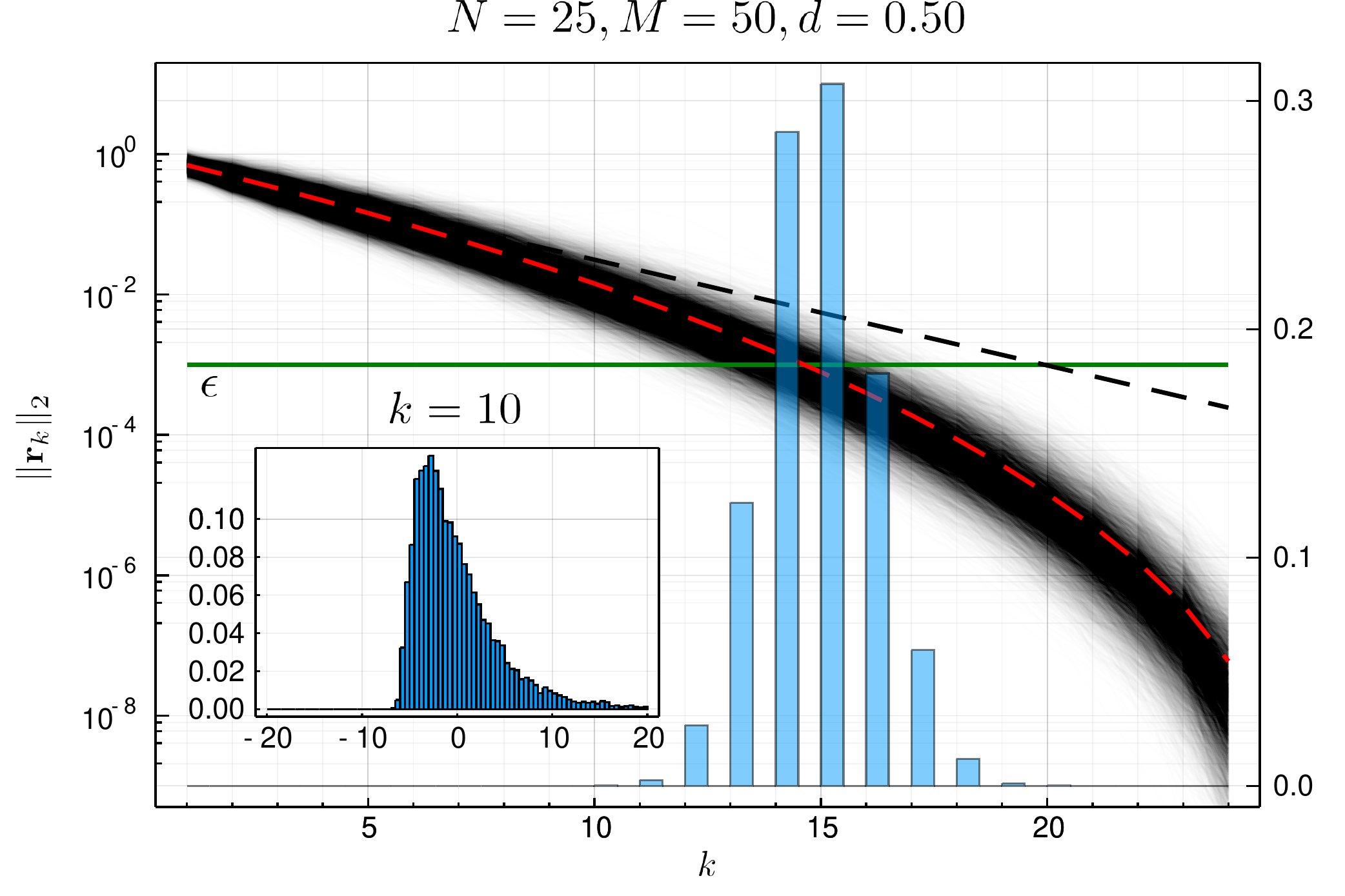}}
\subfigure[]{\includegraphics[width=0.65\linewidth]{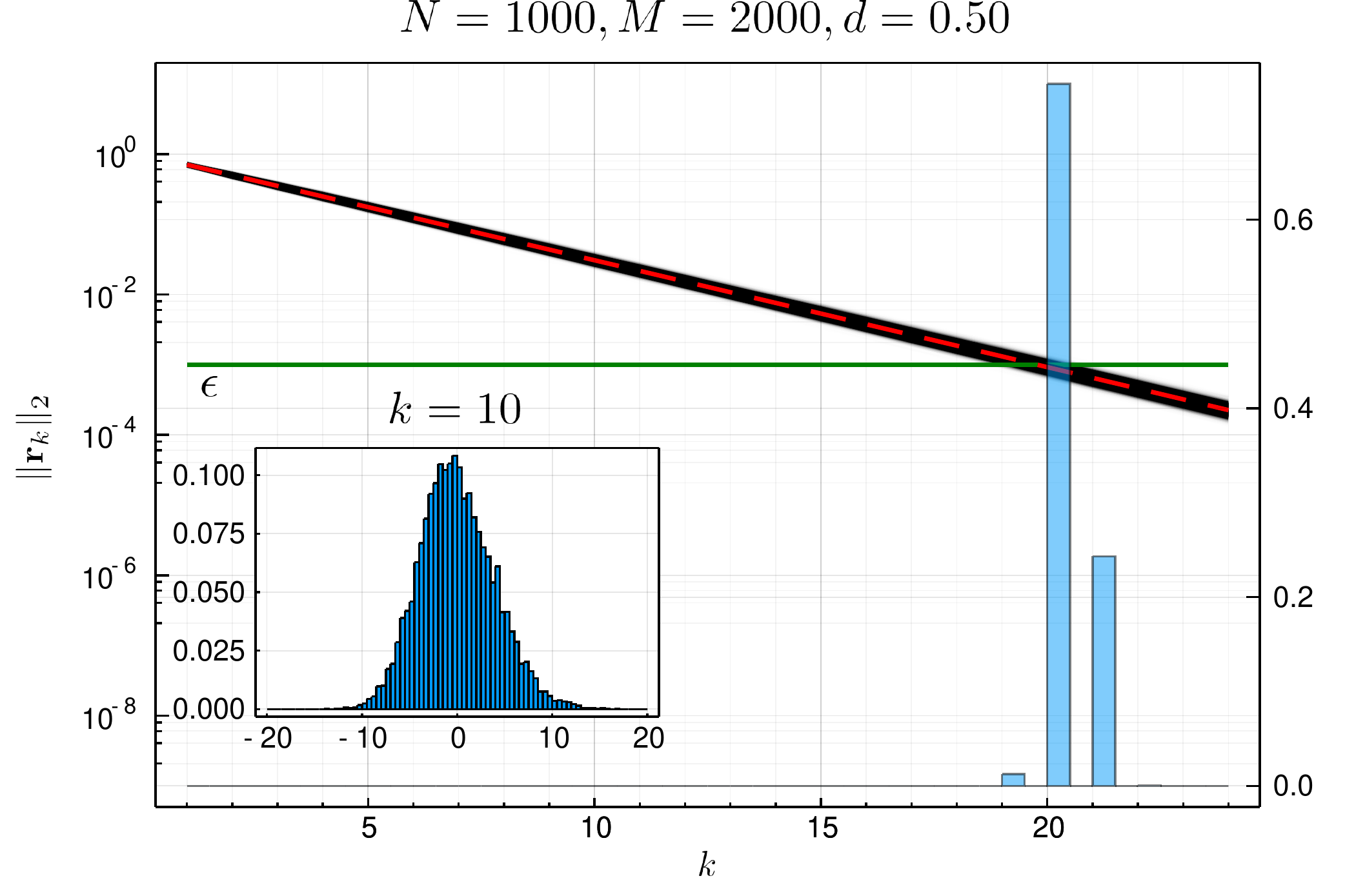}}
    \caption{\label{fig:cg_res_mm} The CGA applied to $W \vec x = M^{-1} XX^* \vec x = f_1$ were $X$ has iid entries with $\prob ( X_{ij} = 0 ) = 2/3, \prob (X_{ij} = \pm \sqrt{3} ) = 1/6$.   The black dashed curve indicates the large $M$ limit for the error $\|\vec r_k\|_2$ at step $k$ the dashed red curve gives $\mathbb E \|\vec r_k\|_2$ at step $k$ in the case of $W \lawequals \mathcal W_{\beta}(N,M)$, for comparison.  The shaded gray area is an ensemble of 10000 runs of the method, displaying the errors that resulted.  The overlaid histogram shows the rescaled fluctuations in the error at $k = 10$.  As $M \to \infty$ this approaches a Gaussian density.  Lastly, the histogram in the main frame gives the halting distribution for $\epsilon = 0.001$ (green line). With these parameters, Remark~\ref{rem:CG_halting} implies that for $M$ large, the algorithm will run for approximately  $\left\lceil 2 \frac{\log \epsilon}{\log d} \right\rceil = 20$ iterations.  }
\end{figure}
\begin{figure}[tbp]
\centering
\subfigure[]{\includegraphics[width=0.65\linewidth]{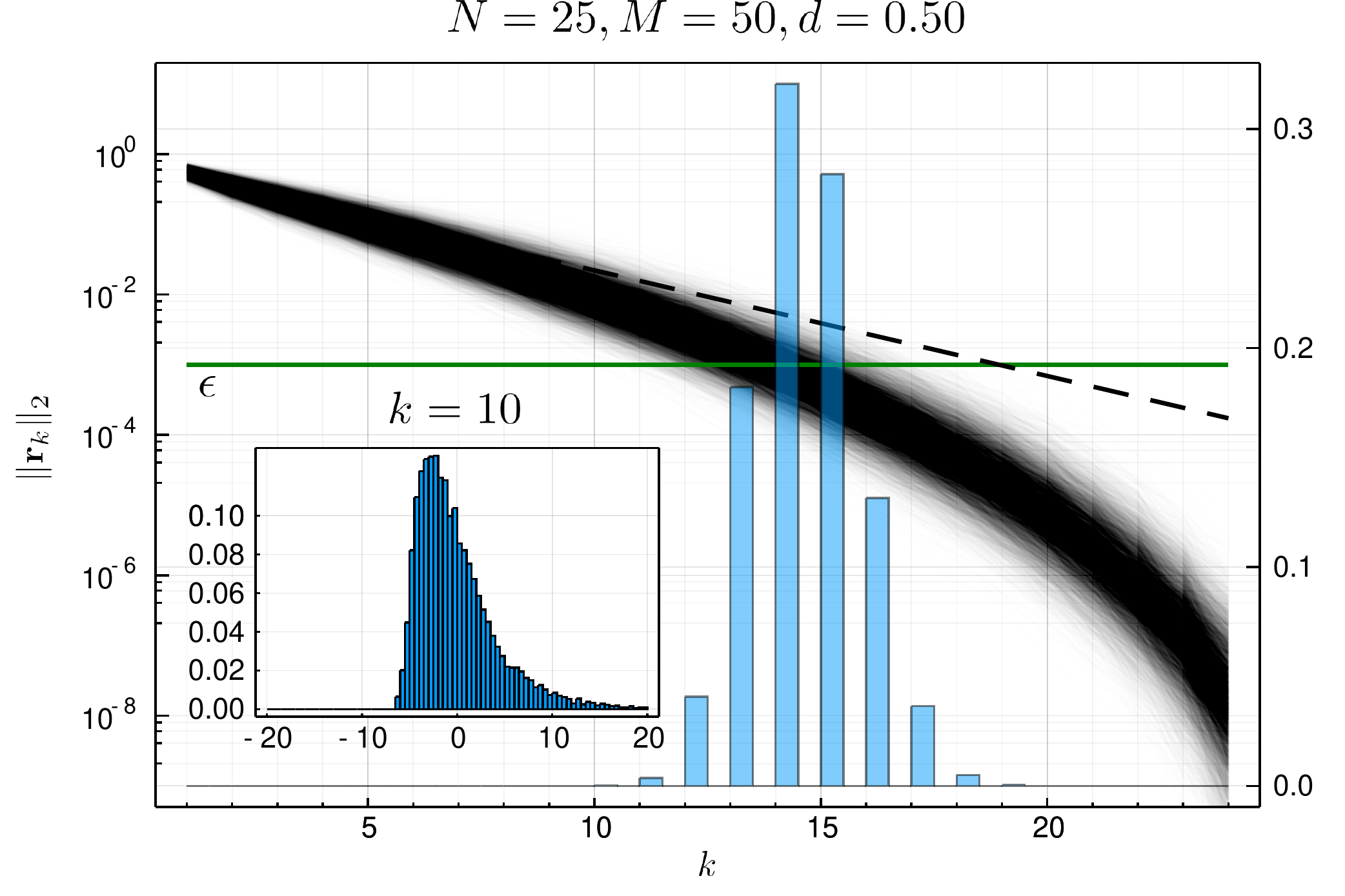}}
\subfigure[]{\includegraphics[width=0.65\linewidth]{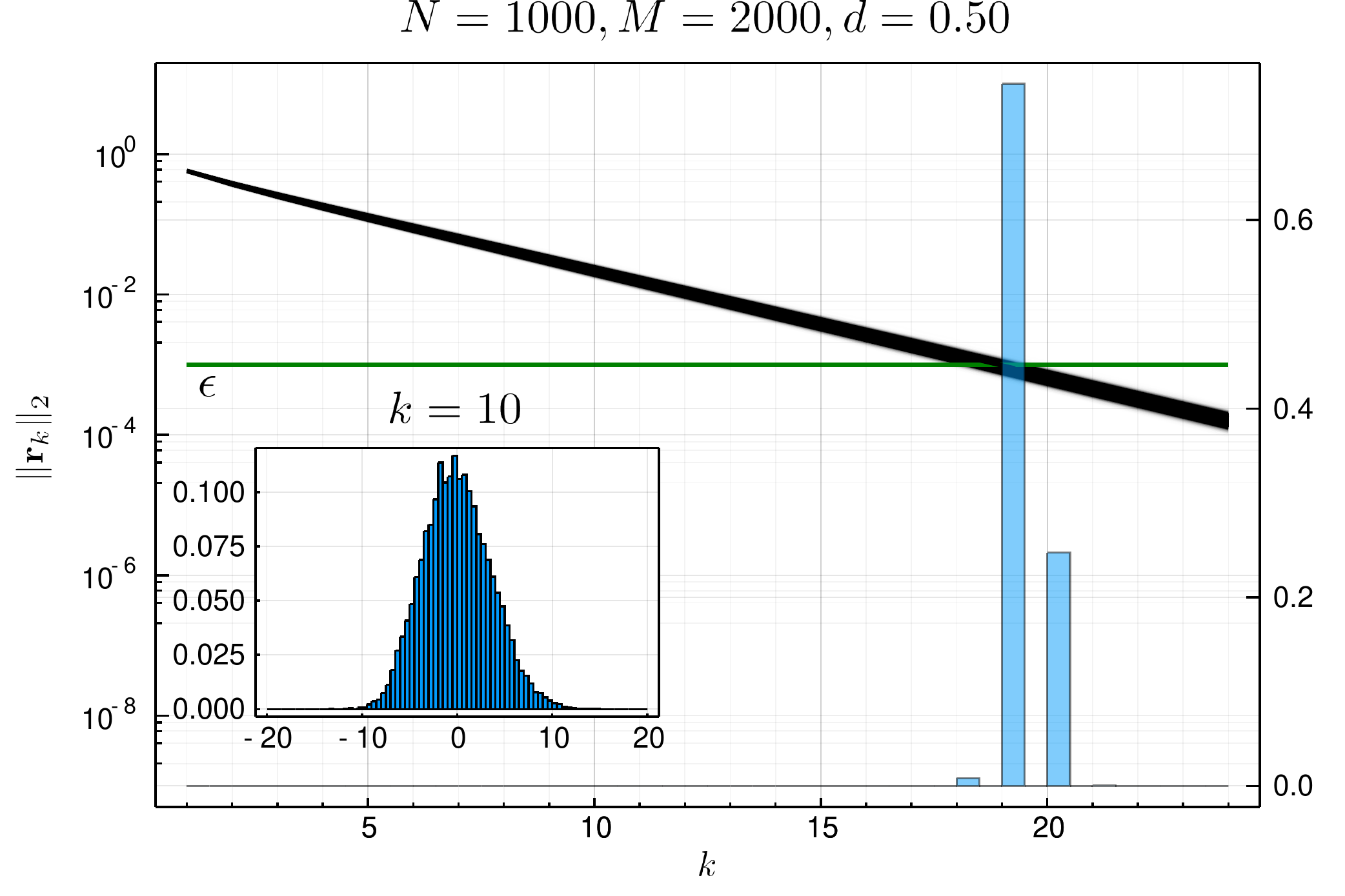}}
    \caption{\label{fig:minres_wishart} The MINRES algorithm applied to $W \vec x = \vec f_1$ were $W \lawequals \mathcal W_{\beta}(N,M).$  The dashed curve indicates the large $M$ limit for the error $\|\vec r_k\|_2$ at step $k$.  The shaded gray area is an ensemble of 10000 runs of the method, displaying the errors that resulted.  The overlaid histogram shows the rescaled fluctuations in the error at $k = 10$.  As $M \to \infty$ this approaches a Gaussian density.  Lastly, the histogram in the main frame gives the halting distribution for $\epsilon = 0.001$ (green line). With these parameters, Remark~\ref{rem:MINRES_halting} implies that for $M$ large, the algorithm will run for approximately $\left\lceil \frac{\log\frac{ \epsilon^2}{1-d+\epsilon^2 d}}{\log d} \right \rceil = 19$ iterations.}
\end{figure}
\begin{figure}[tbp]
\centering
\subfigure[]{\includegraphics[width=0.65\linewidth]{cg_plots/minres_plots_Wishart_05_25_1.pdf}}
\subfigure[]{\includegraphics[width=0.65\linewidth]{cg_plots/minres_plots_Wishart_05_1000_1.pdf}}
    \caption{\label{fig:minres_mm} The MINRES algorithm applied to $W \vec x = M^{-1} XX^* \vec x = f_1$ were $X$ has iid entries with $\prob ( X_{ij} = 0 ) = 2/3, \prob (X_{ij} = \pm \sqrt{3} ) = 1/6$.  The shaded gray area is an ensemble of 10000 runs of the method, displaying the errors that resulted.  The overlaid histogram shows the rescaled fluctuations in the error at $k = 10$.  As $M \to \infty$ this approaches a Gaussian density.  Lastly, the histogram in the main frame gives the halting distribution for $\epsilon = 0.001$ (green line). With these parameters, Remark~\ref{rem:MINRES_halting} implies that for $M$ large, the algorithm will run for approximately $\left\lceil \frac{\log\frac{ \epsilon^2}{1-d+\epsilon^2 d}}{\log d} \right\rceil = 19$ iterations.}
\end{figure}
\begin{figure}[tbp]
\centering
\includegraphics[width=\linewidth]{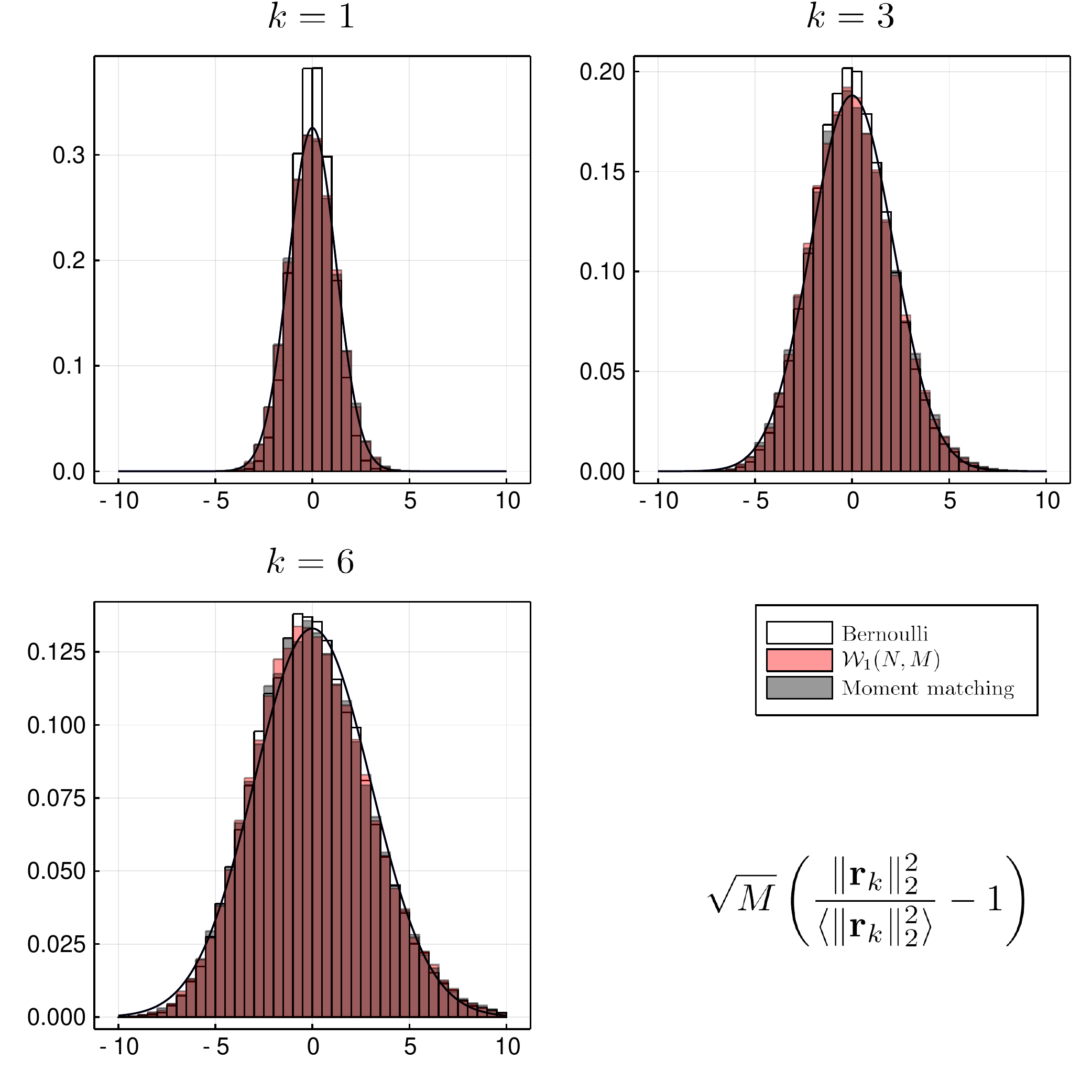}
    \caption{\label{fig:moment} A comparison of the rescaled statistics \eqref{eq:rescale} across three distributions.  Since the Bernoulli ensemble fails to match the moments in \eqref{eq:thm_mm}, we see that it does not match the variance \eqref{eq:res_var}.}
\end{figure}
\begin{table}[tbp]
\centering
\begin{tabular}{|c|c|c|c|c|}
\hline
$k$ & $ k/2 (1 + 1/d)$ & Wishart & Moment matching & Bernoulli\\
\hline
1 & 1.5 & 1.493 & 1.48 &  1.003 \\
\hline
 2& 3.0 & 3.002 & 2.997 & 2.511\\
  \hline
3 &   4.5 & 4.532 & 4.519 & 4.036\\
  \hline
4&  6.0 & 6.040 & 6.039 & 5.527 \\
 \hline
5 &  7.5 & 7.576 & 7.54  & 7.004 \\
 \hline
6&  9.0 & 9.135 & 9.054 & 8.547 \\
 \hline
 \end{tabular}
 \vspace{.05in}
 \caption{\label{tab:moment}  A numerical demonstration of the necessity of the moment matching condition \eqref{eq:thm_mm}.  This table gives the sample variance of \eqref{eq:rescale} across three different distributions for $N = 500, d = 1/2$ and 50,000 samples.}
 \end{table}

\section{Sample covariance matrices and classical numerical linear algebra} \label{sec:SCMs}


A fundamental property of a matrix $X \lawequals \mathcal G_\beta(N,M)$ is its orthogonal ($\beta = 1$) or unitary ($\beta =2$) invariance.  That is, let $Q$ be an $N \times N$ fixed orthogonal matrix then
\begin{align*}
    QWQ^* \lawequals W, \quad W = X X^*.
\end{align*}
If $\beta = 2$, then $Q$ can be a complex unitary matrix. Furthermore, this is true even if $Q$ is random, provided it is independent of $X$.

Let $W \lawequals \mathcal W_\beta(N,M)$ and perform an eigenvalue decomposition $W = U \Lambda U^*$, $U^*U  = I$.  It follows directly from the invariance of the Wishart distribution that the vector
\begin{align*}
    \vec{\omega} = \begin{bmatrix}|U_{11}|^2 \\ \vdots \\ |U_{1n}|^2 \end{bmatrix}, \quad\text{ where }\quad   \left[ U_{ij} \right]_{1 \leq i, j \leq n}= U
\end{align*}
can be parameterized by
\begin{align}\label{eq:paramit}
   \vec {\omega}
   \lawequals
   \frac{\vec {\nu}}{\|\vec {\nu}\|}_1,
\end{align}
where $\vec {\nu}$ is a vector of iid $\chi_\beta^2$ random variables.  This fact is discussed in detail in \cite[Appendix A]{Deift2019b}.

\subsubsection{The eigenvalues of the Wishart distributions}

The global asymptotic eigenvalue distribution of the Wishart distributions is the same, regardless of the choice of $\beta = 1,2$.  The classical setup is the following.  For $W \lawequals \mathcal W_{\beta}(N,M)$, define the (random) empirical spectral measure
\begin{align*}
  \mu_{\mathrm{em}}(\sd\lambda; W) = \frac{1}{N} \sum_{j=1}^N \delta_{\lambda_j(W)}( \sd \lambda).
\end{align*}
Recall the parameter $\mfd = N/M$. 

\begin{definition}
Define the \emph{Marchenko--Pastur} law \emph{for all} $d > 0$ by
\begin{equation}
  \varrho_{d}(\sd x) = \frac{1}{2\pi d}
  \sqrt{
    \frac{ [(x-\gamma_{-})(\gamma_+ - x)]_+}{x^2}}\sd x +  \left[ 1 - \frac{1}{d} \right]_+ \delta_0(\sd x),
  \quad
  \text{where}
  \quad
  \gamma_{\pm} = (1 \pm \sqrt{d})^2
  \label{eq:MP}
\end{equation}
are the spectral edges.  The notation $[\cdot]_+$ refers to the positive part of $(\cdot).$
\end{definition}

The following gives the global eigenvalue distribution (see \cite{Bai2010}, for example):
\begin{theorem} \label{t:classical_MP}
  Suppose that $\mfd \To[M] d \in (0,1]$.  Then
  \begin{align*}
    \mu_{\mathrm{em}}(\sd\lambda;\mathcal W_{\beta}(N,M)) \Wkto[N] \varrho_{d}(\sd \lambda),
  \end{align*}
  almost surely.
\end{theorem}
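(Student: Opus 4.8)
The plan is to prove Theorem~\ref{t:classical_MP} by the Stieltjes transform method. For $z$ in the open upper half-plane set $s_N(z) = \frac1N\tr(W - zI)^{-1} = \int(\lambda - z)^{-1}\mu_{\mathrm{em}}(\sd\lambda; W)$. Since a.s.\ pointwise convergence of Stieltjes transforms on the upper half-plane to the Stieltjes transform of a probability measure is equivalent to a.s.\ weak convergence of the underlying measures, it suffices to show $s_N(z) \to s_d(z) := \int(\lambda - z)^{-1}\varrho_d(\sd\lambda)$ almost surely for each such $z$. By Stieltjes inversion (or a direct residue computation, using $\gamma_\pm = (1\pm\sqrt d)^2$, $\gamma_-\gamma_+ = (1-d)^2$, $\gamma_-+\gamma_+ = 2(1+d)$) one checks that $s_d$ is exactly the branch, analytic on $\mathbb{C}\setminus[\gamma_-,\gamma_+]$ with $\Im s_d(z) > 0$ when $\Im z > 0$, of the quadratic equation $d z\, s^2 + (z + d - 1)s + 1 = 0$; for $d \le 1$ this branch has no pole at $0$, matching the vanishing of the atom $[1 - 1/d]_+$.

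\emph{Step 1 (self-averaging).} Write $W = \frac1M\sum_{j=1}^M\mathbf{x}_j\mathbf{x}_j^*$ where $\mathbf{x}_1,\dots,\mathbf{x}_M$ are the i.i.d.\ columns of $X$. Deleting one column perturbs $W$ by a rank-one matrix, so from $\tr(A + vv^* - z)^{-1} - \tr(A - z)^{-1} = -\frac{v^*(A-z)^{-2}v}{1 + v^*(A-z)^{-1}v}$ and the fact that $\Im(v^*(A-z)^{-1}v) = \Im z\,\|(A-z)^{-1}v\|_2^2$, one gets $|s_N(z) - s_N^{(j)}(z)| \le \frac{1}{N|\Im z|}$ with $s_N^{(j)}$ the deleted-column version. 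Hence the Doob martingale of $s_N(z)$ with respect to the filtration generated by the columns has increments bounded by $2/(N|\Im z|)$; Azuma--Hoeffding plus Borel--Cantelli give $s_N(z) - \Exp s_N(z) \to 0$ a.s.

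\emph{Step 2 (self-consistent equation).} Fix $k$, let $\mathbf{r}_k \in \mathbb{C}^M$ be the $k$-th row of $X$ viewed as a vector, let $X_{(k)}$ be $X$ with that row deleted, and $W_{(k)} = \frac1M X_{(k)}X_{(k)}^*$. The Schur complement formula gives
\[
(W - zI)^{-1}_{kk} = \left( \tfrac1M\|\mathbf{r}_k\|_2^2 - z - \tfrac1{M^2}\mathbf{r}_k^* X_{(k)}^*(W_{(k)} - zI)^{-1}X_{(k)}\mathbf{r}_k \right)^{-1}.
\]
Since $\mathbf{r}_k$ has i.i.d.\ unit-variance entries and is independent of $X_{(k)}$, the standard concentration estimate for quadratic forms yields $\frac1M\|\mathbf{r}_k\|_2^2 = 1 + o(1)$ and $\frac1{M^2}\mathbf{r}_k^*X_{(k)}^*(W_{(k)}-zI)^{-1}X_{(k)}\mathbf{r}_k = \frac1M\tr\!\big((W_{(k)}-zI)^{-1}W_{(k)}\big) + o(1) = \tfrac{N-1}{M}\big(1 + z\, s_{N-1}^{(k)}(z)\big) + o(1)$, where $s_{N-1}^{(k)}$ is the Stieltjes transform of $W_{(k)}$; interlacing lets me replace $s_{N-1}^{(k)}$ by $s_N$ at cost $o(1)$. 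Averaging over $k$ and using $\mfd = N/M \to d$,
\[
s_N(z) = \big(1 - z - \mfd - \mfd z\, s_N(z)\big)^{-1} + o(1), \qquad \text{i.e.}\qquad \mfd z\, s_N(z)^2 + (z + \mfd - 1)s_N(z) + 1 = o(1).
\]
Taking expectations (using Step 1 and the a priori bound $|s_N(z)| \le 1/\Im z$ to handle products of error terms) and invoking stability of the quadratic — its discriminant $(z+\mfd-1)^2 - 4\mfd z$ stays bounded away from $0$ on compact subsets of the upper half-plane — one concludes $\Exp s_N(z) \to s_d(z)$, hence $s_N(z)\to s_d(z)$ a.s., which is the claim.

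The main obstacle is the error bookkeeping in Step 2: making the $o(1)$ terms uniform in $k$ (variance bounds for the quadratic forms, routine here since all moments of the entries of $X$ are finite), controlling $s_{N-1}^{(k)} - s_N$, and turning the approximate quadratic identity into genuine convergence via the stability estimate. As an alternative, attractive here because $X$ is Gaussian, one could instead run the moment method: $\Exp\frac1N\tr W^p$ converges to the $p$-th moment of $\varrho_d$ by a Wick/genus-expansion computation (the limit being a Narayana-type polynomial in $d$), $\Var(\frac1N\tr W^p) = O(N^{-2})$ gives a.s.\ convergence of all moments by Borel--Cantelli, and $\varrho_d$, being compactly supported, is determined by its moments (Carleman); there the obstacle migrates to the combinatorial identification of the limiting moments.
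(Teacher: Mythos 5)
The paper does not prove Theorem~\ref{t:classical_MP}; it is stated as classical and cited to Bai and Silverstein (\cite{Bai2010}). There is therefore no ``paper proof'' to compare against, but your sketch is exactly the standard argument from that reference (Stieltjes transform plus self-consistent equation) and the outline is sound. A few small remarks. Your quadratic $\mfd z\, s^2 + (z + \mfd - 1)s + 1 = 0$ is the correct one in the paper's convention (ratio $\mfd = N/M \le 1$, $W = XX^*/M$ an $N \times N$ matrix), and it is worth noting that the discriminant factors exactly as $(z + \mfd - 1)^2 - 4\mfd z = (z - \gamma_-)(z - \gamma_+)$, which makes the stability claim on compact subsets of $\C\setminus[\gamma_-,\gamma_+]$ immediate and also shows transparently why the branch is regular at $z=0$ when $d \le 1$ (consistent with Proposition~\ref{prop:univ-lo} in the paper, which gives $\int\lambda^{-1}\varrho_d(\sd\lambda) = 1/(1-d)$). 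In Step~1 the bound $|s_N - s_N^{(j)}| \le 1/(N|\Im z|)$ is right, and a Doob martingale with respect to the column filtration gives increments bounded by $2/(N|\Im z|)$; since there are $M \asymp N$ of them, Azuma yields sub-Gaussian concentration at scale $N^{-1/2}$, which is summable and gives a.s.\ convergence along the full sequence after passing to a countable dense set of $z$. In Step~2, passing from the approximate quadratic for the random $s_N$ to an equation for $\Exp s_N$ does require the variance estimate $\Var(s_N) = O(1/N)$ from Step~1 to control $\Exp s_N^2 - (\Exp s_N)^2$, which you correctly flag. Both this route and the Wick/genus moment-method alternative you mention are complete and standard; the moment method is especially short here because $X$ is Gaussian, at the cost of the combinatorial identification of $\int x^p \varrho_d(\sd x)$ with the Narayana generating polynomial.
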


Historically, the behavior of individual eigenvalues, and gaps between eigenvalues, have been studied extensively.  In the analysis we present it is not necessary to use such detailed microscopic results.  Instead, we need finer results about global properties of the matrix.  One such example is the so-called central limit theorem for linear statistics.  

The Bai-Silverstein \cite{BaiSilverstein2004} central limit theorem for linear statistics of sample covariance matrices shows that for sufficiently smooth functions $f,$
\[
\sum_{j=1}^N f(\lambda_j)- N\int f(x) \varrho_{\mfd }(\sd x)
=
N\int f(x)( \mu_{\mathrm{em}}(\sd x) - \varrho_\mfd(\sd x))
\Wkto[N] \mathcal{N}_1(\mu_f,\sigma_f^2).
\]
The standard deviation $\sigma_f$ can be understood as a weighted Sobolev-1/2 norm of $f$, restricted to the support of the Marchenko-Pastur law.  Other related central limit theorems for linear spectral statistics of sample covariance matrices include \cite{DumitriuEdelman2006,Shcherbina2011,Johansson1998}.  

But the classical central limit theorem for linear statistics involves the empirical spectral measure $ \mu_{\mathrm{em}}(\sd\lambda; W)$ which rarely arises in a numerical or computational context.  What is much more likely to arise is the weighted empirical spectral measure:  for $\vec b \in \mathbb C^N$, $\|\vec b\|_2 = 1$ and $W = W^* \in \mathbb C^{N \times N}$ the weighted empirical spectral measure is given by
\begin{align}\label{eq:weighted_mu}
  \mu = \mu_{\vec b}= \sum_{j=1}^N \omega_j \delta_{\lambda_j}, \quad (\omega_j)_{j =1}^N = | U^* \vec b |^2, \quad W = U \Lambda U^*, \quad U^* U = I, \quad \Lambda = \mathrm{diag}(\lambda_1, \ldots, \lambda_N).
\end{align}
We refer to this as the spectral measure associated to the pair $(W,\vec b)$.

We show in Section 5 that for polynomials $p$ and a sample covariance matrix $W$ with identity covariance and for which $\mfd \to d,$
\[
\sqrt{M}\int f(x) ( \mu_{\vec b}(\sd x) - \varrho_{\mfd }(dx) )
\Wkto[N] \mathcal{N}_1(0,\widehat{\sigma_f}^2).
\]
Note that the rate of the central limit theorem changes dramatically from the case of the central limit theorem for linear statistics.
Although we will not need it, the variance $\widehat{\sigma_f}^2$ can be expressed as $c_{\beta,d} \int f^2(x)\varrho_d(d x)$.  Similar theorems have been proven before, most notably by \cite{ORourkeRenfrewSoshnikov2014} who prove a more general statement in the case that $\vec b$ is a coordinate vector.  There is also \cite{ORourkeRenfrewSoshnikov2013} in which the analogous statement is made for Wigner matrices.  We also mention \cite{Duy} and \cite{DuyShirai} which prove related theorems for Gaussian cases.

While it is natural to assume these statements extend to other classes of test functions beyond polynomials, we will not need them (except for the specific case of $f(x)=1/x,$ which we handle by other means -- note that the extension to analytic functions in a neighborhood of the Marchenko-Pastur law does not need new ideas beyond what is necessary for the polynomial case)

\subsection{Sample covariance matrices with independence}\label{sec:gen_SCMs}

In the current work, we use a restricted definition of a sample covariance matrix.

\begin{definition}\label{def:SCMs}
A real ($\beta = 1$) or complex ($\beta = 2$) sample covariance matrix is given by $W \lawequals XX^*$ where $X$ is an $N \times M$ random matrix with independent entries satisfying
\begin{align*}
    \Exp X_{ij} &= 0, ~~~\Exp (\Re X_{ij}) (\Im X_{ij} ) = 0, ~~~ \Exp (\Re X_{ij})^2 = \frac{1}{\beta M},\\ \Exp |X_{ij}|^2 &= \frac{1}{M},
     \quad \text{and} \quad 
    \Exp |\sqrt{M} X_{ij} |^p \leq C_p, \quad \text{for all } p \in \mathbb N.
\end{align*}
\end{definition}

In some cases, we will need restrictions on the first four generalized moments.

\begin{definition}\label{def:moment}
A sample covariance matrix satisfies the $\beta = 1,2$ moment matching condition if
\begin{align*}
    \Exp (\Re X_{ij})^\ell (\Im X_{ij} )^p = \Exp (\Re Y)^\ell (\Im Y )^p
\end{align*}
where $Y \lawequals \mathcal N_\beta(0,1/M)$, for all choices of non-negative integers $\ell,p$ such that $\ell + p \leq 4$.
\end{definition}

\begin{remark}
To see the necessity of the moment matching condition consider a sample covariance matrix $W' = XX/M$ where $X'$ is $N \times M,$ with $X'_{ij} = \pm 1$ with equal probability and $W \lawequals \mathcal W_{1}(N,M)$.  Then consider the first moments of the spectral measures  $\mu$ and $\mu'$ associated to $(W, \vec f_1)$ and $(W' ,\vec f_1)$, respectively:
\begin{align*}
    \int \lambda \mu(\sd \lambda) &= \frac{1}{M} \vec f_1^TXX^T \vec f_1 \lawequals \frac{\chi_{M}^2}{M},\\
    \int \lambda \mu'(\sd \lambda) &= \frac{1}{M} \vec f_1^TX'X'^T \vec f_1 =1.
\end{align*}
\end{remark}

\subsection{The Golub--Kahan bidiagonalization algorithm} \label{sec:bidiag}

\begin{definition}
A Jacobi matrix is given by
\begin{align*}
    T = \begin{bmatrix} a_0 & b_0 \\
    b_0 & a_1 & b_1\\
    & b_1 & a_2 & \ddots\\
    && \ddots & \ddots \end{bmatrix}.
\end{align*}
It may be finite or semi-infinite.  The entries are real and $b_j > 0$ for $j \geq 0$. 
\end{definition}

A reduction of $W = XX^*$ to a Jacobi matrix can be obtained via the Golub--Kahan bidiagonalization procedure.  The distributional action of this algorithm on the Wishart ensembles $\mathcal W_{\beta}(N,M)$ is given in \cite{Dumitriu2002}.  Specifically, if $W = M^{-1} XX^*\lawequals \mathcal W_{\beta}(N,M)$, $X \lawequals \mathcal G_{\beta}(N,M)$ then there exists unitary matrices $U_1$, $U_2$ such that
\begin{align}\label{eq:GK}
\begin{split}
    U_1 X U_2 &\lawequals \sqrt{\beta} X_{\mathrm{GK}},\\
    \sqrt{\beta} X_{\mathrm{GK}} &\lawequals \left[\begin{array}{ccccc|cccccc} \chi_{\beta M} &&&&& \\
    \chi_{\beta(N-1)} & \chi_{\beta(M-1)} &&&& \\
    & \chi_{\beta(N-2)} & \chi_{\beta(M-2)}  && & \quad & 0 & \quad \\
    && \ddots & \ddots && \\
    &&& \chi_{\beta} & \chi_{\beta(M - N + 1)} \end{array}\right],
\end{split}
\end{align}
where all entries are independent.  Therefore the law of the entries of the tridiagonal matrix $U_1 W U_1^* = M^{-1} U_1 XU_2U_2^*X^* U_1^* = \beta X_{\mathrm{GK}} X_{\mathrm{GK}}^T$ is completely parameterized.


\subsection{The Lanczos iteration}

The Lanczos iteration is another algorithm for obtaining a tridiagonal reduction of a matrix.\\

\centerline{\noindent\fbox{%
\refstepcounter{alg}
    \parbox{.9\textwidth}{%
\flushright \boxed{\text{Algorithm~\arabic{alg}: Lanczos Iteration\label{a:lanczos}}}
\begin{enumerate}
    \item $\vec q_1$ is the initial vector.  Suppose $\|\vec q_1\|_2^2 = \vec q_1^* \vec q_1 = 1$, $W^* = W$.
    \item Set $b_{-1} = 1$, $\vec q_{0} = 0.$
    \item For $k = 1,2,\ldots,n$
    \begin{enumerate}
        \item Compute $\displaystyle a_{k-1} = (W \vec q_k - b_{k-2} \vec q_{k-1})^* \vec q_k$.
        \item Set $\vec v_k = W \vec q_k - a_{k-1} \vec q_k - b_{k-2} \vec q_{k-1}$.
        \item Compute $b_{k-1} = \|\vec v_k\|_2$ and if $b_{k-1} \neq 0$, set $\vec q_{k+1} = \vec v_k/b_{k-1}$, otherwise terminate.
    \end{enumerate}
\end{enumerate}
    }%
}}

\vspace{.1in}
The Lanczos algorithm at step $k \leq N$ produces a matrix $T_k$ and orthogonal vectors $\vec q_1,\ldots,\vec q_k$
\begin{align*}
    Q_k = \begin{bmatrix} \vec q_1 & \vec q_2 & \cdots &\vec q_k \end{bmatrix}, \quad T_k = T_k(W,\vec y_1) = \begin{bmatrix} a_0 & b_0 \\
    b_0 & a_1 & \ddots \\
    & \ddots & \ddots & b_{k-2} \\
    & & b_{k-2} & a_{k-1} \end{bmatrix},
\end{align*}
such that
\begin{align}\label{eq:Tk}
 W Q_k = Q_k T_k + b_{k-1} \vec q_{k+1} \vec f_k^*
\end{align}
We use the notation $T = T(W,\vec q_1) = T_n(W,\vec q_1)$ for the matrix produced when the Lanczos iteration is run for its maximum of $n$ steps.

The following is entirely classical \cite{TrefethenBau}.

\begin{lemma}
Suppose $W$ is a symmetric matrix.  And suppose that the Lanczos iteration does not terminate before step $n \leq N$.  For $k = 1,2,\ldots,n$,
\begin{align*}
    \vec q_1, \ldots, \vec q_k
\end{align*}
is an orthonormal basis for the Krylov subspace $\mathcal K_k = \mathrm{span}\{\vec q_1,W\vec q_1,\ldots,W^{k-1}\vec q_1\}.$
\end{lemma}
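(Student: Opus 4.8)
The plan is to proceed by induction on $k$, showing simultaneously that the vectors $\vec q_1,\dots,\vec q_k$ are orthonormal (which is essentially the content of the algorithm's normalization steps) and that they span $\mathcal K_k$. The base case $k=1$ is immediate: $\vec q_1$ has unit norm by hypothesis and $\mathcal K_1 = \mathrm{span}\{\vec q_1\}$. For the inductive step, assume $\vec q_1,\dots,\vec q_k$ form an orthonormal basis of $\mathcal K_k$ and that the iteration has not yet terminated (so $b_{k-1} \neq 0$ and $\vec q_{k+1}$ is defined). I would first argue the span statement: from step 3(b) of Algorithm~\ref{a:lanczos}, $\vec v_k = W\vec q_k - a_{k-1}\vec q_k - b_{k-2}\vec q_{k-1}$, so $\vec q_{k+1} = \vec v_k/b_{k-1}$ lies in $\mathrm{span}\{W\vec q_k, \vec q_k, \vec q_{k-1}\} \subseteq \mathcal K_k + W\mathcal K_k$. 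Since $W\mathcal K_k \subseteq \mathrm{span}\{W\vec q_1,\dots,W^k\vec q_1\} \subseteq \mathcal K_{k+1}$ and $\mathcal K_k \subseteq \mathcal K_{k+1}$, we get $\vec q_{k+1} \in \mathcal K_{k+1}$. Conversely, $W^k \vec q_1 = W(W^{k-1}\vec q_1)$ and $W^{k-1}\vec q_1 \in \mathcal K_k = \mathrm{span}\{\vec q_1,\dots,\vec q_k\}$, so $W^k\vec q_1$ is a combination of $W\vec q_1,\dots,W\vec q_k$, each of which lies in $\mathrm{span}\{\vec q_1,\dots,\vec q_{k+1}\}$ by the definition of $\vec q_{j+1}$ (rearranging step 3(b): $W\vec q_j = b_{j-1}\vec q_{j+1} + a_{j-1}\vec q_j + b_{j-2}\vec q_{j-1}$). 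Hence $\mathcal K_{k+1} \subseteq \mathrm{span}\{\vec q_1,\dots,\vec q_{k+1}\}$, and a dimension count (the $\vec q_j$ being orthonormal, hence linearly independent, and there being $k+1$ of them) gives equality.

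The remaining task is orthonormality: $\|\vec q_{k+1}\|_2 = 1$ is built into step 3(c), so I must check $\vec q_{k+1} \perp \vec q_j$ for $j \le k$. Since $b_{k-1}\vec q_{k+1} = W\vec q_k - a_{k-1}\vec q_k - b_{k-2}\vec q_{k-1}$, I compute $\vec q_j^*(b_{k-1}\vec q_{k+1}) = \vec q_j^* W\vec q_k - a_{k-1}\delta_{jk} - b_{k-2}\delta_{j,k-1}$. For $j=k$ this is $\vec q_k^* W\vec q_k - a_{k-1}$, which vanishes by the definition of $a_{k-1}$ in step 3(a) together with $\vec q_k^*\vec q_{k-1}=0$. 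For $j=k-1$, using $W=W^*$, $\vec q_{k-1}^* W\vec q_k = (W\vec q_{k-1})^*\vec q_k$, and $W\vec q_{k-1} = b_{k-2}\vec q_k + a_{k-2}\vec q_{k-1} + b_{k-3}\vec q_{k-2}$ (the three-term recurrence at step $k-1$), so $(W\vec q_{k-1})^*\vec q_k = b_{k-2}$ by the inductive orthonormality; thus the $j=k-1$ inner product is $b_{k-2}-b_{k-2}=0$. For $j \le k-2$, again $\vec q_j^* W\vec q_k = (W\vec q_j)^*\vec q_k$ and $W\vec q_j \in \mathrm{span}\{\vec q_{j-1},\vec q_j,\vec q_{j+1}\} \subseteq \mathrm{span}\{\vec q_1,\dots,\vec q_{k-1}\}$, which is orthogonal to $\vec q_k$ by the inductive hypothesis; and $\delta_{jk}=\delta_{j,k-1}=0$, so the inner product vanishes. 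This closes the induction.

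The only real subtlety — and the place I would be most careful — is keeping the induction self-consistent: the orthogonality argument for $\vec q_{k+1}$ against $\vec q_{k-1}$ and against $\vec q_j$ with $j\le k-2$ uses the three-term recurrence $W\vec q_i = b_{i-1}\vec q_{i+1} + a_{i-1}\vec q_i + b_{i-2}\vec q_{i-1}$ at earlier indices $i<k$, which is just a rearrangement of steps 3(b)–(c), valid as long as the iteration reached step $i+1$; since we are assuming the iteration has not terminated before step $n$ and $k<n$, all these relations are available. One should also note the boundary conventions $b_{-1}=1$, $\vec q_0 = 0$ handle the first couple of steps uniformly, so no separate small-$k$ cases are needed beyond $k=1$. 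Everything else is bookkeeping with finite-dimensional linear algebra; there is no analytic content.
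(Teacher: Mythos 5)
Your proof is correct. The paper does not actually give an argument here---it labels the lemma ``entirely classical'' and defers to \cite{TrefethenBau}---and the simultaneous induction you carry out (spanning via the three-term recurrence in both directions, orthogonality by splitting into the cases $j=k$, $j=k-1$, $j\le k-2$ and using $W=W^*$ together with the inductive hypothesis) is precisely the standard textbook argument found in that reference.
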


The following result gives us the distribution of $T_k$ throughout the Lanczos iteration applied to a Wishart matrix and it is a direct consequence of the invariance of the Wishart distributions.

\begin{theorem}\label{t:lanczos}
Suppose $W  \lawequals \mathcal W_{\beta}(N,M)$.  For any given $\vec q_1 \in \mathbb R^n$ with $\|\vec q_1\|_2=1$(or $\mathbb C^n$ for $\beta = 2$) with probability one, the Lanczos iteration does not terminate if $k < n:= \min\{N,M\}$.  And the distribution on $a_k,b_k$, $k = 0,2,\ldots,n-1$ does not depend on $\vec q_1$.  In a distributional sense it suffices to take $\vec q_1 = \vec f_1$ and therefore the distribution is determined by the Householder tridiagonalization of $W$, i.e., the Golub--Kahan bidiagonalization of $X$.
\end{theorem}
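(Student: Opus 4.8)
The plan is to play two facts off one another: the Lanczos iteration is \emph{equivariant} under unitary conjugation of its inputs, whereas the law $\mathcal{W}_\beta(N,M)$ is \emph{invariant} under conjugation by any fixed unitary. Together these erase the dependence on $\vec q_1$, reducing everything to the single case $\vec q_1 = \vec f_1$, which is then settled by the Golub--Kahan bidiagonalization in \eqref{eq:GK}. For the equivariance, I would check by a short induction on $k$ that, for any fixed unitary $Q$ (orthogonal if $\beta = 1$), the iterates of Algorithm~\ref{a:lanczos} obey $\vec q_k(QWQ^*, Q\vec q_1) = Q\,\vec q_k(W, \vec q_1)$ while the scalars $a_{k-1}, b_{k-1}$ are left unchanged; this is immediate because each step of the algorithm is assembled only from applications of $W$ and Euclidean inner products, both of which intertwine with the substitutions $W \mapsto QWQ^*$, $\vec v \mapsto Q\vec v$. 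Hence $T_n(QWQ^*, Q\vec q_1) = T_n(W, \vec q_1)$ as a deterministic identity.

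Next, given a deterministic unit vector $\vec q_1$, pick a deterministic unitary $Q$ with $Q\vec q_1 = \vec f_1$ (a single Householder reflector suffices). Instantiating the equivariance identity at these inputs gives $T_n(QWQ^*, \vec f_1) = T_n(W, \vec q_1)$, and the Wishart invariance recalled at the start of Section~\ref{sec:SCMs} gives $QWQ^* \lawequals W$; applying the fixed measurable map $A \mapsto T_n(A, \vec f_1)$ then yields $T_n(QWQ^*, \vec f_1) \lawequals T_n(W, \vec f_1)$. Composing, $T_n(W, \vec q_1) \lawequals T_n(W, \vec f_1)$, so the joint law of $(a_k, b_k)_k$ is the same for every starting vector; a random $\vec q_1$ independent of $W$ reduces to this by conditioning on $\vec q_1$.

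It remains to handle $\vec q_1 = \vec f_1$. Here I would note that the left unitary factor $U_1$ of Section~\ref{sec:bidiag} is a product of Householder reflectors acting only on coordinates $\geq 2$, so $U_1 \vec f_1 = \vec f_1$; thus $U_1^*$ is a unitary matrix with first column $\vec f_1$ conjugating $W$ to the tridiagonal matrix $U_1 W U_1^* \lawequals \beta X_{\mathrm{GK}} X_{\mathrm{GK}}^T$ of \eqref{eq:GK}. The off-diagonal entries of that matrix are, up to the $\beta$-scaling, products $\chi_{\beta(M-k)}\,\chi_{\beta(N-k-1)}$ of independent chi random variables, which are almost surely strictly positive exactly while $k \leq \min\{N,M\} - 2$. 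Thus the tridiagonalization is non-degenerate, and by the uniqueness of a tridiagonal reduction prescribed by the first column of the transforming unitary matrix together with the sign convention $b_j > 0$ (an implicit-$Q$-type statement, valid precisely because no off-diagonal vanishes), it must coincide with the Lanczos tridiagonalization $T_n(W, \vec f_1)$ and basis $Q_n$. In particular $b_{k-1} \neq 0$ for all $k < n = \min\{N,M\}$, so the iteration does not terminate early, and $T_n(W, \vec f_1)$ has the explicit law recorded in \eqref{eq:GK}. (The case $M < N$ is entirely analogous.)

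The only genuinely non-formal ingredient is this last identification — that Lanczos from $\vec f_1$ reconstructs the Golub--Kahan tridiagonal — since it is an implicit-$Q$-type uniqueness claim that holds only once the off-diagonal entries are known to be nonzero; accordingly the identification and the almost-sure non-degeneracy must be established jointly rather than one after the other. Everything else is formal bookkeeping with unitary invariance.
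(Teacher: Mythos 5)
Your argument is correct and is exactly the sketch the paper intends (the paper simply asserts the theorem as "a direct consequence of the invariance of the Wishart distributions" without writing out a proof): equivariance of the Lanczos iteration under unitary conjugation, Wishart invariance to reduce to $\vec q_1 = \vec f_1$, and identification of that case with the Golub--Kahan tridiagonalization together with the almost-sure positivity of the chi-distributed off-diagonal entries. You are also right to flag that the implicit-$Q$ identification and the non-termination claim must be established together; a clean way to close this is to observe directly that the columns of $U_1^*$ satisfy the Lanczos three-term recurrence with the Golub--Kahan coefficients, whose sub-diagonal entries are products of independent chi variables with positive degrees of freedom and hence almost surely positive, so this is in fact the (unique) Lanczos output from $\vec f_1$.
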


Every $N \times N$ symmetric tridiagonal matrix $T$ produces a probability measure
\begin{align*}
    \mu_T = \sum_{j=1}^N \omega_j \delta_{\lambda_j} 
\end{align*}
where $\lambda_j$'s are the eigenvalues of $T$ and $\omega_j$ is the squared modulus of the first component of the normalized eigenvector associated to $\lambda_j$.  The spectral measure $\mu_T$, $T = T(W,\vec b)$ coincides with the spectral measure associated to the pair $(W,\vec b)$ whenever $\vec b$ is a unit vector.  There is a bijection between such measures and Jacobi matrices \cite{DeiftOrthogonalPolynomials}.

\section{Theory of orthogonal polynomials}\label{sec:OPs}

Let $\mu$ be a Borel probability measure on $\mathbb R$ with finite moments.  The orthonormal polynomials $(p_n)_{n \geq 0}$, $p_n(\lambda) = p_n(\lambda;\mu)$ are constructed by applying the Gram--Schmidt process to the sequence of functions 
\begin{align*}
    \{\lambda \mapsto 1,\lambda \mapsto\lambda,\lambda \mapsto\lambda^2,\ldots\}.
\end{align*}
If the support of $\mu$ contains at least $N$ points then one is guaranteed to be able to construct $(p_0,p_1,\ldots,p_{N-1})$.

\subsection{Hankel determinants, moments and the three-term recurrence}

We now recall the classical fact that the coefficients in a three-term recurrence relation can be recovered as an algebraic function of the moments of the associated spectral measure.  For a given sequence of orthonormal polynomials, $(p_j(\lambda))_{j \geq 0} = (p_j(x;\mu))_{j \geq 0}$ with respect to a measure\footnote{For our purposes it suffices to assume that $\mu$ has compact support.} $\mu$, we have the associated three-term recurrence
\begin{align}\label{eq:3term}
    \lambda p_n(\lambda) = b_n p_{n+1}(\lambda) + a_n p_n(\lambda) + b_{n-1} p_{n-1}(\lambda), \quad n \geq 0, \quad b_n > 0,
\end{align}
with the convention $p_{-1}(\lambda) = 0$ and $b_{-1} = 0$.  Here $b_n = b_n(\mu), a_n = a_n(\mu)$ are called the recurrence coefficients.  We will use the following proposition in a critical way to translate any discussion of the output of the Lanczos iteration to a discussion of orthogonal polynomials.

\begin{proposition}\label{prop:lanczos_equiv}
The three-term recurrence coefficients generated by the spectral measure associated to the pair $(W,\vec b)$, $W > 0, \|\vec b\|_2 = 1$ coincide with the entries of the Lanczos matrix $T(W,\vec b)$.  
\end{proposition}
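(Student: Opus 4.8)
The plan is to build an explicit isometry carrying $L^2(\mu)$ onto the Krylov space and to check that it matches the orthonormal polynomials of $\mu$ with the Lanczos vectors, so that the two three-term recurrences become literally the same recurrence. Write $W = U\Lambda U^*$ with $\Lambda = \diag(\lambda_1,\ldots,\lambda_N)$ and $U^*U = I$, and let $\mu = \mu_{\vec b}$ be as in \eqref{eq:weighted_mu}, so that $\omega_j = |(U^*\vec b)_j|^2$. For a polynomial $p$ set $\Phi(p) = p(W)\vec b \in \C^N$. Since $W^* = W$ and the eigenvalues $\lambda_j$ are real, for polynomials $p,q$ one computes
\[
 (\Phi p)^* (\Phi q) = \vec b^*\, p(W)^* q(W)\, \vec b = \sum_{j=1}^N \omega_j\, \overline{p(\lambda_j)}\, q(\lambda_j) = \int \overline{p}\, q\, \sd\mu .
\]
Thus $\Phi$ is an isometry from the polynomials, equipped with the $L^2(\mu)$ inner product $\langle f,g\rangle_\mu = \int \overline f g\,\sd\mu$, into $\C^N$; it is injective on polynomials of degree less than $n$, where $n$ is the number of atoms of $\mu$, and it intertwines multiplication by $\lambda$ with the action of $W$, i.e.\ $\Phi(\lambda\, p(\lambda)) = W\,\Phi(p)$. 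In particular, since $\mu$ is a probability measure, $p_0(\,\cdot\,;\mu)\equiv 1$, so $\Phi(p_0) = \vec b$.

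Next I would apply $\Phi$ to the three-term recurrence \eqref{eq:3term} for the orthonormal polynomials $(p_j(\,\cdot\,;\mu))$. Setting $\vec q_j := \Phi(p_{j-1}) = p_{j-1}(W)\vec b$, the isometry together with orthonormality of the $p_j$ shows the $\vec q_j$ are orthonormal, and the recurrence becomes $W\vec q_j = b_{j-1}(\mu)\vec q_{j+1} + a_{j-1}(\mu)\vec q_j + b_{j-2}(\mu)\vec q_{j-1}$, which is exactly the relation \eqref{eq:Tk} with coefficients $a_j = a_j(\mu)$, $b_j = b_j(\mu)$. It remains to verify that these $\vec q_j$ and scalars are precisely the output of Algorithm~\ref{a:lanczos}, which I would do by induction on $k$: the algorithm initializes $\vec q_1 = \vec b = p_0(W)\vec b$; and if $\vec q_1,\ldots,\vec q_k$ and $a_0,\ldots,a_{k-2},b_0,\ldots,b_{k-2}$ have already been identified with the orthogonal-polynomial quantities, then step (a) gives $a_{k-1} = (W\vec q_k - b_{k-2}\vec q_{k-1})^*\vec q_k = a_{k-1}(\mu)$ (using orthonormality and $p_k\perp p_{k-1}$ in $L^2(\mu)$), step (b) gives $\vec v_k = b_{k-1}(\mu)\, p_k(W)\vec b$, and step (c) gives $b_{k-1} = \|\vec v_k\|_2 = b_{k-1}(\mu)$ — here $\|p_k(W)\vec b\|_2 = \|p_k\|_{L^2(\mu)} = 1$ and $b_{k-1}(\mu) > 0$ — together with $\vec q_{k+1} = p_k(W)\vec b$, closing the induction. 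Hence $T(W,\vec b)$ is the Jacobi matrix of $\mu$, which is the assertion.

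I do not expect a substantive obstacle here; the only thing requiring care is the index bookkeeping — lining up the conventions of \eqref{eq:3term} (coefficients of $p_n$) with those of the Lanczos output \eqref{eq:Tk}, and keeping track of the conjugation in the complex ($\beta=2$) case when checking $a_{k-1}$ is real — together with the degree count, namely that $\mu$ has at least $n$ atoms so that $p_0,\ldots,p_{n-1}$ all exist. The latter is automatic, since the number of atoms of $\mu_{\vec b}$ equals the grade of $\vec b$ with respect to $W$, which is exactly the step at which Algorithm~\ref{a:lanczos} terminates. The entire content is the isometry $\Phi$ together with uniqueness of the Lanczos data, so this is genuinely a classical computation (cf.\ \cite{DeiftOrthogonalPolynomials,TrefethenBau}).
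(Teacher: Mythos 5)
Your argument is correct, and it is the standard one. The paper states Proposition~\ref{prop:lanczos_equiv} without a proof, treating it as a classical fact (the surrounding references to \cite{DeiftOrthogonalPolynomials,TrefethenBau} and the phrase ``the following is entirely classical'' for the Krylov lemma signal as much), so there is no paper proof to compare against. Your proof via the isometry $\Phi: p \mapsto p(W)\vec b$ from $L^2(\mu_{\vec b})$ into $\C^N$, its intertwining property $\Phi(\lambda p) = W\Phi(p)$, and the induction identifying the Lanczos vectors with $p_{k-1}(W)\vec b$ is exactly how this equivalence is established in the orthogonal-polynomial/spectral-theory literature. One small remark: for the step where you read off $a_{k-1} = a_{k-1}(\mu)$ from the inner product, it is worth saying explicitly that the $p_j$ have real coefficients (since $\mu$ is a positive measure on $\R$), so the conjugation in $\langle \cdot,\cdot\rangle_\mu$ is harmless even in the $\beta=2$ case — you flag this but do not quite close it. The termination/grade observation at the end is also correct and is needed to make the statement meaningful past the point where the measure has too few atoms.
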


We write $p_n(\lambda) = \ell_n \lambda^n + s_n \lambda^{j-1} + \cdots$ and find by equating coefficients that
\begin{align*}
    \ell_n = b_n \ell_{n+1},\\
    a_n \ell_n = b_n s_{n+1}.
\end{align*}
Define $D_n$ and $D_n(\lambda)$ by the determinants
\begin{align*}
    D_n &= \det M_n, \quad (M_n)_{ij} = m_{i+j-2}, \quad 1 \leq i,j \leq n+1, \quad m_j(\mu) = m_{j} = \int \lambda^j \mu(\sd \lambda),\\
    D_n(\lambda) &= \det M_n(\lambda),
\end{align*}
and $M_n(\lambda)$ is formed by replacing the last row of $M_n$ with the row vector $[1 ~\lambda ~ \lambda^2 \cdots \lambda^n]$.  Then, it is well-known that \cite{DeiftOrthogonalPolynomials}
\begin{align*}
    p_n(\lambda) = \frac{D_n(\lambda)}{\sqrt{D_n D_{n-1}}},
\end{align*}
and therefore
\begin{align}\label{eq:sandl}
    \ell_n = \sqrt{\frac{D_{n-1}}{D_{n}}}, \quad s_n = \det \tilde M_n,
\end{align}
where $\tilde M_n$ is the matrix formed by removing the last row and second-to-last column of $M_n$.  This shows that $a_n/\sqrt{D_{n-1}}$ and $b^2_n$ are rational functions of determinants of matrices involving only the moments of $\mu$ up to order $2n$.

Associated to the three-term recurrence \eqref{eq:3term} is the Jacobi matrix
\begin{align*}
    T = \begin{bmatrix} a_0 & b_0 \\
    b_0 & a_1 & b_1\\
    & b_1 & a_2 & \ddots\\
    && \ddots & \ddots \end{bmatrix}.
\end{align*}
  Let $T_n$ denote the upper-left $n \times n$ subblock of $T$.  It follows immediately that $T_n$ is a differentiable function of $(m_0,m_1,\ldots,m_{2n})$ on the open subset of $\R^{2n+1}$ where all $D_k > 0$ for $1 \leq k \leq n$.  We also note that
\begin{align}\label{eq:TriMoment}
    \vec f_1^* T^k \vec f_1 = \int \lambda^k \mu(\sd \lambda).
\end{align}
This can be seen by a direct calculation if $T$ is a finite-dimensional matrix.  If $T$ is semi-infinite, then this fact follows from \cite[(2.25)]{DeiftOrthogonalPolynomials}.

\subsection{Monic polynomials and Stieltjes transforms}

The monic orthogonal polynomials associated to a measure $\mu$ are given by
\begin{align}
    \pi_n(\lambda;\mu) = \pi_n(\lambda) &= p_n(\lambda)/\ell_n = \lambda^n + \cdots.
    \end{align}
    We will also need the Stieltjes transform of the monic polynomials
    \begin{align}
     c_n(z;\mu) = c_n(z) &= \int_{\mathbb R} \frac{\pi_n(\lambda)}{\lambda - z} \mu(\sd  \lambda).
\end{align}
With the convention that $b_{0} = 1$, $\pi_{-1} \equiv 0$ and $c_{-1} \equiv -1$ it is elementary that the following recurrences are satisfied for $n = 0,1,2,\ldots,$
\begin{align*}
    &\pi_{n + 1}(\lambda) = (\lambda- a_n) \pi_{n}(\lambda) - b_{n-1}^2 \pi_{n-1}(\lambda), \quad \pi_{0}(\lambda) = 1,\\
    & c_{n + 1}(z) = (z- a_n) c_{n}(z) - b_{n-1}^2 c_{n-1}(z), \quad c_0(z) = \int_{\mathbb R} \frac{\mu(\sd \lambda)}{\lambda - z} .
\end{align*}


\section{The conjugate gradient algorithm and the MINRES algorithm}\label{sec:cg}

In this section we discuss three algorithms: the CGA, the CGA applied to the normal equations and the MINRES algorithm.

\subsection{The CGA}

The actual CGA is given by the following.\\

\centerline{\noindent\fbox{%
\refstepcounter{alg}
    \parbox{0.9\textwidth}{%
\flushright \boxed{\text{Algorithm~\arabic{alg}: Conjugate Gradient Algorithm\label{a:cga}}}
\begin{enumerate}
    \item $\vec x_0$ is the initial guess.
    \item Set $\vec r_0 = \vec b - W \vec x_0$, $\vec p_0= \vec r_0$.
    \item For $k = 1,2,\ldots,n$
    \begin{enumerate}
        \item Compute $\displaystyle a_{k-1} = \frac{\vec r_{k-1}^* \vec r_{k-1}}{\vec r_{k-1}^* W \vec p_{k-1}}$.
        \item Set $\vec x_{k} = \vec x_{k-1} + a_{k-1} \vec p_{k-1}$.
        \item Set $\vec r_{k} = \vec r_{k-1} - a_{k-1} W \vec p_{k-1}$.
        \item Compute $\displaystyle b_{k-1} = - \frac{\vec r_k^* \vec r_k}{\vec r_{k-1}^* \vec r_{k-1}}$.
        \item Set $\vec p_k =  \vec r_k - b_{k-1} \vec p_{k-1}$.
    \end{enumerate}
\end{enumerate}
    }%
}}
\vspace{.05in}

As noted previously, a remarkable fact is that the iterates $\vec x_k$ of the CGA applied to the linear system $W \vec x = \vec b$ are given by the solution of the  minimization problem \eqref{eq:variational} \cite{Hestenes1952}.  From this, we see that $\vec y \in \mathcal K_k$ can be written as
\begin{align*}
    \vec y = \sum_{j=0}^{k-1} c_j W^j \vec b \quad \Rightarrow \quad \vec x - \vec y =  W^{-1} \left(\vec b -  \sum_{j=0}^{k-1} c_j W^{j+1} \vec b\right) = W^{-1} q_{\vec y}(W) \vec b,
\end{align*}
for a polynomial $q_{\vec y}$ of degree at most $k$ and it satisfies $q_{\vec y}(0) = 1$.  Then, computing further,
\begin{align*}
    \|\vec x - \vec y\|_W^2 = \vec b^* q_{\vec y}(W)^* W^{-1} q_{\vec y}(W) \vec b.
\end{align*}
And setting $W = U \Lambda U^*$, we find
\begin{align*}
    \|\vec x - \vec y\|_W^2 = \sum_{j=1}^N \frac{|q_{\vec y} (\lambda_j)|^2}{\lambda_j} |(U^*\vec b)_j|^2 = \int  \frac{|q_{\vec y} (\lambda)|^2}{\lambda} \mu_T( \sd \lambda), \quad T = T(W,\vec b).
\end{align*}
Now, all directional derivatives of this, when $\vec y = \vec x_k$, with respect to coefficients of the polynomial must vanish identically.  This gives a characterization of $q_{\vec x_k}$:  Let $\delta q_k$ be a polynomial of degree at most $k$ that satisfies $\delta q_k(0) = 0$ and we must have
\begin{align*}
    0 = \int q_{\vec x_k}(\lambda) \frac{\delta q_k(\lambda)}{\lambda} \mu_T( \sd \lambda).
\end{align*}
This implies that $q_{\vec x_k}(\lambda)$ is orthogonal to all lower-degree polynomials, with respect to $\mu_T$:  It is given by
\begin{align*}
    q_{\vec x_k}(\lambda) = \frac{\pi_k(\lambda;\mu_T)}{\pi_k(0;\mu_T)}.
\end{align*}
\begin{proposition} \label{prop:errors}
Let $\vec x_k$ be the computed solution at step $k$ of the CGA applied to $W \vec x = \vec b$. For any $k \in \N$, with $T=T(W,\vec b)$,
\begin{align*}
    \|\vec e_k\|_W^2  =  \frac{c_k(0;\mu_{T})}{\pi_k(0;\mu_{T})}
    \quad 
    \text{and}
    \quad
    \|\vec r_k\|^2_2 
    = \frac{\prod_{j=0}^{k-1} b_j(\mu_T)^2}{\pi_k(0;\mu_{T})^2}.
\end{align*}
\end{proposition}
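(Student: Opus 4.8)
The plan is to feed the explicit minimizing polynomial from the variational analysis preceding the statement into the two integral representations already derived there. Recall that for $\vec y\in\mathcal K_k$ one has $\vec x-\vec y = W^{-1}q_{\vec y}(W)\vec b$ for a polynomial $q_{\vec y}$ of degree $\le k$ with $q_{\vec y}(0)=1$, that $\|\vec x-\vec y\|_W^2 = \int \lambda^{-1}|q_{\vec y}(\lambda)|^2\,\mu_T(\sd\lambda)$, and that the CGA iterate is the one with $q_{\vec x_k}(\lambda) = \pi_k(\lambda;\mu_T)/\pi_k(0;\mu_T)$. Since $W>0$, the measure $\mu_T$ is supported in $(0,\infty)$, so $\pi_k(0;\mu_T)\neq 0$ (the zeros of $\pi_k$ lie in the convex hull of $\mathrm{supp}\,\mu_T$) and $c_k(0;\mu_T)$ is finite; I will carry out the computation for $k<n$, the remaining cases being immediate since then $\vec r_k=\vec 0$ and both sides vanish.

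For the error, substituting $q_{\vec x_k}$ gives $\|\vec e_k\|_W^2 = \pi_k(0)^{-2}\int \lambda^{-1}\pi_k(\lambda)^2\,\mu_T(\sd\lambda)$. I would then write $\pi_k(\lambda)=\pi_k(0)+\lambda\, r(\lambda)$ with $\deg r = k-1$, so that $\lambda^{-1}\pi_k(\lambda)^2 = \pi_k(0)\,\lambda^{-1}\pi_k(\lambda)+\pi_k(\lambda)r(\lambda)$; the term $\int \pi_k r\,\mu_T(\sd\lambda)$ vanishes by orthogonality of $\pi_k$ to polynomials of degree $<k$, while $\int \lambda^{-1}\pi_k(\lambda)\,\mu_T(\sd\lambda)=c_k(0;\mu_T)$ directly from the definition of $c_k$ at $z=0$. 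This yields $\int \lambda^{-1}\pi_k^2\,\mu_T(\sd\lambda)=\pi_k(0)c_k(0;\mu_T)$ and hence the first formula.

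For the residual, I would use $\vec r_k = \vec b - W\vec x_k = W(\vec x-\vec x_k) = q_{\vec x_k}(W)\vec b$, so that by the spectral decomposition of $W$, $\|\vec r_k\|_2^2 = \int |q_{\vec x_k}(\lambda)|^2\,\mu_T(\sd\lambda) = \pi_k(0)^{-2}\int \pi_k(\lambda)^2\,\mu_T(\sd\lambda)$. Since $\pi_k = p_k/\ell_k$ with $p_k$ orthonormal with respect to $\mu_T$, the remaining integral equals $\ell_k^{-2}$; iterating the leading-coefficient relation $\ell_n = b_n\ell_{n+1}$ from $\ell_0=1$ gives $\ell_k^{-2}=\prod_{j=0}^{k-1}b_j(\mu_T)^2$, which is the second formula.

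The only step that is not pure bookkeeping is the identity $\int \lambda^{-1}\pi_k^2\,\mu_T(\sd\lambda)=\pi_k(0)c_k(0;\mu_T)$; once that is in place everything follows from the normalization $\pi_k=p_k/\ell_k$, the recursion $\ell_n=b_n\ell_{n+1}$, and the elementary fact that $q_{\vec x_k}(W)\vec b$ is simultaneously the residual and $W$ times the error. I expect no genuine obstacle beyond being careful about the degenerate cases ($\pi_k(0)\neq 0$, finiteness of $c_k(0)$, and the terminal steps $k\ge n$).
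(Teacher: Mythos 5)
Your proposal is correct and follows essentially the same route as the paper: for $\|\vec e_k\|_W^2$ you decompose $\lambda^{-1}\pi_k^2$ as $\pi_k(0)\lambda^{-1}\pi_k + \pi_k r$ with $\deg r = k-1$, kill the second term by orthogonality, and identify the first with $\pi_k(0)\,c_k(0;\mu_T)$, which is exactly the paper's computation; for $\|\vec r_k\|_2^2$ you use $\int \pi_k^2\,\mu_T = \ell_k^{-2} = \prod_{j<k} b_j^2$, again as in the paper. Your added remarks (that $\pi_k(0)\ne 0$ because the zeros lie in $\mathrm{supp}\,\mu_T \subset (0,\infty)$, and the explicit spectral step $\|\vec r_k\|_2^2 = \int |q_{\vec x_k}|^2\,\mu_T$) simply make explicit details the paper leaves implicit.
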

\begin{proof}
By orthogonality
\begin{align*}
\|\vec e_k\|_W^2  &=  \int_{\mathbb R}\frac{\pi_k(\lambda;\mu_{T})^2}{\lambda\pi_k(0;\mu_{T})^2} \mu_T(d \lambda) \\
&= \int_{\mathbb R}\frac{\pi_k(\lambda;\mu_{T}) \left( \pi_k(0;\mu_{T}) \lambda^{-1} +  \sum_{j=1}^k c_j \lambda^{k-1} \right) }{\pi_k(0;\mu_{T})^2} \mu_T(d \lambda)  =  \int_{\mathbb R}\frac{\pi_k(\lambda;\mu_{T})}{\lambda \pi_k(0;\mu_{T})} \mu_T(d \lambda)\\
& = \frac{c_k(0;\mu_{T})}{\pi_k(0;\mu_{T})}.
\end{align*}
For the $\vec r_k$ equation, by definition of the polynomials $\{p_n\}$, we have that
\begin{align}\label{eq:bs}
\int_{\mathbb R}{\pi_k(\lambda;\mu_{T})^2}\mu_T(d \lambda)
=\frac{1}{\ell_k^2}
\int_{\mathbb R}{p_k(\lambda;\mu_{T})^2}\mu_T(d \lambda)
=\frac{1}{\ell_k^2}
=\prod_{j=0}^{k-1} b_j(\mu_T)^2.
\end{align}
\end{proof}

\subsection{MINRES}

The MINRES algorithm, at iteration $k$ gives the solution of
\begin{align*}
    \vec x_k = \mathrm{argmin}_{\vec y \in \mathcal K_k} \|\vec b - W \vec y\|_2.
\end{align*}
More explicitly, the algorithm is given by:

\centerline{\noindent\fbox{%
\refstepcounter{alg}
    \parbox{0.9\textwidth}{%
\flushright \boxed{\text{Algorithm~\arabic{alg}: MINRES Algorithm for $W\vec x = \vec b$\label{a:minres}}}
\begin{enumerate}
    \item Suppose $W = W^* \in \mathbb C^{N \times N}$, $\epsilon > 0.$
    \item Set $\vec q_1 = \vec b/\|\vec b\|_2.$
    \item For $k = 1,2,\ldots,n$, $n \leq N$
    \begin{enumerate}
         \item Compute $\displaystyle a_{k-1} = (W \vec q_k - b_{k-2} \vec q_{k-1})^* \vec q_k$.
        \item Set $\vec v_k = W \vec q_k - a_{k-1} \vec q_k - b_{k-2} \vec q_{k-1}$.
        \item Compute $b_{k-1} = \|\vec v_k\|_2$ and if $b_{k-1} \neq 0$, set $\vec q_{k+1} = \vec v_k/b_{k-1}$.
        \item Form 
        \begin{align*}
 \tilde T_k = \begin{bmatrix} a_0 & b_0 \\
    b_0 & a_1 & \ddots \\
    & \ddots & \ddots & b_{k-2} \\
    & & b_{k-2} & a_{k-1}\\
    &&& b_{k-1}\end{bmatrix}.
    \end{align*}
        \item Compute $\vec z_k = \mathrm{argmin}_{\vec z \in \mathbb C^k} \| \tilde T_k \vec z - \|\vec b\|_2 \vec f_1 \|_2.$
        \item If $\| \tilde T_k \vec z_k - \|\vec b\|_2 \vec f_1 \|_2 < \epsilon$, return $\vec x_k = \begin{bmatrix} \vec q_1 & \cdots & \vec q_k \end{bmatrix} \vec z_k$.
    \end{enumerate}
\end{enumerate}
    }%
}}
\vspace{.05in}

Following the same prescription as in the previous section we are led to the problem of finding the polynomial $r_{\vec x_k}$ of degree less than or equal to $k$ satisfying $r_{\vec x_k}(0) = 1$ that minimizes
\begin{align*}
     \|\vec b - W \vec y\|_2^2 = \sum_{j=1}^N |r_{\vec y} (\lambda_j)|^2 |(U^*\vec b)_j|^2 = \int  |r_{\vec y} (\lambda)|^2 \mu_T( \sd \lambda), \quad T = T(W,\vec b),
\end{align*}
among all such polynomials. We then must have
\begin{align*}
    0 = \int r_{\vec x_k}(\lambda) \delta r_k(\lambda) \mu_T(\sd \lambda)
\end{align*}
for all polynomials $\delta r_k$ of degree less than or equal to $k$ with $\delta r_k(0) = 0$.  So, write $r_{x_k}(\lambda) = \sum_{j = 0}^{k} c_j p_j(\lambda;\mu_T)$.  And choosing $\delta r_k(\lambda) = p_\ell(\lambda;\mu_T) - p_\ell(0;\mu_T)$ we find
\begin{align*}
    0 = \int \left( \sum_{j = 0}^{k} c_j p_j(\lambda;\mu_T)\right) \left( p_\ell(\lambda;\mu_T) - p_\ell(0;\mu_T)\right) \mu_T(\sd \lambda) \Leftrightarrow c_\ell = p_\ell(0;\mu_T) c_0.
\end{align*}
From this, we obtain
\begin{align}\label{eq:rxk}
    r_{\vec x_k}(\lambda;\mu_T) = \frac{\sum_{j=0}^k p_j(0;\mu_T) p_j(\lambda;\mu_T) }{ \sum_{j=0}^k p_j^2(0;\mu_T)}.
\end{align}
\begin{proposition}\label{prop:MINRES}
  Let $\vec x_k$ be the computed solution at step $k$ of the MINRES algorithm applied to $W\vec x= \vec b$. For any $k \in \N$, with $T=T(W,\vec b)$
  \begin{align*}
      \|\vec r_k\|_2^2 &= \frac{1}{\sum_{j=0}^k p_j^2(0;\mu_T)}\\
      & = \frac{1}{b_k(\mu_T)^2 \left[ p_{k+1}'(0;\mu_T) p_k(0;\mu_T) - p_k'(0;\mu_T) p_{k+1}(0;\mu_T) \right]},\\
      & = \frac{\prod_{j=0}^{k-1} b_j(\mu_T)^2}{\pi_{k+1}'(0) \pi_k(0;\mu_T) - \pi_k'(0;\mu_T) \pi_{k+1}(0;\mu_T)}
  \end{align*}
\end{proposition}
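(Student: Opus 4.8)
The plan is to combine the explicit MINRES residual polynomial \eqref{eq:rxk} with two classical facts about the orthonormal polynomials $(p_j(\cdot;\mu_T))_{j\ge0}$ of the spectral measure $\mu_T$: their orthonormality in $L^2(\mu_T)$, and the Christoffel--Darboux identity. Write $p_j=p_j(\cdot;\mu_T)$, $\pi_j=\pi_j(\cdot;\mu_T)$, $b_j=b_j(\mu_T)$, and let $\ell_j=\ell_j(\mu_T)$ denote the leading coefficient of $p_j$. For the first identity, recall that $\|\vec r_k\|_2^2=\|\vec b-W\vec x_k\|_2^2=\int|r_{\vec x_k}(\lambda)|^2\,\mu_T(\sd\lambda)$ with $r_{\vec x_k}$ given by \eqref{eq:rxk}. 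Substituting that formula, expanding the square in the numerator, integrating term by term against $\mu_T$, and using $\int p_i(\lambda)p_j(\lambda)\,\mu_T(\sd\lambda)=\delta_{ij}$ kills the cross terms and collapses the double sum; dividing by the square of the denominator $\sum_{j=0}^k p_j(0)^2$ leaves $\|\vec r_k\|_2^2=\big(\sum_{j=0}^k p_j(0)^2\big)^{-1}$.

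For the second identity I would invoke Christoffel--Darboux for the family $(p_j)$. Starting from the symmetric three-term recurrence \eqref{eq:3term}, form $(x-y)p_j(x)p_j(y)$, substitute the recurrence in each argument so that the diagonal $a_j$-terms cancel, and sum over $j=0,\dots,k$; the off-diagonal $b_j$-terms telescope (with the convention $b_{-1}=0$) to give
\[
\sum_{j=0}^k p_j(x)p_j(y)=b_k\,\frac{p_{k+1}(x)p_k(y)-p_k(x)p_{k+1}(y)}{x-y}.
\]
Letting $y\to x$ (differentiate numerator and denominator once in $y$) yields the confluent form, and evaluating at $x=0$ gives $\sum_{j=0}^k p_j(0)^2=b_k\big[p_{k+1}'(0)p_k(0)-p_k'(0)p_{k+1}(0)\big]$; reciprocating produces the second displayed formula.

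For the third identity I would pass to the monic polynomials through $p_j=\ell_j\pi_j$. Since evaluation and differentiation at $0$ are linear, $p_{k+1}'(0)p_k(0)-p_k'(0)p_{k+1}(0)=\ell_{k+1}\ell_k\big[\pi_{k+1}'(0)\pi_k(0)-\pi_k'(0)\pi_{k+1}(0)\big]$. The relation $\ell_j=b_j\ell_{j+1}$ from Section~\ref{sec:OPs} gives $b_k\ell_{k+1}\ell_k=\ell_k^2$, and $\ell_k^2=\big(\prod_{j=0}^{k-1}b_j^2\big)^{-1}$ is exactly the identity \eqref{eq:bs}; substituting turns the second formula into the third.

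The argument has no genuine obstruction: it is bookkeeping on top of \eqref{eq:rxk}. The one place to be careful is the Christoffel--Darboux constant -- it should appear as a single power of $b_k$, the off-diagonal Jacobi entry at the truncation level -- together with consistently tracking the leading-coefficient factors $\ell_j$ through the orthonormal-to-monic conversion; a stray factor of $b_k$ is the most likely slip. If one prefers not to quote Christoffel--Darboux, the confluent identity at $0$ can instead be read off from the recurrence for $\pi_n$ in Section~\ref{sec:OPs} and its derivative, but the telescoping route is shorter.
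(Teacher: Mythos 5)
Your plan mirrors the paper's proof exactly: integrate \eqref{eq:rxk} against $\mu_T$ and use orthonormality to kill the cross terms (first identity), invoke the confluent Christoffel--Darboux formula at $x=0$ (second identity), and pass to monic polynomials via $p_j = \ell_j\pi_j$ together with $\ell_j = b_j\ell_{j+1}$ and $\ell_k^2 = \prod_{j=0}^{k-1} b_j^{-2}$ (third identity). So in terms of structure and key lemmas, this is the same argument.

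One thing you should have pushed harder on, rather than leaving it as a soft aside: your telescoping derivation correctly gives the Christoffel--Darboux constant as a \emph{single} power, $\sum_{j=0}^k p_j(0)^2 = b_k\bigl[p_{k+1}'(0)p_k(0) - p_k'(0)p_{k+1}(0)\bigr]$, yet the second displayed line of the proposition (and the paper's own proof, which writes $\ell_k/\ell_{k+1} = b_k^2$) has $b_k^2$. These are incompatible: since $\ell_n = b_n\ell_{n+1}$ gives $\ell_k/\ell_{k+1} = b_k$, only the single power of $b_k$ is consistent with the first and third displayed lines, as your own bookkeeping for the third identity confirms ($b_k\ell_k\ell_{k+1} = \ell_k^2 = \prod_{j=0}^{k-1} b_j^{-2}$; if one instead carries $b_k^2$ the factors do not cancel). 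So you should not say that ``reciprocating produces the second displayed formula'' -- what your derivation produces is the corrected version of it, and the $b_k^2$ as printed is a typo. Calling that out explicitly, rather than hedging with ``a stray factor of $b_k$ is the most likely slip,'' would make the write-up cleaner.
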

\begin{proof}
Integrating \eqref{eq:rxk}
\begin{align*}
    \|\vec b - W \vec x_k\|_2^2 = \frac{1}{\sum_{j=0}^k p_j^2(0;\mu_T)}.
\end{align*}
Employing the Christoffel-Darboux formula,
\begin{align*}
    \sum_{j=0}^k p_j^2(0;\mu_T) &= \frac{\ell_k}{\ell_{k+1}} \left[ p_{k+1}'(0;\mu_T) p_k(0;\mu_T) - p_k'(0;\mu_T) p_{k+1}(0;\mu_T) \right] \\
    &= b_k(\mu_T)^2 \left[ p_{k+1}'(0;\mu_T) p_k(0;\mu_T) - p_k'(0;\mu_T) p_{k+1}(0;\mu_T) \right].
\end{align*}
Then using \eqref{eq:bs}
\begin{align*}
    p_k(\lambda;\mu_T) =\left( \prod_{j=0}^{k-1} b_j(\mu_T)^{-1} \right)\pi_k(\lambda;\mu_T)
\end{align*}
we find the alternate expression
\begin{align*}
    \sum_{j=0}^k p_j^2(0;\mu_T) = \left( \prod_{j=0}^{k-1} b_j(\mu_T)^{-2} \right)\left[ \pi_{k+1}'(0;\mu_T) \pi_k(0;\mu_T) - \pi_k'(0;\mu_T) \pi_{k+1}(0;\mu_T) \right].
\end{align*}
\end{proof}

\subsection{The CGA on the normal equations}

Next, for $X \in \mathbb C^{N\times M}$, $N \leq M$, consider solving the normal equations $XX^* \vec x = X \vec b$ with the CGA. The appearance of $X$ on the right-hand side changes the minimization problem one has to consider.  With $W = XX^*$, the CGA will solve
\begin{align*}
    \vec x_k = \mathrm{argmin}_{\vec y \in \mathcal K_k} \|\vec x - \vec y\|_W, \quad \mathcal K_k = \span\{X \vec b, W X \vec b, \ldots, W^{k-1} X \vec b\}.
\end{align*}
As before, we express
\begin{align*}
    \vec x - \vec y = W^{-1} q_{\vec y}(W) X \vec b.
\end{align*}
Using the singular value decomposition $ X = U \Sigma V^*$ where $U,V$ are square matrices, we write
\begin{align*}
    \| \vec x - \vec y \|_W^2 &= \vec b^* V \Sigma^* U^* q_{\vec y}(W)^*W^{-1} q_{\vec y}(W) U \Sigma V^* \vec b,\\
    & = \vec b^* V \Sigma^* q_{\vec y}(\Lambda)^*\Lambda^{-1} q_{\vec y}(\Lambda) \Sigma V^* \vec b
\end{align*}
where $\Lambda = \Sigma \Sigma^*$.  Since $\Sigma$ has its last $M -N$ columns being identically zero, we use the notation $\Sigma = \begin{bmatrix} \Sigma_0 & 0 \end{bmatrix}$ and find $\Lambda = \Sigma_0^2.$  Thus
\begin{align*}
    \| \vec x - \vec y \|_W^2 = \vec c^* \Sigma_0 q_{\vec y}(\Lambda)^*\Lambda^{-1} q_{\vec y}(\Lambda) \Sigma_0 \vec c, \quad \vec c = \begin{bmatrix} I & 0 \end{bmatrix} V^* \vec b.
\end{align*}
The techniques used in the case of MINRES directly apply.
\begin{proposition}\label{prop:CGANE}
  Let $\vec x_k$ be the computed solution at step $k$ of applying the CGA to the normal  equations $XX^*\vec x= X\vec b$, $X \in \mathbb C^{N \times M}$, $N \leq M$. For any $k \in \N$,
  \begin{align*}
      \|\vec e_k\|_W^2 & = \frac{\prod_{j=0}^{k-1} b_j(\nu)^2}{\pi_{k+1}'(0;\nu) \pi_k(0;\nu) - \pi_k'(0;\nu) \pi_{k+1}(0;\nu)} = \frac{1}{\sum_{j=0}^k p_j^2(0;\nu)},
  \end{align*}
  where
  \begin{align}\label{eq:nu}
      \nu = \sum_{j=1}^N \omega_j \delta_{\lambda_j},  \quad \omega_j = |(V^*\vec b)_j|^2,
  \end{align}
  $X = U \Sigma V^*$ is the singular value decomposition of $X$ and $\lambda_1,\ldots,\lambda_N$ are the eigenvalues of $XX^*$.
\end{proposition}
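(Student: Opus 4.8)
The plan is to recognize that, once the singular-value reduction carried out above is in place, the quantity $\|\vec e_k\|_W^2$ is governed by precisely the polynomial minimization problem already solved in the proof of Proposition~\ref{prop:MINRES}, with the measure $\mu_T$ replaced by the measure $\nu$ of \eqref{eq:nu}. First I would recall that as $\vec y$ ranges over $\mathcal K_k$, the residual polynomial $q_{\vec y}$ (with $\vec x - \vec y = W^{-1} q_{\vec y}(W) X \vec b$) ranges over exactly the polynomials of degree at most $k$ with $q_{\vec y}(0) = 1$. Substituting into the identity $\|\vec x - \vec y\|_W^2 = \vec c^* \Sigma_0 q_{\vec y}(\Lambda)^* \Lambda^{-1} q_{\vec y}(\Lambda) \Sigma_0 \vec c$ obtained above, and using that all the matrices involved are diagonal together with $\Lambda = \Sigma_0^2$ (so that $\Sigma_0 \Lambda^{-1} \Sigma_0 = I$), the weight $\Lambda^{-1}$ collapses and one is left with
\[
\|\vec x - \vec y\|_W^2 = \sum_{j=1}^N |(V^* \vec b)_j|^2\, |q_{\vec y}(\lambda_j)|^2 = \int |q_{\vec y}(\lambda)|^2 \, \nu(\sd \lambda).
\]

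From here the argument is a transcription of the proof of Proposition~\ref{prop:MINRES}: one minimizes $\int |q(\lambda)|^2 \nu(\sd\lambda)$ over polynomials $q$ of degree at most $k$ with $q(0) = 1$. Expanding the minimizer in the orthonormal polynomials $\{p_j(\,\cdot\,;\nu)\}_{j=0}^k$ and imposing the normalization shows that the optimal polynomial is $\bigl(\sum_{j=0}^k p_j(0;\nu) p_j(\,\cdot\,;\nu)\bigr)\big/\bigl(\sum_{j=0}^k p_j^2(0;\nu)\bigr)$, so that $\|\vec e_k\|_W^2 = 1/\sum_{j=0}^k p_j^2(0;\nu)$. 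The Christoffel--Darboux formula evaluated at $0$ rewrites the denominator as a multiple of $p_{k+1}'(0;\nu)p_k(0;\nu) - p_k'(0;\nu)p_{k+1}(0;\nu)$, and passing from orthonormal to monic polynomials via $p_j(\,\cdot\,;\nu) = \bigl(\prod_{i=0}^{j-1} b_i(\nu)^{-1}\bigr)\pi_j(\,\cdot\,;\nu)$ yields the stated closed form, with numerator $\prod_{j=0}^{k-1} b_j(\nu)^2$ and denominator $\pi_{k+1}'(0;\nu)\pi_k(0;\nu) - \pi_k'(0;\nu)\pi_{k+1}(0;\nu)$.

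I do not expect a genuine obstacle here; the only points that need care are the standard non-degeneracy issues. One needs $W = XX^*$ invertible so that the exact solution $\vec x$ exists and $0 \notin \operatorname{supp}\nu$ --- automatic for $N \le M$ with $X$ of full rank $N$, which is the natural hypothesis --- and one needs $\nu$ to possess at least $k+1$ atoms so that $p_0(\,\cdot\,;\nu),\dots,p_k(\,\cdot\,;\nu)$ exist and the denominators in the closed forms are nonzero; this is exactly the condition that the CGA applied to the normal equations has not terminated before step $k$. It is also worth noting that $\nu$ need not be a probability measure, since its total mass is $\sum_{j=1}^N |(V^*\vec b)_j|^2 \le \|\vec b\|_2^2$, but neither the minimization argument nor the orthogonal-polynomial identities use unit mass, so the proof of Proposition~\ref{prop:MINRES} applies verbatim with $\mu_T$ replaced by $\nu$.
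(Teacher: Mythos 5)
Your proof is correct and follows precisely the route the paper intends: after the singular-value reduction gives $\|\vec x - \vec y\|_W^2 = \int |q_{\vec y}(\lambda)|^2\,\nu(\sd\lambda)$, the paper simply remarks ``The techniques used in the case of MINRES directly apply,'' which is exactly the transcription you carry out. Your additional remarks on non-degeneracy and on $\nu$ not being a probability measure are accurate and fill in details the paper leaves implicit.
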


\section{Universality}\label{sec:univ}

\subsection{Bidiagonal central limit theorem, Gaussian case}


Throughout the asymptotic analysis that follows $d$ will be a fixed positive real number and $ \mfd = N/M \To[M] d$.
Taking the entrywise limit in \eqref{eq:GK}, using the notation
\begin{align}
    \frac{1}{\sqrt{\beta M}} X_{\mathrm {GK}} &= \begin{bmatrix} H & 0\end{bmatrix},\notag\\
    H & \lawequals \frac{1}{\beta M}\left[\begin{array}{ccccc} \chi_{\beta M}  \\
    \chi_{\beta(N-1)} & \chi_{\beta(M-1)}  \\
    & \chi_{\beta(N-2)} & \chi_{\beta(M-2)}  \\
    && \ddots & \ddots \\
    &&& \chi_{\beta} & \chi_{\beta(M - N + 1)} \end{array}\right], \label{eq:dist_H}
\end{align}
it follows that
\begin{align*}
    H \Wkto[N] \mathbb H_{d} = \begin{bmatrix} 1 & \\
    \sqrt{d} & 1\\
    & \sqrt{d} & 1\\
     && \ddots & \ddots
    \end{bmatrix}.
\end{align*}
This limit is in the sense of weak convergence of the finite-dimensional marginals of a random infinite bidiagonal matrix.

Furthermore, for a $\chi_k$ random variable
\[
{\chi_k} - \sqrt{k} \Wkto[k] \mathcal N_1 (0,1/2),
\]
and so by independence, for iid standard normals $\{Z_j\}_{1}^\infty,$
\begin{equation}\label{eq:TriCLT}
    \sqrt{2\beta M}(H - \mathbb{H}_{\mfd})
    \Wkto[N] \mathbb{G} = 
    \begin{bmatrix} Z_1 & \\
    Z_2 & Z_3\\
    & Z_4 & Z_5\\
     && \ddots & \ddots
    \end{bmatrix}.
\end{equation}
From here, it follows immediately that the Jacobi matrix produced by the Lanczos algorithm applied to $\mathcal W_{\beta}(N,M)$ has a limit, in the same sense of finite-dimensional marginal convergence, to an infinite tridiagonal matrix. 

\begin{definition}
Given a positive-definite Jacobi matrix $T$ we define $\varphi$ to be the function that gives the Cholesky factorization of $T$.  That is $\varphi(T) = H$ where $H$ is a lower-triangular bidiagonal matrix with all non-negative entries and $HH^* = T$.
\end{definition}
The Cholesky factorization $\varphi(T)$ is unique for $T > 0$ and $\varphi$ is generically differentiable (see \cite{Edelman2005}). The actual algorithm to compute it is given as follows:\\

\centerline{\noindent\fbox{%
\refstepcounter{alg}
    \parbox{0.9\textwidth}{%
\flushright \boxed{\text{Algorithm~\arabic{alg}: Jacobi matrix Cholesky factorization \label{a:chol}}}
\begin{enumerate}
    \item Suppose $T$ is an $N \times N$ positive-definite Jacobi matrix, set $H = T.$
    \item For $k = 1,2,\ldots,N-1$
    \begin{enumerate}
        \item Set $H_{k+1,k+1} = H_{k+1,k+1} - \displaystyle\frac{H_{k+1,k}^2}{H_{kk}}.$
        \item Set $H_{k:k+1,k} = H_{k:k+1,k}/\sqrt{H_{k,k}}.$
    \end{enumerate}
    \item Set $H_{N,N} = \sqrt{H_{N,N}}.$
    \item  Return $\varphi(T) = H.$
\end{enumerate}
    }%
}}
\vspace{.05in}

The following is immediate.

\begin{proposition}\label{prop:GaussianBidiagonalLimit}
Let $W \lawequals \mathcal W_{\beta}(N,M)$, $N \leq M$.  For any sequence of unit vectors $\vec b= \vec b_N$ of length $N,$
\[
\sqrt{2\beta M}(\varphi(T(W, \vec b_N)) - \mathbb H_{\mfd}) \Wkto[N] \mathbb G.
\]
\end{proposition}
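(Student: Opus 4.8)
The plan is to reduce the statement entirely to Theorem~\ref{t:lanczos} and the bidiagonal CLT \eqref{eq:TriCLT}, using the fact that the Cholesky map $\varphi$ is continuous (indeed generically differentiable) on positive-definite Jacobi matrices. First I would invoke Theorem~\ref{t:lanczos}: since $W \lawequals \mathcal W_\beta(N,M)$ is invariant under conjugation by orthogonal (resp.\ unitary) matrices and $\vec b_N$ is a unit vector, the distribution of $T(W,\vec b_N)$ does not depend on $\vec b_N$, so in law we may replace $\vec b_N$ by $\vec f_1$. Then $T(W,\vec f_1)$ is precisely the Householder tridiagonalization of $W$, which by the Golub--Kahan bidiagonalization \eqref{eq:GK} satisfies $T(W,\vec f_1) \lawequals \beta X_{\mathrm{GK}} X_{\mathrm{GK}}^T \lawequals (\sqrt{\beta M}\, H)(\sqrt{\beta M}\, H)^T = \beta M\, H H^T$, where $H$ is the lower bidiagonal matrix of independent scaled $\chi$ variables in \eqref{eq:dist_H}. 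Since $H$ has nonnegative entries, $H$ is exactly the Cholesky factor of $HH^T$ (Cholesky factors being unique), and after rescaling $\varphi(T(W,\vec f_1)) \lawequals \sqrt{\beta M}\, H$... wait, I need to be careful with the normalization: with the convention of \eqref{eq:dist_H} the correct identification is $\varphi(T(W,\vec b_N)) \lawequals H$ as defined there, so that \eqref{eq:TriCLT} applies directly.

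The second step is then purely a restatement: \eqref{eq:TriCLT} already asserts $\sqrt{2\beta M}(H - \mathbb{H}_{\mfd}) \Wkto[N] \mathbb{G}$ in the sense of finite-dimensional marginals, which is exactly the claimed conclusion once we substitute the law identity $\varphi(T(W,\vec b_N)) \lawequals H$. So the proposition follows by combining (i) the $\vec b$-independence and tridiagonalization identification from Theorem~\ref{t:lanczos}, (ii) the uniqueness of the Cholesky factorization to identify $\varphi(T)$ with the bidiagonal matrix $H$ from the Golub--Kahan decomposition, and (iii) the entrywise $\chi_k - \sqrt{k} \Wkto[k] \mathcal N_1(0,1/2)$ limit together with independence of the entries of $H$.

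I expect the only genuine subtlety — which is why the paper labels this "immediate" — to be bookkeeping of constants and degrees of freedom: matching the indices $\beta M, \beta(N-1), \beta(M-1), \ldots$ of the $\chi$ variables in \eqref{eq:GK}/\eqref{eq:dist_H} against the limiting matrix $\mathbb{H}_d$ with its $1$'s on the diagonal and $\sqrt{d}$'s on the subdiagonal, and checking that $\sqrt{k/(\beta M)} \to 1$ and $\to \sqrt{d}$ respectively along the diagonal and subdiagonal as $\mfd \to d$, while the fluctuation scaling $\sqrt{2\beta M}$ is the one that turns each $\chi_k - \sqrt{k}$ into a standard normal (using $\sqrt{2}(\chi_k - \sqrt k) \Wkto \mathcal N_1(0,1)$ and the extra $\sqrt{\beta M}$ absorbing the $1/(\beta M)$ rescaling in $H$). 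There is also a minor point that for each fixed finite collection of matrix entries we only need finitely many of the independent $\chi$ variables, so the finite-dimensional marginal convergence is just a finite-dimensional CLT for independent variables — no uniformity across the growing dimension is required. Once these normalizations are confirmed, the proof is a direct chain of law-equalities and the cited weak convergence, with no estimates to grind through.
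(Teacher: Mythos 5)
Your argument is exactly the one the paper has in mind when it calls the proposition ``immediate'' after displaying \eqref{eq:TriCLT}: combine the $\vec b$-independence from Theorem~\ref{t:lanczos} (so $T(W,\vec b_N)\lawequals HH^T$ with $H$ as in \eqref{eq:dist_H}), uniqueness of Cholesky to get $\varphi(T(W,\vec b_N))\lawequals H$, and then apply \eqref{eq:TriCLT} entrywise. Your self-correction on the normalization lands in the right place, and the finite-dimensional-marginals observation is the correct way to finish, so the proof is correct and matches the paper's approach.
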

Now, define
\begin{align*}
    \mathbb H_d \mathbb H_d^* = \mathbb T_d := \begin{bmatrix} 1 & \sqrt{d} \\
    \sqrt{d} & 1 + d & \sqrt{d}\\
    & \sqrt{d} & 1+ d  & \ddots&\\
     && \ddots & \ddots
    \end{bmatrix}.
\end{align*}

\begin{proposition}\label{prop:GaussianPolyFCLT}
  Let $W \lawequals \mathcal W_{\beta}(N,M)$ for $N\leq M$ where $\mfd  \To[M] d \in (0,1]$.   Then for any sequence of unit vectors $\vec b= \vec b_N$ of length $N,$ with $T=T(W,\vec b),$ the vector
  \begin{align*}
     \left( \sqrt{\beta M}\vec f_1^* (T^k - \mathbb{T}_{\mfd}^k) \vec f_1 \right)_{k \geq 1} = \left( \sqrt{\beta M}\int_{\mathbb R} x^k (\mu_T(\sd x) - \varrho_{\mfd}(\sd x))\right)_{k \geq 1},
  \end{align*}
  converges in the sense of finite-dimensional marginals to a centered Gaussian random vector $\mathcal G = (G_1)_{k\geq 1}$.
\end{proposition}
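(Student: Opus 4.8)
The plan is to obtain Proposition~\ref{prop:GaussianPolyFCLT} as the image of the Gaussian bidiagonal central limit theorem (Proposition~\ref{prop:GaussianBidiagonalLimit}) under the polynomial ``moment maps'' $H \mapsto \vec f_1^*(HH^*)^k\vec f_1$. By Theorem~\ref{t:lanczos} the law of $T = T(W,\vec b)$ does not depend on the unit vector $\vec b$, so we may pass to $H := \varphi(T)$, the lower-bidiagonal Cholesky factor of $T$. By \eqref{eq:GK}--\eqref{eq:dist_H}, $H$ has mutually independent entries, each a rescaled chi random variable, and Proposition~\ref{prop:GaussianBidiagonalLimit} gives
\[
\sqrt{2\beta M}\,(H - \mathbb{H}_\mfd) \Wkto[M] \mathbb{G}
\]
in the sense of convergence of finite-dimensional marginals, where $\mathbb{G}$ is bidiagonal with i.i.d.\ $\mathcal N_1(0,1)$ entries in its nonzero positions. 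Since $T = HH^*$ is tridiagonal, for each fixed $k$ the functional $\vec f_1^* T^k \vec f_1 = \int x^k \mu_T(\sd x)$ (cf.\ \eqref{eq:TriMoment}) is a fixed polynomial $P_k$ in the finitely many entries $H_{ij}$ with $1\le i,j\le k+1$, and $P_k(\mathbb{H}_\mfd) = \vec f_1^*\mathbb{T}_\mfd^k\vec f_1 = \int x^k\varrho_\mfd(\sd x)$, because $\mathbb{T}_\mfd = \mathbb{H}_\mfd\mathbb{H}_\mfd^*$ is the Jacobi matrix of the Marchenko--Pastur law $\varrho_\mfd$.

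Given this, the argument is a finite-dimensional delta method. Fix $K$; the map $H \mapsto (P_1(H),\ldots,P_K(H))$ factors through the $(K+1)\times(K+1)$ leading block of $H$, to which the displayed finite-dimensional convergence applies. Since that block converges in probability to the corresponding block of $\mathbb{H}_\mfd$ and its $\sqrt{2\beta M}$-rescaled fluctuation converges weakly to $\mathbb{G}$, and each $P_k$ is smooth, a first-order Taylor expansion about $\mathbb{H}_\mfd$ together with Slutsky's theorem and the continuous mapping theorem yields
\[
\left(\sqrt{2\beta M}\,\vec f_1^*(T^k - \mathbb{T}_\mfd^k)\vec f_1\right)_{1\le k\le K} \Wkto[M] \left(\langle \nabla P_k(\mathbb{H}_\mfd),\, \mathbb{G}\rangle\right)_{1\le k\le K}.
\]
The right-hand side is a linear image of the centered Gaussian array $\mathbb{G}$, hence a centered Gaussian vector; dividing through by $\sqrt 2$ gives the claim with $G_k := 2^{-1/2}\langle \nabla P_k(\mathbb{H}_\mfd),\mathbb{G}\rangle$. (The covariance of $\mathcal G$ can be read off explicitly from the gradients $\nabla P_k(\mathbb{H}_\mfd)$ if one wishes, but the proposition only asserts it is centered Gaussian.)

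I do not expect a genuine obstacle: everything substantive is already contained in Proposition~\ref{prop:GaussianBidiagonalLimit}, and the present statement is essentially its pushforward under the $P_k$. The only point deserving care is that the Taylor remainder be negligible at scale $\sqrt{M}$. Writing $P_k(H) = P_k(\mathbb{H}_\mfd) + \langle\nabla P_k(\mathbb{H}_\mfd), H - \mathbb{H}_\mfd\rangle + R_k$ with $|R_k|\le C_k\|H - \mathbb{H}_\mfd\|^2$ on $\{\|H - \mathbb{H}_\mfd\|\le 1\}$, tightness of $\sqrt{2\beta M}\,\|H - \mathbb{H}_\mfd\|$ forces $\sqrt{M}\,R_k = o_P(1)$ there, while the complementary event has probability $O(M^{-p})$ for every $p$ by the uniform moment bounds on the chi variables, so its contribution vanishes in $L^1$ after multiplication by $\sqrt{M}$; equivalently one simply invokes the multivariate delta method, whose hypotheses---convergence in probability to the base point together with differentiability there---are exactly what has been verified. (For fixed $k$ and $M$ large enough that $N > k$, Theorem~\ref{t:lanczos} ensures the Lanczos iteration reaches step $k+1$, so the entries $H_{ij}$, $i,j\le k+1$, are all defined.)
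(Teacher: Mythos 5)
Your proposal is correct and takes the same route as the paper, which in its own proof simply cites \eqref{eq:TriMoment} for the equality of the two expressions and then appeals to the bidiagonal CLT \eqref{eq:TriCLT} (equivalently Proposition~\ref{prop:GaussianBidiagonalLimit}) plus the observation that each moment $\vec f_1^* T^k \vec f_1$ depends on only finitely many entries of $T$; you have merely spelled out the finite-dimensional delta-method step that the paper leaves implicit. The only cosmetic imprecision is that since the centering $\mathbb H_\mfd$ moves with $M$, the limiting linear functional should be written as $\langle \nabla P_k(\mathbb H_d), \mathbb G\rangle$ rather than $\langle \nabla P_k(\mathbb H_\mfd), \mathbb G\rangle$, with Slutsky supplying the replacement $\nabla P_k(\mathbb H_\mfd)\to\nabla P_k(\mathbb H_d)$; this does not affect the argument.
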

\begin{proof}
The equality follows using \eqref{eq:TriMoment}.  The proposition then follows using \eqref{eq:TriCLT} because, for each $k$, $\sqrt{M}\vec f_1^* (T^k - \mathbb{T}_\mfd^k) \vec f_1$ depends only on a finite number of elements of $T$.
\end{proof}

\subsection{Contour integral reformulation of the moments}
Let $\Gamma$ be a simple curve that encloses the nonzero spectrum of a symmetric tridiagonal matrix $T$.  Then
\begin{align*}
    m_k(\mu_T) = \frac{1}{2 \pi i} \oint_{\Gamma} z^k c_0(z;\mu_T) \sd z.
\end{align*}
Now, let $\Gamma=\Gamma_d$ be a smooth simple contour that properly encloses the support of the Marchenko--Pastur law \eqref{eq:MP}.  

We denote the Stieltjes transform $s_d(z)$ of \eqref{eq:MP} by
\begin{equation}
  s_d(z) 
  = \int_\R \frac{ \varrho_d(\sd \lambda)}{\lambda-z}.
  \label{eq:smp}
\end{equation}
There are many classical references for the following result.
\begin{theorem}[Global eigenvalue bounds, see, e.g. \cite{Davidson2001,Geman1980,Silverstein1985,Vershynin2009}]\label{t:geb}
For the eigenvalues $\lambda_N \leq \cdots \leq \lambda_1$ of $W \lawequals \mathcal W_{\beta}(N,M)$, $N \leq M$ and $t > 0$
\begin{align*}
    \mathbb P \left( 1 - \sqrt{\frac{N}{M}} - t \leq \lambda_N^{1/2} \leq \lambda_1^{1/2} \leq 1 + \sqrt{\frac{N}{M}} + t\right) \To[M] 0.
\end{align*}
\end{theorem}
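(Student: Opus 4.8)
The statement should read ``$\To[M] 1$'' --- equivalently, that the probability of the complementary event tends to $0$ --- and it is this that I would establish. The plan is to pass to singular values: writing $A = X/\sqrt M$ (in the real case $A$ has iid $\mathcal N(0,1/M)$ entries; in the complex case the real and imaginary parts are iid $\mathcal N(0,1/(2M))$), one has $W = A A^*$, so $\lambda_1(W)^{1/2} = \sigma_{\max}(A)$ and $\lambda_N(W)^{1/2} = \sigma_{\min}(A)$ are the extreme singular values of the $N \times M$ matrix $A$. It then suffices to bound $\mathbb P(\sigma_{\max}(A) > 1 + \sqrt{N/M} + t)$ and $\mathbb P(\sigma_{\min}(A) < 1 - \sqrt{N/M} - t)$ separately and show each is $o(1)$.

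First I would record the expectation bounds $\mathbb E\, \sigma_{\max}(A) \le 1 + \sqrt{N/M}$ and $\mathbb E\, \sigma_{\min}(A) \ge 1 - \sqrt{N/M}$. These are the classical Davidson--Szarek estimates; the clean route is Gordon's Gaussian min--max comparison inequality applied to the bilinear Gaussian process $(\vec u, \vec v) \mapsto \langle \vec u, A \vec v\rangle$ over the product of unit spheres, using $\sigma_{\max}(A) = \max_{\vec u}\max_{\vec v}\langle \vec u, A\vec v\rangle$ and $\sigma_{\min}(A) = \min_{\vec u}\max_{\vec v}\langle \vec u, A \vec v\rangle$ (the latter representation being valid precisely because $N \le M$), and in the complex case first splitting into real and imaginary parts. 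Alternatively --- and this alone finishes the proof --- one can invoke the Bai--Yin theorem, which gives a.s.\ convergence $\sigma_{\max}(A) \to 1 + \sqrt d$ and $\sigma_{\min}(A) \to 1 - \sqrt d$ (finite fourth moments suffice, so the Gaussian ensembles are covered, and the smallest-singular-value statement holds for all $d \in (0,1]$); combined with $\sqrt{N/M}\to\sqrt d$ this immediately shows that the event in question holds for all large $M$ almost surely, hence with probability tending to $1$.

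For a quantitative version I would instead add the Gaussian concentration inequality. As functions of the $\beta N M$ iid standard real Gaussian coordinates underlying $X$, both $\sigma_{\max}(A)$ and $\sigma_{\min}(A)$ are Lipschitz with constant $(\beta M)^{-1/2}$ --- singular values are $1$-Lipschitz in the Frobenius norm, and the entries of $A$ carry the factor $(\beta M)^{-1/2}$ --- so Gaussian concentration yields deviations of order $e^{-c \beta M s^2}$ about the mean. Taking $s = t$ and combining with the expectation bounds gives $\mathbb P(\sigma_{\max}(A) > 1 + \sqrt{N/M} + t) \le 2 e^{-c\beta M t^2}$ and $\mathbb P(\sigma_{\min}(A) < 1 - \sqrt{N/M} - t) \le 2 e^{-c \beta M t^2}$, both tending to $0$ as $M\to\infty$ for every fixed $t > 0$.

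The only point that is not entirely routine is the lower bound $\mathbb E\,\sigma_{\min}(A) \ge 1 - \sqrt{N/M}$: this is genuinely a Gaussian comparison (Slepian/Gordon) fact rather than something obtainable by a union bound or an $\varepsilon$-net argument, and one must be careful to use the min--max (not max--min) representation of $\sigma_{\min}$ so that Gordon's inequality applies with the correct sign. The complex case adds only the bookkeeping of passing through real and imaginary parts. Everything else --- the concentration step and the deduction --- is standard, which is why the theorem can simply be quoted from the literature.
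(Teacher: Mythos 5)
The proposal is correct, and you have also caught a genuine typo in the paper: the displayed probability is that of the \emph{good} event (both extreme eigenvalues near where they should be), and it plainly tends to $1$, not $0$ — the paper even uses it that way two lines later (``Hence with probability tending to $1$ \dots the support of $\mu_T$ \dots is contained within $\Gamma_d$''). The paper gives no proof, only the citations to Davidson--Szarek, Geman, Silverstein, and Vershynin; your argument is exactly the content of those references. The Gordon min--max route for $\mathbb E\,\sigma_{\max}$ and $\mathbb E\,\sigma_{\min}$ combined with Gaussian concentration (with the correct Lipschitz constant $(\beta M)^{-1/2}$ in both the real and complex cases) is the Davidson--Szarek/Vershynin proof, while the almost-sure alternative via Geman and Silverstein (Bai--Yin) gives the qualitative statement directly; either suffices for what the paper actually uses. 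The one point worth being careful about — that the $\min$-$\max$ (not $\max$-$\min$) representation of $\sigma_{\min}$ is needed for Gordon's inequality, and that it requires $N \le M$ — you have stated correctly. No gap.
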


\noindent Hence with probability tending to $1$ as $M \to \infty$, $\mfd \To[M] d$, the support of $\mu_T$, $T = T(W,\vec b)$ is contained within $\Gamma_d$.
As a corollary, we have:
\begin{corollary}\label{cor:GaussianPolyFCLT}
  Let $W \lawequals \mathcal W_{\beta}(N,M)$ for $N \leq M$ where $\mfd \To[M] d \in (0,1]$.  Then for any sequence of unit vectors $\vec b= \vec b_N$ of dimension $N,$ with $T=T(W,\vec b),$ the vector
  \begin{align*}
  \left( \frac{\sqrt{M}}{2 \pi i}\oint_{\Gamma_d} z^k (c_0(z; \mu_T) - s_\mfd(z))\sd z\right)_{k \geq 1}
  \Wkto[N] \mathcal{G},
  \end{align*}
  in the sense of finite-dimensional marginals, where $\mathcal{G}$ is the same process as in Proposition \ref{prop:GaussianPolyFCLT}.
\end{corollary}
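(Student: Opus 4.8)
The plan is to recognize that, on an event of probability tending to one, the contour-integral vector is just a coordinatewise rescaling of the moment-difference vector already controlled by Proposition~\ref{prop:GaussianPolyFCLT}, so that the corollary follows from that proposition together with Theorem~\ref{t:geb} and the contour reformulation of the moments. First I would fix the contour once and for all: choose $\Gamma_d$ to strictly (``properly'') enclose $\operatorname{supp}\varrho_d$, and in the boundary case $d=1$ choose it to encircle an open neighborhood of $[0,\gamma_+]$ containing the origin. Since $\mfd\To[M] d$ and, for $N\le M$, $\varrho_{\mfd}$ carries no atom at $0$, its support lies inside $\Gamma_d$ for all large $M$; and by Theorem~\ref{t:geb} (with the remark following it) together with $W=XX^*/M\ge 0$, the event $E_M:=\{\operatorname{spec}(\mu_T)\subset\operatorname{int}\Gamma_d\}$, $T=T(W,\vec b)$, satisfies $\Prob(E_M)\to 1$.

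On $E_M$, the contour-integral reformulation of the moments stated above (with $c_0(\cdot;\mu_T)=\int(\lambda-\cdot)^{-1}\mu_T(\sd\lambda)$ since $\pi_0\equiv1$) gives $\frac{1}{2\pi i}\oint_{\Gamma_d}z^k c_0(z;\mu_T)\,\sd z=m_k(\mu_T)$ for each fixed $k\ge1$; the identical computation applied to the deterministic pair $(\varrho_{\mfd},s_{\mfd})$ — the interchange of the contour and spectral integrals being justified by boundedness of $(z-\lambda)^{-1}$ on $\Gamma_d\times\operatorname{supp}\varrho_{\mfd}$ — gives $\frac{1}{2\pi i}\oint_{\Gamma_d}z^k s_{\mfd}(z)\,\sd z=m_k(\varrho_{\mfd})$ for all large $M$. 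Subtracting and using \eqref{eq:TriMoment}, on $E_M$ and for every $k\ge1$,
\[
\frac{\sqrt M}{2\pi i}\oint_{\Gamma_d}z^k\bigl(c_0(z;\mu_T)-s_{\mfd}(z)\bigr)\sd z
=\sqrt M\int x^k\bigl(\mu_T(\sd x)-\varrho_{\mfd}(\sd x)\bigr)
=\frac{1}{\sqrt\beta}\,\sqrt{\beta M}\,\vec f_1^*\bigl(T^k-\mathbb T_{\mfd}^k\bigr)\vec f_1 .
\]

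To conclude, fix a finite set $S$ of indices. Proposition~\ref{prop:GaussianPolyFCLT} gives $\bigl(\sqrt{\beta M}\,\vec f_1^*(T^k-\mathbb T_{\mfd}^k)\vec f_1\bigr)_{k\in S}\Wkto[N] (G_k)_{k\in S}$, a centered Gaussian vector; hence the contour-integral vector restricted to $S$, being $\beta^{-1/2}$ times the former on $E_M$, converges in distribution to the centered Gaussian vector $(\beta^{-1/2}G_k)_{k\in S}$, and since $\Prob(E_M)\to1$ the convergence holds for the contour-integral vector itself (if $U_M=V_M$ on $E_M$, $\Prob(E_M)\to1$ and $V_M\Wkto V$, then $U_M\Wkto V$). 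The deterministic factor $\beta^{-1/2}$ merely relates the two normalizations and is absorbed into the (centered Gaussian) limit $\mathcal G$. The one step needing genuine care is the \emph{uniform} choice of $\Gamma_d$ — it must at once properly enclose $\operatorname{supp}\varrho_d$, enclose $\operatorname{supp}\varrho_{\mfd}$ for all large $M$, and enclose $\operatorname{spec}(\mu_T)$ with probability $\to1$ — which is most delicate at $d=1$, where $\operatorname{supp}\varrho_d$ touches the hard edge $0$ and $s_1$ has a singularity there; encircling a full neighborhood of $[0,\gamma_+]$ and invoking $W\ge0$ with the upper bound of Theorem~\ref{t:geb} handles it. The remaining ingredients (Fubini, the residue evaluation behind the reformulation, and the comparison on $E_M$) are routine.
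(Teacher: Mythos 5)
Your proof is correct and matches the paper's (implicit) argument exactly: identify the contour integral with the moment difference via the residue reformulation, restrict to the high-probability event (Theorem~\ref{t:geb}) that $\operatorname{spec}(W)$ lies inside $\Gamma_d$, and invoke Proposition~\ref{prop:GaussianPolyFCLT} together with the standard fact that convergence in distribution is stable under modification on events of vanishing probability. The $\beta^{-1/2}$ normalization mismatch you flag --- $\sqrt{M}$ in the corollary versus $\sqrt{\beta M}$ in the proposition --- is a genuine small inconsistency in the paper's statements rather than in your argument; one should read the corollary's prefactor as $\sqrt{\beta M}$ (or equivalently take the limit to be $\beta^{-1/2}\mathcal G$), as your computation makes explicit.
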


We also need to treat the case of $k = -1$. Suppose $T = H H^T$ where $H$ is real, square, lower-triangular and given by
\begin{align}\label{eq:Hdef}
   H = \begin{bmatrix} \alpha_0 \\
   \beta_0 & \alpha_1 \\
   & \beta_1 & \alpha_2 \\
   && \ddots & \ddots
   \end{bmatrix}
\end{align} 
Then $T_{11} =\alpha_0^2$ and $T_{jj} = \alpha_{j-1}^2 + \beta_{j-2}^2$ for $j > 1$.  Let $\tilde H$ be the matrix formed by removing the first row and column of $H$ and let $\tilde T = \tilde H \tilde H^T$.  Then it follows by Cramer's rule that
\begin{align}\label{eq:Cramer}
    \vec f_1^* T^{-1} \vec f_1 = \frac{ \det(\beta_0^2 \vec f_1 \vec f_1^* +  \tilde T)}{\det T} = \frac{ (\det \tilde T) (1 + \beta_0^2 \vec f_1^* \tilde T^{-1} \vec f_1 )}{\det T} = \frac{1}{\alpha_0^2} ( 1 + \beta_0^2\vec f_1^* \tilde T^{-1} \vec f_1 ).
\end{align}
From this expression, one obtains
\begin{align*}
    \vec f_1^* T^{-1} \vec f_1 = \frac{1}{\alpha_0^2} \left( 1 + \sum_{j=1}^{N-1} \prod_{k=1}^j \left(  \frac{\beta_{k-1}^2}{\alpha_k^2} \right)\right).
\end{align*}
Following \cite[Theorem 3.2.12]{Muirhead1982}:
\begin{proposition}\label{p:Muir}
  Let $\vec y$ be random vector in $\mathbb C^N$ that does not vanish a.s.  Let $X$ be an $N \times M$ matrix with independent $\mathcal N_\beta(0,1)$ entries independent of $\vec y.$  Then
  \begin{align*}
      \frac{\vec y^* \vec y}{\vec y (XX^*)^{-1} \vec y} 
      \lawequals \beta^{-1}\chi_{\beta(M - N + 1)}^2
  \end{align*}
  and therefore
  \begin{align*}
      \vec f_1^* T^{-1} \vec f_1 \lawequals \frac{\beta M}{\chi_{\beta(M - N +1)}^2}.
  \end{align*}
\end{proposition}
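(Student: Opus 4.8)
The plan is to run the classical argument behind \cite[Theorem 3.2.12]{Muirhead1982}: reduce to a coordinate vector by invariance, express the quantity as a Schur complement of $XX^*$, and identify the resulting Gaussian quadratic form.  First I would condition on $\vec y$, which by independence reduces matters to an arbitrary deterministic nonzero vector $\vec y_0$.  The functional $\vec y \mapsto \vec y^*\vec y / (\vec y^*(XX^*)^{-1}\vec y)$ is invariant under scaling of $\vec y$, and it is unchanged if one simultaneously replaces $\vec y$ by $Q\vec y$ and $X$ by $QX$ for any orthogonal ($\beta=1$) or unitary ($\beta=2$) $Q$, because $(QX)(QX)^* = Q(XX^*)Q^*$ and the two factors $Q$ cancel.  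Choosing $Q$ with $Q\vec y_0 = \|\vec y_0\|_2 \vec f_1$ and using the orthogonal/unitary invariance $QX \lawequals X$ of a Gaussian matrix, I obtain
\begin{align*}
  \frac{\vec y_0^*\vec y_0}{\vec y_0^*(XX^*)^{-1}\vec y_0} \lawequals \frac{1}{\vec f_1^*(XX^*)^{-1}\vec f_1} = \frac{1}{[(XX^*)^{-1}]_{11}},
\end{align*}
which is almost surely well defined since $N \le M$ makes $XX^*$ a.s.\ invertible.

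Next I would write the first row of $X$ as $\vec v^*$ with $\vec v \in \mathbb C^M$ and collect the remaining rows into $X_2 \in \mathbb C^{(N-1)\times M}$, so that
\begin{align*}
  XX^* = \begin{bmatrix} \vec v^*\vec v & \vec v^* X_2^* \\ X_2 \vec v & X_2 X_2^* \end{bmatrix}.
\end{align*}
Since $N - 1 < M$, the Gaussian block $X_2$ has a.s.\ full row rank $N-1$, so $X_2 X_2^*$ is a.s.\ invertible, and the block-inverse (Schur complement) identity gives
\begin{align*}
  \frac{1}{[(XX^*)^{-1}]_{11}} = \vec v^*\vec v - \vec v^* X_2^* (X_2 X_2^*)^{-1} X_2 \vec v = \vec v^*(I - P_2)\vec v,
\end{align*}
where $P_2 = X_2^*(X_2 X_2^*)^{-1}X_2$ is the orthogonal projection onto the row space of $X_2$.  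Hence $\Pi := I - P_2$ is the orthogonal projection onto its orthogonal complement, an almost surely $(M-N+1)$-dimensional subspace, and $\Pi$ is a function of $X_2$ alone, hence independent of $\vec v$.

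Finally, conditionally on $X_2$ the vector $\vec v$ still has iid $\mathcal N_\beta(0,1)$ coordinates while $\Pi$ is a fixed orthogonal projection of rank $M - N + 1$.  Diagonalizing $\Pi = V \operatorname{diag}(I_{M-N+1}, 0) V^*$ with $V$ orthogonal/unitary and using the invariance $V^*\vec v \lawequals \vec v$, the quadratic form $\vec v^*\Pi\vec v$ collapses to $\sum_{j=1}^{M-N+1} |Z_j|^2$ with $Z_j$ iid $\mathcal N_\beta(0,1)$, i.e.\ to $\beta^{-1}\chi_{\beta(M-N+1)}^2$; since this law is free of $X_2$, it is the unconditional law as well, proving the first assertion.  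The second assertion is the special case $\vec y = \vec f_1$: since $T = T(W,\vec f_1)$ carries the same spectral data as the pair $(W,\vec f_1)$, one has $\vec f_1^* T^{-1}\vec f_1 = \int \lambda^{-1}\mu_T(\sd\lambda) = \vec f_1^* W^{-1}\vec f_1$, and writing $W = \tfrac1M XX^*$ with $X \lawequals \mathcal G_\beta(N,M)$ gives $\vec f_1^* T^{-1}\vec f_1 = M\,[(XX^*)^{-1}]_{11} \lawequals \beta M / \chi_{\beta(M-N+1)}^2$ by the first part.

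I do not expect a genuine obstacle, since the statement is classical; the points needing care are that the invariance reduction must be done conditionally on $\vec y$ and uses both the scale invariance in $\vec y$ and the orthogonal/unitary invariance of $X$, and that the identification of the quadratic-form law hinges on $I - P_2$ being \emph{independent} of $\vec v$ with the correct almost-sure rank $M - N + 1$, which itself rests on the almost-sure full rank of the Gaussian submatrix $X_2$.
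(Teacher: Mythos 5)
Your proof is correct, and it fills in a complete derivation of what the paper only gestures at with the phrase ``using the QR factorization of $X$.''  That hint points to the Bartlett-style route: write $XX^* = LL^*$ with $L$ lower triangular, use independence of the Cholesky entries with $|L_{NN}|^2 \lawequals \beta^{-1}\chi_{\beta(M-N+1)}^2$ together with $1/[(XX^*)^{-1}]_{NN} = |L_{NN}|^2$, and then appeal to orthogonal/unitary (e.g.\ permutation) invariance to pass from the $(N,N)$ to the $(1,1)$ entry.  You instead work directly with the $(1,1)$ entry via the Schur complement, expressing $1/[(XX^*)^{-1}]_{11}$ as $\vec v^*(I - P_2)\vec v$, a quadratic form of the independent Gaussian first row against the rank-$(M-N+1)$ projection determined by the remaining rows.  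The two are really the same calculation in different clothing: the Schur complement identity you use is exactly what underlies the distributional claim about the trailing Cholesky diagonal entry, so nothing genuinely new is needed, but your packaging avoids the permutation-invariance detour and makes the independence structure ($\Pi$ measurable with respect to $X_2$, hence independent of $\vec v$) transparent.  You are also right to flag the two places that need care: the invariance reduction must be done conditionally on $\vec y$ and uses both scaling and the left unitary invariance of $X$; and the conditional law hinges on the almost-sure rank $M - N + 1$ of $I - P_2$, which comes from almost-sure full row rank of the Gaussian block $X_2$.  Your treatment of the second assertion — identifying $\vec f_1^* T^{-1}\vec f_1 = \int \lambda^{-1}\mu_T(\sd\lambda) = \vec b^* W^{-1}\vec b$ via the spectral measure equivalence and then applying the first part with $\vec b = \vec f_1$ and $W = \tfrac1M XX^*$ — is exactly the paper's own step.
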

\begin{proof}
The first claim can be established using the QR factorization of $X$. 
The second claim for $\vec f_1^* T^{-1} \vec f_1$ follows from the first once we realize $T = T(M^{-1}X X^*, \vec b),$ then $\vec f_1^* T^{-1} \vec f_1 = M\vec b^* (XX^*)^{-1} \vec b$. 
 \end{proof}
 As we can also apply the same proposition to an $(N-1)\times (M-1)$ matrix of normals, and conclude
\begin{align*}
    \vec f_1^* \tilde T^{-1} \vec f_1 \lawequals \frac{\beta M}{\chi_{\beta(M - N +1)}^2}.
\end{align*}
Using \eqref{eq:Cramer} this provides a remarkable identity in law involving chi-square distributions:
\begin{proposition}
  For any integers $\ell \geq 0$ and $M \geq N \geq 1$
  \begin{align*}
      \frac{\beta M}{\chi^2_{\beta(M - N +1)}} \lawequals \frac{1}{\chi_{\beta(M - \ell)}^2} \left(1 + \chi_{\beta(N - \ell-1)}^2 \frac{\beta M}{\chi_{\beta(M-N+1)}^2}  \right)
  \end{align*}
  where the chi-squared variables on the right-hand side are mutually independent.
\end{proposition}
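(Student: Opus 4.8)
The plan is to recognize the asserted identity as the level-$\ell$ step of the continued-fraction expansion of $\vec f_1^* T^{-1}\vec f_1$ displayed just above \eqref{eq:Cramer}, coupled with the fact -- already visible in the remark following Proposition~\ref{p:Muir} -- that the quantity obtained by repeatedly deleting the leading row and column of the Cholesky factor is distributionally \emph{fixed}. By Theorem~\ref{t:lanczos} we may work throughout with $W\lawequals\mathcal W_\beta(N,M)$ and $\vec b=\vec f_1$, in which case \eqref{eq:dist_H} says that $T=T(W,\vec b)$ has Cholesky factor $H=\varphi(T)$ with mutually independent entries $\alpha_j\lawequals\chi_{\beta(M-j)}/\sqrt{\beta M}$ on the diagonal and $\beta_j\lawequals\chi_{\beta(N-j-1)}/\sqrt{\beta M}$ on the subdiagonal.

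First I would peel off the top $\ell$ rows and columns. Writing $H^{(\ell)}$ for the lower-triangular bidiagonal matrix obtained from $H$ by deleting rows and columns $1,\dots,\ell$ and $T^{(\ell)}=H^{(\ell)}(H^{(\ell)})^T$ (so $T^{(0)}=T$, $T^{(1)}=\tilde T$), the matrix-determinant-lemma computation behind \eqref{eq:Cramer}, applied to the leading $1\times 1$ block of $T^{(\ell)}$, gives
\[
 \vec f_1^*(T^{(\ell)})^{-1}\vec f_1=\frac{1}{\alpha_\ell^2}\Bigl(1+\beta_\ell^2\,\vec f_1^*(T^{(\ell+1)})^{-1}\vec f_1\Bigr),
\]
where $\alpha_\ell,\beta_\ell$ are independent of $T^{(\ell+1)}$, the latter depending only on $\alpha_{\ell+1},\beta_{\ell+1},\dots$.

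Second I would identify the two evaluations appearing here. By the chi-degree pattern in \eqref{eq:dist_H}, $H^{(\ell)}$ equals $\sqrt{(M-\ell)/M}$ times the Cholesky factor coming from the Golub--Kahan reduction of an $(N-\ell)\times(M-\ell)$ Gaussian matrix; hence $T^{(\ell)}\lawequals\tfrac{M-\ell}{M}\,T\bigl((M-\ell)^{-1}X'X'^*,\vec b'\bigr)$ with $X'\lawequals\mathcal G_\beta(N-\ell,M-\ell)$, and Proposition~\ref{p:Muir} applied with the shifted parameters $N-\ell\le M-\ell$ yields
\[
 \vec f_1^*(T^{(\ell)})^{-1}\vec f_1\lawequals\frac{M}{M-\ell}\cdot\frac{\beta(M-\ell)}{\chi_{\beta(M-N+1)}^2}=\frac{\beta M}{\chi_{\beta(M-N+1)}^2},
\]
and the same holds for $\vec f_1^*(T^{(\ell+1)})^{-1}\vec f_1$ -- the distribution does not depend on $\ell$, for $0\le\ell\le N-1$. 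Substituting these two evaluations into the displayed step, using $\alpha_\ell^2\lawequals\chi_{\beta(M-\ell)}^2/(\beta M)$ and $\beta_\ell^2\lawequals\chi_{\beta(N-\ell-1)}^2/(\beta M)$ with $\chi_{\beta(M-\ell)}^2$, $\chi_{\beta(N-\ell-1)}^2$ and the $\chi_{\beta(M-N+1)}^2$ coming from $\vec f_1^*(T^{(\ell+1)})^{-1}\vec f_1$ mutually independent, and collecting the factors of $\beta M$, produces the stated identity in law.

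The step I expect to be the main obstacle is the distributional identification in the third paragraph: one must verify carefully that excising the first $\ell$ rows and columns of the Golub--Kahan bidiagonal \eqref{eq:dist_H} reproduces -- up to the explicit deterministic scalar $\sqrt{(M-\ell)/M}$ -- the Golub--Kahan bidiagonal of the $(N-\ell)\times(M-\ell)$ ensemble (transparent from the degrees $\beta(M-j),\beta(N-j-1)$, but it should be stated), and that Proposition~\ref{p:Muir} applies verbatim at the parameters $(N-\ell,M-\ell)$, including the degenerate case $\ell=N-1$ in which $T^{(\ell)}$ is $1\times 1$ and both sides collapse to $\beta M/\chi_{\beta(M-N+1)}^2$. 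Everything else is routine manipulation of independent chi-square variables.
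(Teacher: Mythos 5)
Your argument reproduces the paper's intended proof: apply the Cramer's-rule step \eqref{eq:Cramer} at the $\ell$-th peeled level, use the chi-degree pattern in \eqref{eq:dist_H} together with Proposition~\ref{p:Muir} to see that $\vec f_1^*(T^{(\ell)})^{-1}\vec f_1$ has the $\ell$-independent law $\beta M/\chi^2_{\beta(M-N+1)}$, and substitute. The rescaling identification $T^{(\ell)}\lawequals\tfrac{M-\ell}{M}\,T'$ with $T'$ the Jacobi matrix of the $(N-\ell)\times(M-\ell)$ Wishart, which you flag as the main obstacle, is correct, and the two factors of $M/(M-\ell)$ do cancel exactly as you say.

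One caution on the final step, which you leave implicit. Completing the substitution with $\alpha_\ell^2\lawequals\chi^2_{\beta(M-\ell)}/(\beta M)$ and $\beta_\ell^2\lawequals\chi^2_{\beta(N-\ell-1)}/(\beta M)$ yields
\[
\frac{\beta M}{\chi^2_{\beta(M-N+1)}}\ \lawequals\ \frac{\beta M}{\chi^2_{\beta(M-\ell)}}\left(1+\frac{\chi^2_{\beta(N-\ell-1)}}{\chi^2_{\beta(M-N+1)}}\right),
\]
which is not literally the displayed identity in the proposition: there the factor $\beta M$ sits inside the parentheses rather than outside, and the two expressions are genuinely different (try $N=1$, $M=2$, $\ell=0$, $\beta=1$: the left side is $2/\chi^2_2$, the printed right side is $1/\chi^2_2$). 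Your derivation gives the correct form; the printed display appears to have the $\beta M$ misplaced when the scaled recurrence coefficients were replaced by their chi laws. So do not simply assert that ``collecting the factors of $\beta M$ produces the stated identity'' --- carry the algebra through and note the discrepancy.
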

But more importantly, iterating \eqref{eq:Cramer} $\ell$ times and applying and using Proposition \ref{p:Muir} to describe the remainder, we have:
\begin{proposition}\label{p:finite}
    Suppose $H$ is distributed as in \eqref{eq:dist_H}.  Then for $0 < \ell < N$
    \begin{align*}
         \vec f_1^* T^{-1} \vec f_1 = \frac{1}{\alpha_0^2} \left( 1 +  \sum_{j=1}^{\ell} \prod_{k = 1}^j \frac{\beta_{k-1}^2}{\alpha_{k}^2} + \left(\beta_\ell^2  \frac{\beta M}{\chi_{\beta(M -N +1)}^2} \right)\prod_{k = 1}^{\ell} \frac{\beta_{k-1}^2}{\alpha_{k}^2}\right)
    \end{align*}
    where $\chi_{\beta(M -N +1)}$ depends only on $H_{\ell+1:N,\ell+1:N}$.
\end{proposition}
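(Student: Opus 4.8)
The plan is to exploit the Cramer-type identity \eqref{eq:Cramer} recursively down the diagonal. Write $H^{(m)} := H_{m+1:N,\,m+1:N}$ for the lower-right $(N-m)\times(N-m)$ corner of $H$ and set $T^{(m)} := H^{(m)}(H^{(m)})^T$, so that $T^{(0)} = T$ and each $H^{(m)}$ is again lower-bidiagonal with diagonal entries $\alpha_m,\alpha_{m+1},\dots$ and subdiagonal entries $\beta_m,\beta_{m+1},\dots$. Thus \eqref{eq:Cramer} applies verbatim to each $T^{(m)}$ and yields the one-step recursion
\begin{equation*}
\vec f_1^*(T^{(m)})^{-1}\vec f_1 = \frac{1}{\alpha_m^2}\Bigl(1 + \beta_m^2\,\vec f_1^*(T^{(m+1)})^{-1}\vec f_1\Bigr), \qquad 0 \le m < N .
\end{equation*}
The only thing to check here is that each $T^{(m)}$ is invertible, which holds since $H$ has a.s.\ strictly positive diagonal, so every $H^{(m)}$ is nonsingular.

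First I would unroll this recursion from $m=0$ to $m=\ell$. An immediate induction on $\ell$ (equivalently: split the fully-unrolled expression $\vec f_1^* T^{-1}\vec f_1 = \alpha_0^{-2}\bigl(1 + \sum_{j=1}^{N-1}\prod_{k=1}^{j}\beta_{k-1}^2/\alpha_k^2\bigr)$ recorded above at $j=\ell$ and recognize the tail as $\bigl(\prod_{k=1}^{\ell}\beta_{k-1}^2/\alpha_k^2\bigr)$ times the analogous explicit formula for the corner block $T^{(\ell+1)}$) gives
\begin{equation*}
\vec f_1^* T^{-1}\vec f_1 = \frac{1}{\alpha_0^2}\Biggl(1 + \sum_{j=1}^{\ell}\prod_{k=1}^{j}\frac{\beta_{k-1}^2}{\alpha_k^2} + \beta_\ell^2\prod_{k=1}^{\ell}\frac{\beta_{k-1}^2}{\alpha_k^2}\cdot u_{\ell+1}\Biggr),\qquad u_{\ell+1} := \vec f_1^*(T^{(\ell+1)})^{-1}\vec f_1 .
\end{equation*}
This step is purely algebraic. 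Note in passing that $u_{\ell+1}$ is a deterministic function of the entries of $H^{(\ell+1)} = H_{\ell+2:N,\ell+2:N}$, hence a fortiori of $H_{\ell+1:N,\ell+1:N}$, which is the measurability assertion in the statement.

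It then remains to identify the law of the remainder $u_{\ell+1}$. By the distributional description \eqref{eq:dist_H}, the block $H_{\ell+2:N,\ell+2:N}$ is, up to the single ambient scaling factor $\sqrt{(M-\ell-1)/M}$, exactly the Golub--Kahan bidiagonalization \eqref{eq:GK} of an $(N-\ell-1)\times(M-\ell-1)$ Gaussian matrix; consequently $T^{(\ell+1)}$ has the law of $\tfrac{M-\ell-1}{M}\,T\bigl(\mathcal W_\beta(N-\ell-1,M-\ell-1),\vec f_1\bigr)$. Applying Proposition~\ref{p:Muir} to that smaller matrix — precisely as in the remark following it, which performs a single such contraction — and unwinding the scalar factor yields $u_{\ell+1}\lawequals \tfrac{\beta M}{\chi^2_{\beta((M-\ell-1)-(N-\ell-1)+1)}} = \tfrac{\beta M}{\chi^2_{\beta(M-N+1)}}$. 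Setting $\chi_{\beta(M-N+1)} := (\beta M/u_{\ell+1})^{1/2}$, a random variable measurable with respect to $H_{\ell+1:N,\ell+1:N}$ and having the named distribution, produces the displayed identity.

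The one point requiring genuine care — and which I expect to be the main (mild) obstacle — is this last step: the contraction simultaneously lowers both the ``$N$'' and the ``$M$'' parameters by $\ell+1$, so naive bookkeeping predicts a chi-square with $(M-\ell-1)-(N-\ell-1)+1$ degrees of freedom, and the content is that this collapses to $M-N+1$, exactly as in the one-step remark after Proposition~\ref{p:Muir}, while the ambient normalization $1/\sqrt{\beta M}$ (rather than the sub-block's natural $1/\sqrt{\beta(M-\ell-1)}$) supplies precisely the compensating factor $M/(M-\ell-1)$. Everything else is routine index tracking. (The boundary value $\ell = N-1$ is handled automatically: there $\beta_{N-1}\lawequals\chi_0\equiv 0$, the remainder term vanishes, and the identity reduces to the fully-unrolled formula.)
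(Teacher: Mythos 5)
Your proposal is correct and matches the paper's own (sketched) argument: iterate the Cramer identity \eqref{eq:Cramer} down the diagonal to isolate the corner block, and then identify the law of the remainder $u_{\ell+1} = \vec f_1^*(T^{(\ell+1)})^{-1}\vec f_1$ via Proposition~\ref{p:Muir} applied to the $(N-\ell-1)\times(M-\ell-1)$ sub-problem, observing that the ambient $1/\sqrt{\beta M}$ normalization exactly cancels the $M/(M-\ell-1)$ mismatch to give $\beta M/\chi^2_{\beta(M-N+1)}$. Your remark that $u_{\ell+1}$ is in fact measurable with respect to the even smaller block $H_{\ell+2:N,\ell+2:N}$ is a slight sharpening of the stated measurability claim, but consistent with it; the edge case $\ell = N-1$ is correctly handled by the convention $\beta_{N-1}\lawequals\chi_0\equiv 0$.
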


The following notation is convenient.
\begin{definition}
We write $X_M = c_M + Y_M + o(M^{-1/2})$ if
\begin{align*}
    \sqrt{M}(X_M - c_M), \quad\text{and}\quad \sqrt{M}(Y_M),
\end{align*}
converge, in distribution, to the same distribution as $M \to \infty$.  
\end{definition}

Let $\ell$ be fixed. We use the approximation in distribution \eqref{eq:TriCLT}, $\alpha_j = 1 + Z_{2j+1} /\sqrt{2 \beta M} + o(M^{-1/2})$, $\beta_j = \sqrt \mfd +  Z_{2j + 2}/\sqrt{2 \beta M}+ o(M^{-1/2})$ to find
\begin{align}\label{eq:ajs}
    \alpha_j^2 &= 1 + \frac{\sqrt{2}}{\sqrt{\beta M}}Z_{2j + 1} + o(M^{-1/2}),\\\label{eq:bjs}
    \beta_j^2 &= \mfd + \frac{\sqrt{2\mfd}}{\sqrt{\beta M}}Z_{2j + 2} + o(M^{-1/2}),\\
    \frac{\beta M}{\chi_{\beta(M -N +1)}^2} & = \frac{1}{1-\mfd} \left( 1 + \frac{\sqrt{2}}{\sqrt{1-\mfd} \sqrt{\beta M}}  Z_0\right) + o(M^{-1/2}), \quad Z_0 \lawequals \mathcal N_1 (0,1),\notag
\end{align}
and compute as $N \to \infty$
 \begin{align*}
     1 +& \sum_{j=1}^{\ell} \prod_{k=1}^j \left(  \frac{\beta_{k-1}^2}{\alpha_k^2} \right)   =   1  + \sum_{j=1}^{\ell} \prod_{k =1}^{j}  \mfd\left[1  +  \frac{\sqrt{2}}{\sqrt{\beta M}} \left(  Z_{2k}/\sqrt{d}  - Z_{2k+1} \right) \right]\\
     & = \frac{1 - \mfd^{\ell + 1}}{1-\mfd} +  \frac{\sqrt{2}}{\sqrt{\beta M}} \sum_{k =1}^{\ell}  \mfd^k \frac{1 - \mfd^{\ell-k+1}}{1 -\mfd} \left( Z_{2k}/\sqrt{\mfd}  - Z_{2k+1} \right) + o(M^{-1/2}).
 \end{align*}
Thus
 \begin{align*}
    \sqrt{\beta M} \left( \vec f_1^* T^{-1} \vec f_1  - \frac{1}{1 -\mfd} \right) \Wkto[M] \sqrt{2} \frac{Z_1}{d-1} + \sqrt{2} \sum_{k = 1}^\infty \frac{d^k}{1-d} (Z_{2k}/\sqrt{d} - Z_{2k +1}).
 \end{align*}
We arrive at the following proposition.

\begin{proposition} \label{prop:dist-limit}
  Suppose $H$ is distributed as in \eqref{eq:dist_H} where the entries are labelled according to \eqref{eq:Hdef} and $T = HH^*$. Let $\mathcal Z = [Z_1,Z_2,\ldots]^T$ be a vector of iid standard normal random variables. Then if $N \leq M, \mfd \To[M] d \in (0,1)$
  \begin{align*}
          \sqrt{\beta M} &\left( \vec f_1^* T^{-1} \vec f_1  - \frac{1}{1-\mfd} \right) \Wkto[M] Z_{-1},\\
          Z_{-1} &: = -\sqrt{2} \frac{Z_1}{1-d} + \sqrt{2} \sum_{k = 1}^\infty \frac{d^k}{1-d} (Z_{2k}/\sqrt{d} - Z_{2k +1}).
  \end{align*}
Additionally,
  \begin{align*}
      \sqrt{\beta M} \left( \begin{bmatrix} \vec f_1^* T^{-1} \vec f_1 \\ \alpha_0 \\ \beta_0 \\ \alpha_1 \\ \beta_1 \\\vdots \end{bmatrix} - \begin{bmatrix} \frac{1}{1-\mfd} \\ 1 \\ \sqrt{\mfd} \\ 1 \\ \sqrt{\mfd} \\ \vdots   \end{bmatrix} \right) \Wkto[M] \begin{bmatrix} Z_{-1} \\ \mathcal Z/\sqrt{2} \end{bmatrix}
  \end{align*}
  in the sense of convergence of finite-dimensional marginals.
\end{proposition}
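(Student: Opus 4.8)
\emph{Proof strategy.} The two displays record the joint limit of $\vec f_1^* T^{-1}\vec f_1$ together with the bidiagonal entries $\alpha_j,\beta_j$. Since $Z_{-1}$ is an explicit, $L^2$-convergent linear functional of $\mathcal Z=(Z_1,Z_2,\dots)$, the second display is the substantive one and the first is obtained by projecting onto the first coordinate; indeed the first display alone also follows at once from Proposition~\ref{p:Muir}, which gives $\vec f_1^* T^{-1}\vec f_1\lawequals \beta M/\chi_{\beta(M-N+1)}^2$, a scaled inverse-$\chi^2$ whose classical central limit theorem has limiting variance $2/(1-d)^3$, equal to $\Var(Z_{-1})$. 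For the joint statement the plan is a truncation argument: fix a level $\ell$, split $\vec f_1^* T^{-1}\vec f_1=A_\ell^M+R_\ell^M$ using Proposition~\ref{p:finite}, obtain a joint central limit theorem for $A_\ell^M$ together with finitely many $\alpha_j,\beta_j$ from \eqref{eq:TriCLT} and the delta method, control the rescaled remainder uniformly in $M$, and then let $\ell\to\infty$ through the standard approximation lemma for weak convergence.

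\emph{Main term.} Fix $m$ and take $\ell>m$. Proposition~\ref{p:finite} gives
\[
\vec f_1^* T^{-1}\vec f_1=A_\ell^M+R_\ell^M,\qquad A_\ell^M=\frac{1}{\alpha_0^2}\Bigl(1+\sum_{j=1}^{\ell}\prod_{k=1}^{j}\frac{\beta_{k-1}^2}{\alpha_k^2}\Bigr)=:G_\ell(\alpha_0,\dots,\alpha_\ell,\beta_0,\dots,\beta_{\ell-1}),
\]
with $G_\ell$ a fixed rational function that is $C^1$ with non-vanishing denominator near $(1,\dots,1,\sqrt{\mfd},\dots,\sqrt{\mfd})$, where it equals $\tfrac{1-\mfd^{\ell+1}}{1-\mfd}$; the positivity constraint cutting out that neighbourhood is met with probability tending to $1$ by Theorem~\ref{t:geb}. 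Feeding the joint convergence \eqref{eq:TriCLT} of the first $2\ell+1$ entries of $H$ into the delta method yields, for $\mfd\to d\in(0,1)$,
\[
\sqrt{\beta M}\Bigl(\bigl(A_\ell^M,\alpha_0-1,\beta_0-\sqrt{\mfd},\dots,\beta_m-\sqrt{\mfd}\bigr)-\bigl(\tfrac{1-\mfd^{\ell+1}}{1-\mfd},0,\dots,0\bigr)\Bigr)\Wkto[M]\bigl(R_\ell^\infty,\tfrac{Z_1}{\sqrt2},\dots,\tfrac{Z_{2m+2}}{\sqrt2}\bigr),
\]
where $R_\ell^\infty=-\sqrt2\,Z_1\tfrac{1-d^{\ell+1}}{1-d}+\sqrt2\sum_{k=1}^{\ell}d^{k}\tfrac{1-d^{\ell-k+1}}{1-d}\bigl(Z_{2k}/\sqrt d-Z_{2k+1}\bigr)$ is exactly the linear combination produced by $\nabla G_\ell$, as in the computation displayed just before the proposition (the $\alpha_0^{-2}$ factor supplying the $-\sqrt2 Z_1$ piece). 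The limit is jointly Gaussian because all of its coordinates are linear in the single Gaussian array $\mathbb G$. Letting $\ell\to\infty$, a direct computation shows $R_\ell^\infty\to Z_{-1}$ in $L^2$ (the difference has variance $O(\ell d^{2\ell})$) while the remaining coordinates do not move, so this fixed-$\ell$ limit converges, as $\ell\to\infty$, to $(Z_{-1},Z_1/\sqrt2,\dots,Z_{2m+2}/\sqrt2)$.

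\emph{Remainder and assembly.} The remaining input is $\lim_{\ell\to\infty}\limsup_{M\to\infty}\Pr\bigl(\sqrt{\beta M}\,|R_\ell^M-\tfrac{\mfd^{\ell+1}}{1-\mfd}|>\varepsilon\bigr)=0$ for every $\varepsilon>0$. Granting this, I would apply the standard approximation lemma for weak convergence with $U^M=\sqrt{\beta M}(\vec f_1^* T^{-1}\vec f_1-\tfrac1{1-\mfd},\alpha_0-1,\dots)$, with $V_\ell^M$ the same vector but with $A_\ell^M$ in place of $\vec f_1^* T^{-1}\vec f_1$, with $V_\ell$ the fixed-$\ell$ limit above and $V$ its $\ell\to\infty$ limit: the distance between $U^M$ and $V_\ell^M$ equals exactly $\sqrt{\beta M}\,|R_\ell^M-\tfrac{\mfd^{\ell+1}}{1-\mfd}|$ since all other coordinates coincide, so the hypotheses hold and $U^M\Wkto[M]V$. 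This is the finite-dimensional-marginal statement of Proposition~\ref{prop:dist-limit}, the first display being its first coordinate.

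\emph{Expected main obstacle.} The hard part is the uniform remainder estimate. Because $H$ has the explicit law \eqref{eq:dist_H}, every normalized chi entry has bounded moments of all orders, and by Proposition~\ref{p:finite} the factor $\chi_{\beta(M-N+1)}$ appearing in $R_\ell^M$ depends only on $H_{\ell+1:N,\ell+1:N}$ and has mean of order $M(1-\mfd)$ with bounded relative fluctuations (here $d<1$ is essential). Thus $R_\ell^M$ is a product of $O(\ell)$ factors, each equal to its limiting constant times $1+O_{L^2}(M^{-1/2})$; expanding the product, $\sqrt{\beta M}(R_\ell^M-\tfrac{\mfd^{\ell+1}}{1-\mfd})$ has variance at most $C\ell d^{2\ell}$ uniformly for large $M$, so Chebyshev closes it. (Computing $\Exp R_\ell^M$ can be avoided by reading off the exact mean and variance of $\vec f_1^* T^{-1}\vec f_1$ from Proposition~\ref{p:Muir} and subtracting the contribution of $A_\ell^M$.) Everything else — the delta method for $A_\ell^M$, the $L^2$ tail bound for $R_\ell^\infty$, and the approximation lemma — is routine.
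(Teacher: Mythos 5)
Your proposal takes essentially the paper's own route: decompose $\vec f_1^*T^{-1}\vec f_1$ via Proposition~\ref{p:finite}, expand each bidiagonal entry to first order using \eqref{eq:TriCLT}, and then send the truncation level $\ell\to\infty$. The paper's presentation is informal (it stops at the fixed-$\ell$ first-order expansion and then asserts the limit), so your more careful bookkeeping --- the explicit delta method for $A_\ell^M$, the $L^2$ bound on $R_\ell^\infty - Z_{-1}$ of order $\ell d^{2\ell}$, and the uniform remainder estimate feeding into the approximation lemma --- supplies exactly the rigor the paper leaves implicit. Your observation that the one-dimensional marginal limit also follows directly from Proposition~\ref{p:Muir} (since $\vec f_1^*T^{-1}\vec f_1$ is a scaled inverse-$\chi^2$ with limiting variance $2/(1-d)^3 = \Var(Z_{-1})$) is a nice consistency check that the paper sets up but does not explicitly invoke.

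One small slip to correct in the assembly step: as written, $V_\ell^M$ replaces $\vec f_1^*T^{-1}\vec f_1$ by $A_\ell^M$ but keeps the centering $\tfrac{1}{1-\mfd}$, so its first coordinate $\sqrt{\beta M}(A_\ell^M-\tfrac{1}{1-\mfd})$ diverges for fixed $\ell$, and the difference $U^M-V_\ell^M$ in that coordinate is $\sqrt{\beta M}\,R_\ell^M$, not $\sqrt{\beta M}\,|R_\ell^M-\tfrac{\mfd^{\ell+1}}{1-\mfd}|$. The fix is to center $A_\ell^M$ at $\tfrac{1-\mfd^{\ell+1}}{1-\mfd}$ in the definition of $V_\ell^M$ (i.e., absorb the deterministic part $\tfrac{\mfd^{\ell+1}}{1-\mfd}$ of the remainder into $V_\ell^M$), which is precisely what your main-term display and remainder estimate already assume --- the assembly paragraph just needs to say so. A minor separate point: the non-vanishing of the denominator of $G_\ell$ near $(1,\dots,\sqrt{\mfd},\dots)$ comes from concentration of the $\chi$-distributed entries $\alpha_j$, not from Theorem~\ref{t:geb}, which controls eigenvalues rather than bidiagonal entries; the conclusion is unchanged.
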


\subsection{Universality for the moment fluctuations of the spectral measure}

We now generalize Corollary \ref{cor:GaussianPolyFCLT} to general distributions. 
Let $R(z) = R(z;X) = (XX^* - z\Id)^{-1}$ denote the resolvent of $XX^*$ and define  $ G(z) = G(z;X) = \begin{bmatrix} -I & X^* \\ X & -zI \end{bmatrix}^{-1}$.  The following is a direct consequence of \cite[Theorems 3.6 and 3.7]{Knowles2017}.

\begin{proposition} \label{prop:tail}
  Suppose $X$ is a sample covariance matrix with $\mfd = N/M \To[M] d \in (0,\infty)$.  For any $\delta,\epsilon > 0$ and for any $R, D>0$ there is a constant $C$ so that
  for all $M \in \N,$
  \begin{align*}
    \sup_{z \in \Gamma}&
    \sup_{\vec v, \vec w \in \C^{N+M}}
    \Pr\left[ 
      \left| \vec v^* G(z) \vec w
      - \vec v^*\Pi_\mfd(z)  \vec w 
      \right|
      \geq \|\vec v\|\|\vec w\| M^{\epsilon - 1/2}
    \right] \leq C M^{-D}, \\
    \quad \Pi_\mfd(z) &= \begin{bmatrix} - (1 + s_\mfd (z))^{-1} I_M & 0 \\ 0 & s_\mfd(z)I_N \end{bmatrix},
  \end{align*}
  and therefore
  \[
    \sup_{z \in \Gamma}
    \sup_{\vec v, \vec w \in \C^N}
    \Pr\left[ 
      \left| \vec v^* R(z) \vec w
      - s_\mfd(z) \vec v^* \vec w 
      \right|
      \geq \|\vec v\|\|\vec w\| M^{\epsilon - 1/2}
    \right] \leq C M^{-D},
  \]
  where $\Gamma$ is any bounded simple closed curve that does not intersect the support of $\varrho_d$.
\end{proposition}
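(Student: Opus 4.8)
The first display is precisely the anisotropic local law for sample covariance matrices, so the plan is to quote \cite[Theorems 3.6 and 3.7]{Knowles2017} essentially verbatim and then descend from the linearization $G(z)$ to the resolvent $R(z)$ by a Schur-complement identity. Observe first that both suprema sit \emph{outside} the probability, so only the pointwise-in-$(z,\vec v,\vec w)$ form of the local law is needed — no net argument on $\Gamma$ is required, and the constant $C$ is allowed to depend on the (implicit) parameters $R \geq \sup_{z \in \Gamma}|z|$, $\delta \leq \mathrm{dist}(\Gamma, \mathrm{supp}\,\varrho_d)$, $\epsilon$ and $D$. The remaining work is bookkeeping. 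First, the hypotheses of Definition~\ref{def:SCMs} — independent entries with $\Exp X_{ij} = 0$, $\Exp|X_{ij}|^2 = 1/M$, the $\beta$-dictated matching of the real and imaginary second moments, and $\Exp|\sqrt M X_{ij}|^p \leq C_p$ for every $p$ — are exactly the admissibility conditions of \cite{Knowles2017} (the last being the standard ``all moments finite'' requirement). Second, since $\mfd \To[M] d$ and the edges of $\varrho_\mfd$ converge to those of $\varrho_d$, a bounded contour $\Gamma$ at fixed positive distance $\delta$ from $\mathrm{supp}\,\varrho_d$ lies, for all large $M$, in the spectral domain on which the cited theorems give error parameter $O(M^{\epsilon - 1/2})$: the part of $\Gamma$ away from $\R$ is the easy regime and the part near $\R$ but off $\mathrm{supp}\,\varrho_\mfd$ is covered by the off-spectrum statements. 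Third, the deterministic matrix in \cite{Knowles2017} must be identified with $\Pi_\mfd(z)$; this is a computation with the Marchenko--Pastur self-consistent equation, with $s_\mfd(z)$ the Stieltjes transform of $\varrho_\mfd$ (note the subscript $\mfd = N/M$, so no approximation of $\mfd$ by $d$ is needed) and $-(1+s_\mfd(z))^{-1}$ the companion transform governing the $M \times M$ block, which is exactly the block-diagonal $\Pi_\mfd(z)$ displayed.

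For the resolvent bound I would pass to the lower-right block of $G(z)$. Writing the $(M+N)\times(M+N)$ matrix with the $M$-block first, $G(z) = \begin{bmatrix} -I_M & X^* \\ X & -zI_N\end{bmatrix}^{-1}$, its Schur complement in the $(-I_M)$ block shows that the lower-right $N\times N$ block of $G(z)$ is $\big(-zI_N - X(-I_M)^{-1}X^*\big)^{-1} = (XX^* - zI_N)^{-1} = R(z)$, while the lower-right block of $\Pi_\mfd(z)$ is $s_\mfd(z)I_N$. Thus, for $\vec v,\vec w \in \C^N$, embedding them as $\widehat{\vec v} = (0,\vec v)$, $\widehat{\vec w} = (0,\vec w)$ in $\C^{M+N}$ gives $\vec v^* R(z)\vec w = \widehat{\vec v}^* G(z)\widehat{\vec w}$, $s_\mfd(z)\,\vec v^*\vec w = \widehat{\vec v}^*\Pi_\mfd(z)\widehat{\vec w}$, and $\|\widehat{\vec v}\| = \|\vec v\|$, $\|\widehat{\vec w}\| = \|\vec w\|$; restricting the supremum over $\C^{M+N}$ to vectors of this form yields the second display from the first.

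The only step that takes genuine care — as opposed to being genuinely hard — is the third bookkeeping item together with the matching of normalizations: aligning the admissibility and scaling conventions of \cite{Knowles2017} with Definition~\ref{def:SCMs} and verifying that their deterministic equivalent is $\Pi_\mfd(z)$. Everything else reduces to a domain check and a two-line block inversion.
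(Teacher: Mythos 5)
Your proposal is correct and takes essentially the same approach as the paper, which offers no proof beyond noting that the first display is a direct consequence of Theorems~3.6 and~3.7 of Knowles--Yin. The descent from $G(z)$ to $R(z)$ by reading off the lower-right $N\times N$ block via the Schur complement is exactly what the paper encodes in its equation \eqref{eq:schur}, so your elaboration supplies the bookkeeping that the citation leaves implicit.
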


Define the classical eigenvalue locations $\gamma_i = \gamma_i^{(N)}$ by $N \int_{\gamma_i}^\infty \varrho_{\mfd}(\sd x) = i -1/2$ and from \cite[Theorem 3.12]{Knowles2017} we have:
\begin{proposition}[Eigenvalue rigidity]\label{p:rigidity}
  Let $X$ be a sample covariance matrix and denote the eigenvalues of $XX^*$ by $\lambda_1 \geq \lambda_2 \geq \cdots \geq \lambda_N$.  For any $\epsilon > 0$ and for any $D>0$ there is a constant $C$ so that
  \begin{align*}
  \Prob \left(  |\lambda_i  -\gamma_i| > N^{\epsilon}    (\max\{i, N + 1 - i\})^{-1/3} N^{-2/3} ~~\text{for any}~~i\right) \leq C N^{-D}.
  \end{align*}
\end{proposition}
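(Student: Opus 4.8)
The plan is to deduce the estimate directly from \cite[Theorem 3.12]{Knowles2017}, so essentially all the work lies in checking that our hypotheses are a special case of theirs and in reconciling normalizations. First I would record that a sample covariance matrix in the sense of Definition~\ref{def:SCMs} has independent entries with $\Exp X_{ij} = 0$, $\Exp |X_{ij}|^2 = 1/M$, and $\Exp |\sqrt M X_{ij}|^p \le C_p$ for every $p \in \N$, in both the real ($\beta = 1$) and complex ($\beta = 2$) cases; after rescaling $\sqrt M X_{ij}$ to unit-variance entries this is (in fact stronger than) the moment and boundedness input of the anisotropic local law framework of \cite{Knowles2017}, taken with population covariance $T = I$. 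In that isotropic special case the anisotropic law reduces to the Marchenko--Pastur law with Stieltjes transform $s_\mfd$ from \eqref{eq:smp}. I would also note that the regime $\mfd \To[M] d \in (0,1]$ is admissible: for $d < 1$ both spectral edges $\gamma_\pm = (1\pm\sqrt d)^2$ of \eqref{eq:MP} are regular square-root edges, while for $d = 1$ the lower edge degenerates to a hard edge, and both cases are covered by the rigidity estimate of \cite{Knowles2017}.

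Next I would reconcile the indexing. Our classical locations are defined by $N\int_{\gamma_i}^\infty \varrho_\mfd(\sd x) = i - 1/2$, which agrees with the quantile convention of \cite{Knowles2017} up to a bounded shift of the index and the orientation of the labeling; such an $O(1)$ shift moves $\gamma_i$ by at most a constant multiple of $(\max\{i, N+1-i\})^{-1/3} N^{-2/3}$, which can be absorbed into the factor $N^\epsilon$ after shrinking $\epsilon$. With hypotheses and indexing aligned, \cite[Theorem 3.12]{Knowles2017} yields, for each fixed $i$ and any $D' > 0$,
\[
\Prob\left( |\lambda_i - \gamma_i| > N^{\epsilon/2} (\max\{i, N+1-i\})^{-1/3} N^{-2/3} \right) \le C' N^{-D'},
\]
with $C'$ depending only on $\epsilon$, $D'$ and the $C_p$. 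I would then take $D' = D+1$ and apply a union bound over the $N$ indices $i = 1, \ldots, N$, which converts this into the claimed bound holding simultaneously for all $i$; relabeling $\epsilon/2 \to \epsilon$ and enlarging $C$ to cover small $N$ finishes the argument.

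The one genuine point of care — the part I expect to be the main obstacle — is the hard-edge case $d = 1$: there the true fluctuation scale of $\lambda_i$ for $i$ near $N$ is of order $N^{-2}$ rather than $N^{-2/3}$, so one must confirm that the $(\max\{i, N+1-i\})^{-1/3} N^{-2/3}$ bound stated in \cite[Theorem 3.12]{Knowles2017} — far from optimal in that range — is nonetheless valid as an upper bound all the way down to $i = N$, and that the normalization of $\gamma_N$ used here matches theirs near the origin. Everything else is a direct transcription of the cited theorem.
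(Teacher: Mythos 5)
Your argument is correct and matches the paper in substance: the paper proves this proposition by simply citing \cite[Theorem 3.12]{Knowles2017} with no further elaboration, and your additional verifications (matching the moment/normalization hypotheses from Definition~\ref{def:SCMs}, reconciling the half-index convention in the definition of $\gamma_i$) are exactly the bookkeeping a careful reader would want. One small remark: the union bound over $i$ is likely superfluous, since \cite[Theorem 3.12]{Knowles2017} is already stated in the stochastic-domination form, which controls all indices simultaneously rather than one $i$ at a time. The hard-edge caveat you flag for $d=1$ is the one genuine point the paper passes over in silence; you are right that $(\max\{i, N+1-i\})^{-1/3} N^{-2/3}$ is only a (non-optimal) upper bound on the fluctuation scale for $i$ near $N$, and confirming that the cited theorem's edge-regularity hypotheses admit $\mfd \to 1$ is the one substantive thing that a reader must check before accepting the citation wholesale.
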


\begin{definition}\label{d:testfun}
Let $\Phi: \mathbb C^n \to \mathbb R$ be bounded.  Suppose, in addition, that for any multi-index $\alpha = (\alpha_1,\ldots,\alpha_n)$, $1 \leq |\alpha| \leq 5$ and for any $\epsilon' > 0$ sufficiently small, we have
  \begin{align*}
      \max \{|\partial^\alpha \Phi(x_1,\ldots,x_n)| : \max_j |x_j| &\leq M^{\epsilon'} \} \leq M^{C_0 \epsilon'},
  \end{align*}
  for $C_0 > 0$.  Then $\Phi$ is called an admissible test function.
\end{definition}

\begin{theorem}[Comparison]\label{t:GC}
   Let $W = XX^*$ and $\tilde W = YY^*$ be two sample covariance matrices such that
  \begin{align*}
    \Exp (\Re X_{ij})^\ell (\Im X_{ij} )^p = \Exp (\Re Y_{ij})^\ell (\Im Y_{ij} )^p, \quad \ell + p \leq 4, \quad 1 \leq i \leq N, \quad 1 \leq j \leq M.
\end{align*}
  For each $j$, let $\Gamma_j = \partial \Omega_j$, $\Omega_j = \overline \Omega_j$ be a simple smooth positively-oriented curve that is uniformly bounded away from support of the Marchenko--Pastur law $\varrho_d$. Suppose that $f_1,f_2,\ldots,f_n$ is a finite collection of functions that are analytic in a neighborhood of $\Omega$.  Then for any admissible test function $\Phi: \mathbb C^n \to \mathbb R$ we have for $T=T(W,\vec b), \tilde T =T(\tilde W,\vec b),$
  \begin{align*}
      \left|\Exp \Phi\left(\frac{\sqrt{M}}{2 \pi i}\oint_{\Gamma_1} f_1(z) (c_0(z; \mu_T) - s_\mfd(z))\sd z,\ldots, \frac{\sqrt{M}}{2 \pi i}\oint_{\Gamma_n} f_n(z) (c_0(z; \mu_T) - s_\mfd(z)) \sd z \right) \right.\\
      -  \left.\Exp\Phi\left(\frac{\sqrt{M}}{2 \pi i}\oint_{\Gamma_1} f_1(z) (c_0(z; \mu_{\tilde T}) - s_\mfd(z)) \sd z,\ldots, \frac{\sqrt{M}}{2 \pi i}\oint_{\Gamma_n} f_n(z) (c_0(z; \mu_{\tilde T}) - s_\mfd(z)) \sd z \right)\right|\\
      \leq C M^{-\sigma}
  \end{align*}
  for some $C,\sigma > 0$.  Here $C$ will depend on $n$, the constants $C_p$ in Definition~\ref{def:SCMs}, $\Phi$, $\Gamma_1,\ldots,\Gamma_n$ and $f_1,\ldots,f_n$ and $\sigma$ will depend on the constant $C_0$ in Definition~\ref{d:testfun}.
\end{theorem}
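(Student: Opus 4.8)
The plan is to run a Lindeberg-type entrywise replacement (Green's‑function comparison) argument, with the anisotropic local law of Proposition~\ref{prop:tail} and the eigenvalue rigidity of Proposition~\ref{p:rigidity} as the analytic inputs. First I would reduce everything to resolvent quadratic forms: for a unit vector $\vec b$ and $W=XX^*$, \eqref{eq:TriMoment} and Proposition~\ref{prop:lanczos_equiv} give $m_k(\mu_T)=\vec b^* W^k\vec b$, so $c_0(z;\mu_T)=\vec b^*(W-z)^{-1}\vec b=\vec b^* R(z;X)\vec b$ for $z$ off the spectrum of $W$, and by Proposition~\ref{p:rigidity} each $\Gamma_j$ avoids the spectrum off an event of probability $\leq CM^{-D}$. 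Setting $\Xi_\ell(X)=\frac{\sqrt M}{2\pi i}\oint_{\Gamma_\ell} f_\ell(z)\,(\vec b^* R(z;X)\vec b-s_\mfd(z))\,\sd z$, Proposition~\ref{prop:tail} gives $\sup_{z\in\Gamma_\ell}|\vec b^* R(z;X)\vec b-s_\mfd(z)|\prec M^{-1/2}$, hence $\Xi_\ell(X)\prec 1$; in particular $(\Xi_1,\dots,\Xi_n)$ lies in $\{\,|x_j|\leq M^{\epsilon'}\,\}$ off an event of probability $\leq CM^{-D}$, on which $\Phi$ is bounded and contributes negligibly. The same holds for $\tilde W=YY^*$, so it suffices to compare $\Exp\,\Phi(\Xi_1(X),\dots,\Xi_n(X))$ with the analogous expression for $Y$.

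Next I would enumerate the $NM$ matrix entries and interpolate $X=X^{(0)},X^{(1)},\dots,X^{(NM)}=Y$, consecutive matrices differing in a single entry, say position $(i,j)$; each $X^{(\gamma)}$ still satisfies Definition~\ref{def:SCMs}, so Steps above apply to it, and it suffices to bound each single-swap difference by $M^{-2-\sigma'}$. For a fixed swap, let $Q$ be the common matrix with that entry zeroed, $W_0=QQ^*$, $R_0(z)=(W_0-z)^{-1}$. Restoring a scalar $\zeta$ in position $(i,j)$ perturbs $W_0$ by $\Delta_\zeta$ of rank $\leq 2$ with $\|\Delta_\zeta\|_{\mathrm{op}}\prec M^{-1/2}$ (from $\|Q\|_{\mathrm{op}}=O(1)$ and $|\zeta|\prec M^{-1/2}$, the latter from the uniform moment bound in Definition~\ref{def:SCMs}), and $\|R_0(z)\|_{\mathrm{op}}=O(1)$ on each $\Gamma_\ell$ off a rare event. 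Expanding $R(z;X_\zeta)$ as a Neumann series in $\Delta_\zeta$, keeping terms through $\Delta_\zeta^4$ and then separating $\Delta_\zeta$ into its $\zeta$–monomials, writes $\vec b^* R(z;X_\zeta)\vec b-s_\mfd(z)$ as a polynomial in $(\zeta,\bar\zeta)$ of degree $\leq 4$ — with coefficients that are $\Gamma_\ell$-integrals of products of quadratic forms $\vec v^* R_0(z)\vec w$, $\vec v,\vec w\in\{\vec b,\vec f_i,Q\vec f_j\}$, depending only on $Q$ — plus an error $\prec M^{-5/2}$. Multiplying by $\sqrt M$ and Taylor-expanding $\Phi$ to fourth order about $(\Xi_1(Q),\dots,\Xi_n(Q))$ reduces the swap difference to $\zeta$-monomials: for fixed $Q$, the monomials of total degree $\leq 4$ have $\zeta$-expectation depending only on the generalized moments $\Exp(\Re X_{ij})^\ell(\Im X_{ij})^p$, $\ell+p\leq 4$, which coincide for $X_{ij}$ and $Y_{ij}$, and the coefficients are common to both branches, so these cancel and only a fifth-order remainder survives.

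The main obstacle is precisely bounding this fifth-order remainder by $M^{-2-\sigma'}$ per swap, so that summing over $NM=O(M^2)$ swaps gives $O(M^{-\sigma})$. The crude estimate $\Exp|\zeta|^{5}\prec M^{-5/2}$ times $\sup|\partial^5|$ does not suffice, since the $\sqrt M$ prefactor lets $\partial^5(\Phi\circ\vec\Xi)$ be as large as $M^{1+o(1)}$ (e.g.\ via a Faà-di-Bruno term with two second-order factors, which are only $\prec\sqrt M$). The extra power has to come from the \emph{isotropic} form of Proposition~\ref{prop:tail} together with the classical large-deviation bounds for linear/quadratic forms in the independent column $Q\vec f_j$, which after a Sherman--Morrison step is independent of $R_0$: these force the ``off-diagonal'' quadratic forms $(Q\vec f_j)^* R_0\vec f_i$, $(Q\vec f_j)^* R_0\vec b$, $\vec f_i^* R_0\vec b$ ($\vec b$ generic) appearing in the expansion to be $\prec M^{-1/2}$, and a count of how many such factors each fifth-or-higher-degree monomial must contain, combined with $\Exp|\zeta|^{p}\prec M^{-p/2}$, produces the needed smallness. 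Because these gains degrade when $\vec b$ is localized near some $\vec f_i$, one should split the entries by the size of $|b_i|$ — using $\sum_i|b_i|^2=1$ to bound the number of ``aligned'' entries by $O(M^{1+2\delta})$ — and use a weaker per-swap bound there and a stronger one elsewhere, finally choosing $\delta$ and the admissibility exponent $C_0\epsilon'$ of Definition~\ref{d:testfun} small enough that both families sum to $O(M^{-\sigma})$. The remaining ingredients — rigidity for $Q$ and for each $X^{(\gamma)}$ to keep the contours off the spectrum at every step, and the elementary bookkeeping of the Neumann expansion — are routine; the delicate part is this power-counting in the fifth-order remainder.
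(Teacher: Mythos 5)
Your plan shares the Lindeberg-swap backbone with the paper, but the two proofs take genuinely different routes around the two hard spots, and you should be aware of the distinction.

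\emph{Where the paper goes differently.} The paper does not Taylor-expand the composite $\Phi\circ\vec\Xi$ at all. It first replaces each contour integral by a trapezoidal-rule sum with $m=M^{1/20}$ nodes (paying an Euler--Maclaurin error that is negligible on the rigidity event), thereby expressing the observable as an admissible test function $\Psi$ of $m$ \emph{scalar} quadratic forms $\hat{\vec b}^*S(z_j)\hat{\vec b}$. It then invokes Proposition~\ref{p:replace}, whose dependence on the number of arguments $n$ is only $n^5$; since $m^5=M^{1/4}\ll M^{1/2}$, this closes without ever touching derivatives of the composite map. This discretization sidesteps exactly the Fa\`a--di--Bruno blow-up you are worried about: all derivatives of $\Psi$ are bounded (uniformly in $m$) because the quadrature weights satisfy $\sum_j|w_j|\le C$, so $\Psi$ remains admissible with the same $C_0$. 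Secondly, the paper never works with the $N\times N$ resolvent $R(z)$ directly. It works with the $(M+N)\times(M+N)$ linearization $G(z)$, lifts $\vec b$ to $\hat{\vec b}=\begin{bmatrix}0\\\vec b\end{bmatrix}$, and exploits the block-diagonal structure of $\Pi_\mfd$ to get the identity \eqref{eq:vanish}: $\hat{\vec u}^*\Pi V\Pi\hat{\vec v}=0$. This kills the would-be leading term in the resolvent expansion for every odd order, yielding Lemma~\ref{l:resolve}'s decomposition $\hat{\vec u}^*S(z;X)\hat{\vec v}=\hat{\vec u}^*S(z;Q)\hat{\vec v}+\sum_{k\le 3}M^{-k/2}J_k+M^{-5/2}J_4$ with uniformly controlled coefficients $f_{k,\ell}\prec 1$, rather than requiring you to invoke the isotropic law for each ``off-diagonal'' factor individually.

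\emph{What your route buys and what it costs.} Your direct-resolvent expansion in $\Delta_\zeta=\zeta\vec f_i(Q\vec f_j)^*+\bar\zeta(Q\vec f_j)\vec f_i^*+|\zeta|^2\vec f_i\vec f_i^*$ is workable, but the $|\zeta|^2$ piece makes the degree-two coefficients genuinely as large as $\sqrt{M}|\vec b^*R_0\vec f_i|^2\sim\sqrt M\,|b_i|^2$, which is what forces your split on $|b_i|$. That split, combined with $\sum_i|b_i|^2=1$, is a correct way to make the swap sum converge, and it is a true technical point: a crude uniform bound $f\prec 1$ on the coefficients of degree-$\ge5$ $\zeta$-monomials appearing in the lower-order Taylor powers of $\Phi$ is one $\sqrt M$ short after summing over the $NM$ swaps. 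The paper finesses this by discretizing first and by the $\Pi$-orthogonality of the linearized picture, whereas you make it explicit through the refined $|b_i|$-dependent coefficient bound and the alignment split. Neither approach is wrong, but they are not the same proof, and the payoff of the paper's route is that it never has to single out the aligned entries: the discretization reduces the analytic content to Proposition~\ref{p:replace}, and the $\Pi$-structure supplies the orthogonality gain that your argument instead has to extract from the isotropic law applied factor-by-factor.

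\emph{What is still missing from your sketch.} You have identified the right difficulty and the right remedy, but you have not carried out the power counting that you yourself flag as the delicate step: enumerating which degree-$\ge5$ monomials in the Neumann/Taylor expansion can carry a $\sqrt M$-sized coefficient, verifying that every such monomial comes with at least two factors of $|b_i|$ and at least one off-diagonal factor $\prec M^{-1/2}$, and then running the two-regime split against $\sum_i|b_i|^2=1$ together with the $|\zeta|^p\prec M^{-p/2}$ decays, with a final choice of $\delta$ and of the admissibility exponent. Until that bookkeeping is done, this is an outline rather than a proof. You would also need a version of Lemma~\ref{l:resolve} adapted to $R$ instead of $G$, since the isotropic bounds you cite are stated in the linearized framework, and a short argument that the contours $\Gamma_j$ stay bounded away from the spectrum of \emph{every} interpolating $X^{(\gamma)}$ simultaneously (this is where Proposition~\ref{p:rigidity} and the union bound over $\gamma$ come in).
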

\begin{remark}
Note that in Theorem~\ref{t:GC}, if $\mfd$ is bounded uniformly away from one, a contour $\Gamma_j$ could just encircle $z = 0$.  And if  $\mfd  \to d \in (0,1]$ the only non-trivial case is where the contour $\Gamma_j$ encircles the entire support of $\varrho_d$.
\end{remark}

This gives immediate corollaries.
\begin{corollary}\label{cor:UniversalMoments}
Suppose $W$ is a sample covariance matrix satisfying the moment matching  condition (Definition~\ref{def:moment}) with $\mfd = N/M \To[M] d \in (0,\infty)$.  Then for any sequence of unit vectors $\vec b= \vec b_N$ of length $N,$ with $T=T(W,\vec b),$ the vector
  \begin{align*}
  \left( \frac{\sqrt{M}}{2 \pi i}\oint_{\Gamma} z^k (c_0(z; \mu_T) - s_\mfd (z)) \sd z\right)_{k \geq 1}
  \Wkto[N] \mathcal{G},
  \end{align*}
  in the sense of finite-dimensional marginals, where $\mathcal{G}$ is the same process as in Proposition \ref{prop:GaussianPolyFCLT}. 
\end{corollary}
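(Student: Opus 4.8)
The plan is to transfer the Gaussian statement of Corollary~\ref{cor:GaussianPolyFCLT} to a general moment-matched sample covariance matrix $W$ by means of the comparison estimate of Theorem~\ref{t:GC}. Since the convergence asserted is of finite-dimensional marginals, I would fix a finite index set $1 \le k_1 < \cdots < k_n$ and aim to show that the random vector
\begin{align*}
  V_M := \left( \frac{\sqrt{M}}{2\pi \mathrm i}\oint_{\Gamma} z^{k_j}\bigl(c_0(z;\mu_T) - s_\mfd(z)\bigr)\,\sd z \right)_{j=1}^n, \qquad T = T(W,\vec b_N),
\end{align*}
converges in distribution, as $M \to \infty$, to the marginal $(G_{k_1},\dots,G_{k_n})$ of the centered Gaussian vector $\mathcal G$ of Proposition~\ref{prop:GaussianPolyFCLT}. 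Each coordinate of $V_M$ is real (the integrand is a meromorphic function with only real residues), so I may test $V_M$ against real trigonometric functions.

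Next I would introduce a Gaussian companion on the \emph{same} $N, M$: let $\tilde W \lawequals \mathcal W_{\beta}(N,M)$, written as $\tilde W = YY^*$ with $Y$ having iid $\mathcal N_{\beta}(0,1/M)$ entries, and let $\tilde V_M$ be the analogue of $V_M$ with $\mu_T$ replaced by $\mu_{\tilde T}$, $\tilde T = T(\tilde W,\vec b_N)$. Writing $W = XX^*$ as in Definition~\ref{def:SCMs}, the moment matching hypothesis (Definition~\ref{def:moment}) says precisely that $X$ and $Y$ agree in all joint moments of total degree at most four, so Theorem~\ref{t:GC} applies with $f_j(z) = z^{k_j}$, a common contour $\Gamma_j \equiv \Gamma$ enclosing the support of $\varrho_d$, and, for fixed $t = (t_1,\dots,t_n) \in \R^n$, the two test functions $\Phi_1(x) = \cos\langle t, \Re x\rangle$ and $\Phi_2(x) = \sin\langle t, \Re x\rangle$ on $\C^n$. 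It yields
\begin{align*}
  \left| \Exp e^{\mathrm i \langle t, V_M\rangle} - \Exp e^{\mathrm i \langle t, \tilde V_M\rangle} \right| \le C(t)\, M^{-\sigma} \To[M] 0 .
\end{align*}
Since $\tilde W$ is Gaussian, Corollary~\ref{cor:GaussianPolyFCLT} gives $\tilde V_M \Wkto[M] (G_{k_1},\dots,G_{k_n})$, hence $\Exp e^{\mathrm i \langle t, \tilde V_M\rangle} \to \Exp e^{\mathrm i \langle t, (G_{k_1},\dots,G_{k_n})\rangle}$; combining with the previous display and applying L\'evy's continuity theorem completes the marginal. The same reasoning works for every sequence $\vec b_N$, since the constant $C$ in Theorem~\ref{t:GC} does not depend on $\vec b$; and the regime $d \in (1,\infty)$ (where $N > M$) is either handled identically once one records the companion Gaussian statement there, or reduced to $d < 1$ by the spectral duality $XX^* \leftrightarrow X^*X$, which preserves the moments $m_k$ with $k \ge 1$.

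I do not anticipate a genuine obstacle, as essentially all of the analytic content is already packaged in Theorem~\ref{t:GC} and Corollary~\ref{cor:GaussianPolyFCLT}; what remains is the soft step of writing the characteristic function as an expectation of admissible test functions and quoting L\'evy's theorem. The one point that warrants a line of checking is that $\Phi_1, \Phi_2$ are admissible in the sense of Definition~\ref{d:testfun}: both are bounded on $\C^n$, and with $t$ fixed every partial derivative of order $1 \le |\alpha| \le 5$ is bounded in modulus by the constant $\prod_j (1+|t_j|)^5$, which is $\le M^{C_0\epsilon'}$ for any $C_0 > 0$ and all large $M$. Consequently the exponent $\sigma$ produced by Theorem~\ref{t:GC} is strictly positive, which is all that is needed for the displayed bound to vanish.
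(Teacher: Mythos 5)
Your proposal is correct and matches the paper's intent: the paper declares the corollary ``immediate'' from Theorem~\ref{t:GC}, and what you have done is supply the routine Cram\'er--Wold/L\'evy argument that makes this precise --- fix finitely many moment indices, test the vector $V_M$ against $\cos\langle t,\cdot\rangle$ and $\sin\langle t,\cdot\rangle$ (checking these are admissible in the sense of Definition~\ref{d:testfun}, which they trivially are since $t$ is fixed), compare with the Gaussian companion via Theorem~\ref{t:GC}, and then invoke Corollary~\ref{cor:GaussianPolyFCLT} together with L\'evy's continuity theorem. Your side remark on $d\in(1,\infty)$ is well placed: the Gaussian statements Proposition~\ref{prop:GaussianPolyFCLT} and Corollary~\ref{cor:GaussianPolyFCLT} are formulated only for $N\le M$, i.e. $d\le 1$, so the $d>1$ case of the corollary as written genuinely requires either a companion Gaussian CLT for $N>M$ or a duality reduction, and the paper leaves this implicit; this is a feature of the paper's statement rather than a defect of your argument.
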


\begin{corollary}\label{cor:UniversalProcess}
  Suppose $W$ is a sample covariance matrix satisfying the moment matching  condition (Definition~\ref{def:moment}) with $\mfd = N/M \To[M] d \in (0,\infty)$.  Then for any sequence of unit vectors $\vec b= \vec b_N$ of length $N,$ with $T=T(W,\vec b)$, let $H$ be given by the Cholesky factorization of of $T$, $H = \varphi(T)$ and label the entries of $H$ as in \eqref{eq:Hdef}.  Then Proposition~\ref{prop:dist-limit} holds for $H$.
\end{corollary}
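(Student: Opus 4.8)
The plan is to reduce the assertion to Corollary~\ref{cor:UniversalMoments} and to the Gaussian-case computation in Proposition~\ref{prop:dist-limit}, by observing that every coordinate appearing in Proposition~\ref{prop:dist-limit} is, after subtracting an exact Marchenko--Pastur quantity, a fixed functional of the moments $\{m_k(\mu_T)\}_{k\ge-1}$ of the spectral measure: the Cholesky entries $\alpha_j,\beta_j$ depend on finitely many of the $m_k$, $k\ge1$, through a map that is differentiable near the Marchenko--Pastur point, while $\vec f_1^*T^{-1}\vec f_1$ equals $(-1)$ times a contour integral of $c_0$ against $z^{-1}$.

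I would first record these functionals. By \eqref{eq:sandl} and the discussion following it, for each fixed $n$ the Jacobi matrix $T_n$ is a differentiable function of $(m_0,\dots,m_{2n})$ on the open set where the relevant Hankel determinants are positive; and because the Cholesky factorization of a Jacobi matrix is built top-to-bottom (Algorithm~\ref{a:chol}), its top-left block satisfies $\varphi(T)_{1:n,1:n}=\varphi(T_n)$, a smooth function of $T_n$ wherever $T_n>0$. At the Marchenko--Pastur point $m_k=m_k(\varrho_d)=\vec f_1^*\mathbb T_d^k\vec f_1$ all those Hankel determinants are strictly positive, since $\varrho_d$ has infinitely many points of support; hence the composite map $\Psi:(m_1,\dots,m_{2n})\mapsto(\alpha_0,\beta_0,\alpha_1,\dots,\alpha_{n-1})$ is differentiable near that point, and it sends $(m_1(\varrho_{\mfd}),\dots,m_{2n}(\varrho_{\mfd}))$ exactly to the entries $(1,\sqrt{\mfd},1,\dots)$ of $\mathbb H_{\mfd}$. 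For the inverse moment, $\vec f_1^*T^{-1}\vec f_1=\int\lambda^{-1}\mu_T(\sd\lambda)=-\frac1{2\pi i}\oint_{\Gamma_0}z^{-1}c_0(z;\mu_T)\sd z$ for any positively-oriented simple contour $\Gamma_0$ that encloses the spectrum of $W$ but not $z=0$; since $\mfd\to d\in(0,1)$ the left edge $\gamma_-=(1-\sqrt d)^2$ is bounded away from $0$, such a $\Gamma_0$ exists, and by eigenvalue rigidity (Proposition~\ref{p:rigidity}) this representation holds off an event of probability $O(M^{-D})$ for every $D$, which does not affect the distributional limit. The same computation for $\varrho_{\mfd}$ gives $m_{-1}(\varrho_{\mfd})=\frac1{1-\mfd}$, so $\vec f_1^*T^{-1}\vec f_1-\frac1{1-\mfd}=-\frac1{2\pi i}\oint_{\Gamma_0}z^{-1}(c_0(z;\mu_T)-s_{\mfd}(z))\sd z$.

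Next I would invoke Theorem~\ref{t:GC} with the finite family $f_1(z)=z^{-1}$ and $f_j(z)=z^{j-1}$ for $2\le j\le 2n+1$, all taken on the single contour $\Gamma_0$ (which encloses $\mathrm{supp}(\varrho_d)$ and excludes $0$, so each $f_j$ is analytic in a neighbourhood of its interior), comparing the given $W$ with the Gaussian Wishart $\tilde W\lawequals\mathcal W_{\beta}(N,M)$; the latter is a sample covariance matrix in the sense of Definition~\ref{def:SCMs}, and its first four generalized moments are matched by $W$ thanks to the moment-matching hypothesis. Testing against $\Phi(\vec x)=\cos\langle\vec t,\vec x\rangle$ and $\Phi(\vec x)=\sin\langle\vec t,\vec x\rangle$, $\vec t\in\R^{2n}$ (bounded, with all derivatives bounded, hence admissible test functions in the sense of Definition~\ref{d:testfun}), Theorem~\ref{t:GC} shows that the characteristic functions of the vector of rescaled contour integrals for $W$ and for $\tilde W$ share a common limit; since for $\tilde W$ that vector converges in distribution (to the joint limit supplied by Corollary~\ref{cor:GaussianPolyFCLT} and Proposition~\ref{prop:dist-limit}), Lévy continuity gives the same limit for $W$. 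Finally, the continuous mapping theorem — applied to the positive-index coordinates through $\Psi$ (equivalently, the delta method, using $(m_k(\varrho_{\mfd}))\to(m_k(\varrho_d))$ and the fact that the deterministic centerings agree on the nose, so no drift term appears) and to the inverse-moment coordinate through the displayed identity — turns this into the two limits asserted in Proposition~\ref{prop:dist-limit}, now for $H=\varphi(T(W,\vec b))$, with the same process $(Z_{-1},\mathcal Z/\sqrt{2})$.

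The only genuinely new point beyond Corollary~\ref{cor:UniversalMoments} is the treatment of $\vec f_1^*T^{-1}\vec f_1$: one represents it by a contour integral against $z^{-1}$ along a curve that separates the spectrum of $W$ from $0$ — available only because $d<1$ forces $\gamma_->0$ — and then discards the rare event on which an eigenvalue escapes that curve. The remaining ingredients — differentiability of the moment-to-Cholesky map at the Marchenko--Pastur point, the delta method, and the passage from agreement of bounded smooth test functionals to weak convergence — are routine once Theorem~\ref{t:GC} is available.
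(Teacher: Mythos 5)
Your proof is correct and takes essentially the same route as the paper's: the paper's (terse) argument also deduces the corollary from Theorem~\ref{t:GC} via the differentiability of the moments-to-Jacobi-to-Cholesky map on the set where the Hankel determinants are positive. You supply considerably more detail than the paper does, in particular spelling out the treatment of $\vec f_1^*T^{-1}\vec f_1$ via the contour integral of $c_0(z;\mu_T)-s_{\mfd}(z)$ against $z^{-1}$ along a curve separating $\mathrm{supp}(\varrho_d)$ from the origin (using $d<1$ and eigenvalue rigidity), which the paper leaves implicit.
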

\begin{proof}
Fix $k$.  For all $N > k,$ the Hankel matrix of moments $(m_{j+r-2}(\mu_T))_{j,r=1}^{k}$ is positive definite almost surely.  On this set, the mapping to $(m_j(\mu_T) : 0 \leq j \leq 2k) \mapsto T_k(W,\vec b)$ is differentiable.  It follows that $H_k$, the upper-left $k\times k$ subblock of $H$ is also a differentiable function $m_j(\mu_T)$, $j = 0,1,\ldots,2k$.  Then the corollary follows directly from Theorem~\ref{t:GC}.
\end{proof}

Before we prove Theorem~\ref{t:GC}, we establish some intermediate results.

\begin{lemma}
For an $N \times M$ matrix $X$ and $\Im z \neq 0$
\begin{align}\label{eq:schur}
    \begin{bmatrix} -I & X^* \\ X & -zI \end{bmatrix}^{-1} &= \begin{bmatrix}
        (z^{-1}XX^*-I)^{-1} & (X^*X-zI)^{-1}X^*\\
         X(X^*X-zI)^{-1}& (XX^*- zI)^{-1},\\
    \end{bmatrix}\\ \label{eq:schur_norm}
    \left\|\begin{bmatrix} -I & X^* \\ X & -zI \end{bmatrix}^{-1} \right\| &\leq (|z| + 1) |\Im z|^{-1} + 2 \sqrt{ |\Im z|^{-1} + |z| |\Im z|^{-2}}
\end{align}
\end{lemma}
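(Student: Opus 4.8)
The identity \eqref{eq:schur} is a standard block-matrix inversion, so I would verify it by directly multiplying the claimed inverse against $\begin{bmatrix} -I & X^* \\ X & -zI \end{bmatrix}$ and checking that one recovers the identity. The four blocks to check are: the $(1,1)$ block gives $-(z^{-1}XX^*-I)^{-1} + (X^*X-zI)^{-1}X^*X$; using $X^*X(X^*X-zI)^{-1} = I + z(X^*X-zI)^{-1}$ and the push-through identity $X(X^*X-zI)^{-1} = (XX^*-zI)^{-1}X$, this collapses to the identity block. The remaining three blocks are analogous short manipulations using the same push-through identity and the resolvent identity $A(A-zI)^{-1} = I + z(A-zI)^{-1}$. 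One small point to flag: the factor $z^{-1}$ requires $z \neq 0$, which is fine since we have assumed $\Im z \neq 0$, and the inverse $(z^{-1}XX^*-I)^{-1}$ equals $-(I - z^{-1}XX^*)^{-1} = -z(zI - XX^*)^{-1}$, so it is well-defined for $\Im z \neq 0$.

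For the norm bound \eqref{eq:schur_norm}, the plan is to bound each of the four blocks in operator norm using \eqref{eq:schur} and then use the fact that the norm of a $2\times 2$ block matrix is at most the sum (or, more crudely, twice the max) of the norms of its blocks — actually I would just bound the full matrix by the sum of the norms of the diagonal blocks plus the sum of the norms of the off-diagonal blocks. The two resolvents $(XX^*-zI)^{-1}$ and $(z^{-1}XX^*-I)^{-1} = -z(zI-XX^*)^{-1}$ have norms at most $|\Im z|^{-1}$ and $|z|\,|\Im z|^{-1}$ respectively, since $XX^*$ is self-adjoint (so $\|(XX^*-zI)^{-1}\| = \mathrm{dist}(z,\mathrm{spec}(XX^*))^{-1} \leq |\Im z|^{-1}$), and likewise for $X^*X$. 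For the off-diagonal blocks, I would write $X(X^*X-zI)^{-1}$ and bound its norm by $\|X(X^*X-zI)^{-1}\|$; squaring, $\|X(X^*X-zI)^{-1}\|^2 = \|(X^*X-\bar z I)^{-1}X^*X(X^*X-zI)^{-1}\|$, and using $X^*X = (X^*X - zI) + zI$ this is at most $\||X^*X-zI|^{-1}\| + |z|\,\||X^*X - zI|^{-2}\| \leq |\Im z|^{-1} + |z|\,|\Im z|^{-2}$, which gives $\|X(X^*X-zI)^{-1}\| \leq \sqrt{|\Im z|^{-1} + |z|\,|\Im z|^{-2}}$, matching the stated bound. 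Collecting: the diagonal contributes $|z|\,|\Im z|^{-1} + |\Im z|^{-1} = (|z|+1)|\Im z|^{-1}$ and the two off-diagonal blocks each contribute $\sqrt{|\Im z|^{-1} + |z|\,|\Im z|^{-2}}$, giving the claimed $(|z|+1)|\Im z|^{-1} + 2\sqrt{|\Im z|^{-1} + |z|\,|\Im z|^{-2}}$.

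The main obstacle — really the only place any care is needed — is getting the constants and the handling of the off-diagonal blocks exactly right, in particular making sure the bound on $\|X(X^*X-zI)^{-1}\|$ is derived cleanly via the $C^*$-identity $\|A\|^2 = \|A^*A\|$ rather than by a cruder submultiplicative estimate (which would give $\|X\|\,|\Im z|^{-1}$ and would not be dimension-free). Everything else is bookkeeping with the self-adjoint functional calculus applied to $XX^*$ and $X^*X$.
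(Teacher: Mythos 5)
Your proof is correct, and the paper in fact gives no proof of this lemma at all (it is stated without a proof environment), so there is nothing to compare against: the approach you take — direct block-matrix multiplication together with the spectral-theorem bound $\|(A-zI)^{-1}\|\le|\Im z|^{-1}$ for self-adjoint $A$, the $C^*$-identity $\|B\|^2=\|B^*B\|$ for the off-diagonal blocks, and the crude triangle inequality $\|G\|\le\sum_{i,j}\|G_{ij}\|$ — is exactly the natural one and reproduces the stated constants.

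One point you should flag explicitly rather than pass over: as printed, the $(1,1)$ block of the claimed inverse, $(z^{-1}XX^*-I)^{-1}$, is $N\times N$, whereas the $(1,1)$ block of $\begin{bmatrix}-I & X^*\\ X & -zI\end{bmatrix}$ is $M\times M$ (since $X^*$ is $M\times N$); the paper has a typo and the correct block is $(z^{-1}X^*X-I)^{-1}=z(X^*X-zI)^{-1}$. Your own first display, $-(z^{-1}XX^*-I)^{-1}+(X^*X-zI)^{-1}X^*X$, inherits this mismatch (it subtracts an $N\times N$ matrix from an $M\times M$ one). With the corrected block the verification is exactly as you describe and no push-through identity is even needed for the $(1,1)$ entry: $-z(X^*X-zI)^{-1}+X^*X(X^*X-zI)^{-1}=(X^*X-zI)(X^*X-zI)^{-1}=I$. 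The push-through identity $X(X^*X-zI)^{-1}=(XX^*-zI)^{-1}X$ is used, as you say, for the $(1,2)$ and $(2,1)$ entries. The norm bound is handled correctly; note also that the $(1,2)$ block is the adjoint (with $z\mapsto\bar z$) of the $(2,1)$ block, which is why both contribute the same $\sqrt{|\Im z|^{-1}+|z||\Im z|^{-2}}$ term.
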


Recall that $\vec f_1,\vec f_2, \ldots$ denotes the standard basis and we use the notation $ \hat {\vec u} = \begin{bmatrix}0 \\  \vec u \end{bmatrix} \in \mathbb C^{N + M}$ for $\vec u \in \mathbb C^N$.

\begin{lemma}[Resolvent expansion with leading-order correction] \label{l:resolve}
  Let $X$ be an iid matrix satisfying the assumptions of Definition~\ref{def:SCMs}. Let $Q$ be the matrix that is equal to $X$ with the exception of one entry that is set to zero so that $X = Q + X_{ij} \vec f_{i} \vec f_{j}^*$ for some $1 \leq i \leq N, 1 \leq j \leq M$.  For two unit vectors $\vec u, \vec v \in \mathbb C^N$
  \begin{align*}
    \hat {\vec u}^* S(z;X) \hat {\vec v} &= \hat {\vec u}^* S(z;Q) \hat {\vec v} + \sum_{k=1}^{3} M^{-k/2} J_k + M^{-5/2}J_4,\\
    S(z;X) &= \sqrt{M} \left[ G(z;X) - \Pi_d(z)\right],
\end{align*}
and for every $\epsilon > 0$ and $D > 0$ there exists $C > 0$ such that $J_4$ satisfies
\begin{align*}
    \mathbb P( |J_4| > M^\epsilon ) \leq C M^{-D}.
\end{align*}
In addition, $J_k$ for $k < 4$ is a finite sum of the form
\begin{align*}
    J_k = \sum_{\ell} f_{k,\ell} g_{k,\ell},
\end{align*}
where $g_{k,\ell}$ is a monomial in $ X_{ij}\sqrt{M}$ and $\overline{X_{ij}}\sqrt{M}$ with degree at most $k+1$ and $f_{k,\ell}$ is independent of $X_{ij}$ satisfying
that for every $\epsilon > 0$ and $D > 0$ there exists $C > 0$ such that
\begin{align*}
    \mathbb P( |f_{k,\ell}| > M^\epsilon ) \leq C M^{-D}.
\end{align*}
\end{lemma}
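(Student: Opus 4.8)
The plan is to run the Neumann expansion of the linearization resolvent of $X$ around that of $Q$ in powers of the rank-at-most-$2$ perturbation, truncate after the fourth-order term, and control the explicit low-order contributions and the remainder separately using the anisotropic local law (Proposition~\ref{prop:tail}) together with the deterministic bound \eqref{eq:schur_norm} and the eigenvalue bound (Theorem~\ref{t:geb}). Write $H_X$ and $H_Q$ for the $(M+N)\times(M+N)$ matrices $\begin{bmatrix} -I & X^* \\ X & -zI\end{bmatrix}$ and $\begin{bmatrix} -I & Q^* \\ Q & -zI\end{bmatrix}$, so that $G(z;\cdot)=H_{\cdot}^{-1}$, and let $V=H_X-H_Q$, whose only nonzero entries are $X_{ij}$ and $\overline{X_{ij}}$, located in the two off-diagonal blocks. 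Iterating the resolvent identity $G(z;X)=G(z;Q)-G(z;Q)\,V\,G(z;X)$ five times gives
\[
  G(z;X)=\sum_{m=0}^{4}(-1)^m\bigl(G(z;Q)V\bigr)^m G(z;Q)+(-1)^5\bigl(G(z;Q)V\bigr)^5 G(z;X).
\]
Multiplying by $\sqrt M$, subtracting $\sqrt M\,\Pi_d(z)$ (which occurs identically in $S(z;X)$ and $S(z;Q)$, and so cancels), and pairing with $\hat{\vec u},\hat{\vec v}$ produces an identity of the advertised shape: the $m=0$ term is $\hat{\vec u}^*S(z;Q)\hat{\vec v}$, the terms $m=1,\dots,4$ will be reorganized into $M^{-1/2}J_1+M^{-1}J_2+M^{-3/2}J_3$, and the $m=5$ term becomes the remainder $M^{-5/2}J_4$.

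For the low-order terms I would substitute the two rank-one pieces of $V$ into each $\bigl(G(z;Q)V\bigr)^m G(z;Q)$, writing it as a sum of $2^m$ scalars, each a monomial of degree $m$ in $\sqrt M X_{ij},\sqrt M\overline{X_{ij}}$ (times $M^{-m/2}$) multiplied by a product of $m+1$ entries of $G(z;Q)$ whose index pairs alternate across the two coordinate blocks. Since $\hat{\vec u},\hat{\vec v}$ both live in the ``$N$-block'' while each factor of $V$ links the ``$M$-block'' to the ``$N$-block'', a short parity argument on the associated chain of block labels shows that the number of entries of $G(z;Q)$ joining the two blocks has the same parity as $m$ (in particular it is positive when $m$ is odd), and in every case it is just large enough that, once these cross-block entries supply their savings, the normalizing $\sqrt M$ is absorbed. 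Because $\Pi_d(z)$ is block diagonal, each cross-block entry of $G(z;Q)$ is $O(M^{\epsilon-1/2})$ with overwhelming probability by Proposition~\ref{prop:tail}, while each within-block entry is within $O(M^{\epsilon-1/2})$ of the corresponding entry of $\Pi_d(z)$ and hence $O(M^{\epsilon})$ with overwhelming probability. Collecting, each $m$-th order scalar takes the form $M^{-k/2}f_{k,\ell}g_{k,\ell}$ with $g_{k,\ell}$ the entry-monomial (degree $m\le k+1$) and $f_{k,\ell}$ a fixed product of $G(z;Q)$-entries and an explicit power of $M$, independent of $X_{ij}$, with $|f_{k,\ell}|\le M^{\epsilon}$ with overwhelming probability. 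To use Proposition~\ref{prop:tail} and Theorem~\ref{t:geb} for $Q$ rather than $X$ (recall that $Q$, having one entry set to zero, is not literally a sample covariance matrix) I would write $G(z;Q)=G(z;X)\bigl(I-V\,G(z;X)\bigr)^{-1}$ and use $\|V\|\le 2|X_{ij}|=O(M^{\epsilon-1/2})$ together with $\|G(z;X)\|=O(1)$ with overwhelming probability on $\Gamma$ (from the block decomposition \eqref{eq:schur}, Theorem~\ref{t:geb} and \eqref{eq:schur_norm}); expanding the convergent Neumann series transfers every entrywise bound from $X$ to $Q$ up to an $O(M^{\epsilon-1/2})$ error.

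For the remainder $R_5:=-\sqrt M\,\hat{\vec u}^*\bigl(G(z;Q)V\bigr)^5 G(z;X)\hat{\vec v}$ I would again expand the five copies of $V$, obtaining $2^5$ scalars, each equal to $\sqrt M$ times a product of five factors $|X_{ij}|$ times a product of six resolvent entries (the last involving $G(z;X)$). A single-entry Markov bound applied to $\mathbb E|\sqrt M X_{ij}|^{p}\le C_p$ yields $|X_{ij}|\le M^{\epsilon-1/2}$ with overwhelming probability; the six resolvent entries are bounded as above (the last one using the local law for $X$ itself and \eqref{eq:schur_norm}), and the same parity argument guarantees at least one of them is cross-block, hence $O(M^{\epsilon-1/2})$. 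Therefore $|R_5|\le \sqrt M\cdot M^{5(\epsilon-1/2)}\cdot M^{\epsilon-1/2}\cdot M^{5\epsilon}=M^{-5/2}M^{C\epsilon}$ with overwhelming probability, so setting $J_4:=M^{5/2}R_5$ and relabeling $\epsilon$ gives $\mathbb P(|J_4|>M^{\epsilon})\le CM^{-D}$.

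The main obstacle I anticipate is not any single estimate but the bookkeeping: verifying that the block-parity argument really does place every term of the fifth-order expansion into the correct $M^{-k/2}J_k$ slot with the stated degree bound on $g_{k,\ell}$, so that the $\sqrt M$ normalization is never under-cancelled by the small cross-block entries; and making the transfer of Proposition~\ref{prop:tail} and Theorem~\ref{t:geb} from $X$ to the one-entry-deleted matrix $Q$ fully rigorous, uniformly for $z\in\Gamma$.
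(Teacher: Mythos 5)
Your proposal follows essentially the same route as the paper's proof: a five-fold iteration of the resolvent identity for the linearization $G(z;\cdot)$, expansion of the at-most-rank-two perturbation $V$ into its two pieces so that each order-$m$ term becomes a sum of $2^m$ scalars, a block-parity observation that forces at least one cross-block (hence, since $\Pi_d$ is block diagonal, $O(M^{\epsilon-1/2})$) $G$-entry in every odd-order term, and the anisotropic local law (Proposition~\ref{prop:tail}) to bound the individual factors; the reorganization of orders $m=1,\dots,4$ into $J_1,J_2,J_3$ with the degree constraint $\deg g_{k,\ell}\le k+1$ matches the paper's assignment $J_1=M^{1/2}(E_{ij}+P^{(2)}_{ij}),\ J_2=MP^{(3)}_{ij},\ J_3=M^{3/2}P^{(4)}_{ij},\ J_4=M^{5/2}P^{(5)}_{ij}$. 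The one genuine addition in your write-up is the explicit transfer of the local law from $X$ to the one-entry-zeroed matrix $Q$ by a convergent Neumann series in $V$ --- a step the paper uses implicitly when it asserts the bounds for $G(z;Q_\gamma)$ and $S(z;Q_\gamma)$, so your spelled-out argument is a useful gap-fill rather than a different method.
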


\begin{proof}
Write $V := X_{ij} \vec f_{i+M} \vec f_{j}^*  +  \overline{X_{ij}} \vec f_{j} \vec f_{i+M}^*$.   Consider for a diagonal matrix $D$
\begin{align}
    \hat {\vec u}^* DVD \hat {\vec v} &= X_{ij} (\hat {\vec u}^*D\vec f_{i+M})(\vec f_{j}^*D\hat {\vec v}) + \overline{X_{ij}} (\hat {\vec u}^*D\vec f_{j}) (\vec f_{i+M}^*D\hat {\vec v}) \notag \\
   &= X_{ij} (\hat {\vec u}^*D_{i+M,i+M}\vec f_{i+M})(\vec f_{j}^*D_{jj}\hat {\vec v}) + \overline{X_{ij}} (\hat {\vec u}^*D_{jj}\vec f_{j}) (\vec f_{i+M}^*D_{i+M,i+M}\hat {\vec v}) = 0.\label{eq:vanish}
\end{align}
This is because $1\leq j \leq M$ and $\hat {\vec u}, \hat {\vec v}$ must have zeros in their first $M$ entries.

We then consider the expansion of
\begin{align*}
    S(z;X) &= S(z;Q) - \sqrt{M} G(z;Q)V G(z;Q) + \cdots\\
    & + \sqrt{M}[G(z;Q)V]^4 G(z;Q)\\
    & - \sqrt{M}[G(z;Q)V]^{5}G(z;X).
\end{align*}
We write
\begin{align*}
    \hat {\vec u}^* &\sqrt{M} G(z;Q)V G(z;Q) \hat {\vec v} 
    =  \sqrt{M} \hat {\vec u}^*\Pi_d(z) V\Pi_d(z) \hat {\vec v}  + E_{ij}.
\end{align*}
From \eqref{eq:vanish} the first term vanishes.  Explicitly,
\begin{align*}
    E_{ij} &=  \hat {\vec u}^* \left[ S(z;Q)V G(z;Q) + G(z;Q)V S(z;Q) + M^{-1/2} S(z;Q)V S(z;Q) \right] \hat {\vec v},\\
    & =  X_{ij} (\hat {\vec u}^* S(z;Q) \vec f_{j+M})( \vec f_i^* G(z,Q) \hat {\vec v}) + \overline{X_{ij}} ( \hat {\vec u}^* G(z;Q) \vec f_{i})( \vec f_{j+M}^* S(z,Q) \hat {\vec v})\\
    & + X_{ij} (\hat {\vec u}^* G(z;Q) \vec f_{j+M})( \vec f_i^* S(z,Q) \hat {\vec v}) + \overline{X_{ij}} ( \hat {\vec u}^* S(z;Q) \vec f_{i})( \vec f_{j+M}^* G(z,Q) \hat {\vec v})\\
    & +  M^{-1/2}X_{ij} (\hat {\vec u}^* S(z;Q) \vec f_{j+M})( \vec f_i^* S(z,Q) \hat {\vec v}) + M^{-1/2}\overline{X_{ij}} ( \hat {\vec u}^* S(z;Q) \vec f_{i})( \vec f_{j+M}^* S(z,Q) \hat {\vec v}).
\end{align*}
Observe that this is a linear function of $X_{ij}, \overline{X_{ij}}$ with coefficients that are independent of $X_{ij}$ and controlled by Proposition~\ref{prop:tail}.

Then consider
\begin{align*}
    \hat {\vec u}^* (G(z;Q) V)^j G(z;Q) \hat {\vec v}.
\end{align*}
With the notation $a_1 = X_{ij}, a_2 = \overline{X_{ij}}$, $\vec v_1 = \vec f_{j+M}, \vec v_2 = \vec f_i$, and $\vec w_1 = \vec f_{i}, \vec w_2 = \vec f_{j+M}$ one has for $\ell = 2,3,4$
\begin{align*}
    & \sqrt{M} \hat {\vec u}^* (G(z;Q) V)^\ell G(z;Q) \hat {\vec v} \\&= \sqrt{M} \sum_{p \in \{1,2\}^\ell}\left[\left(\prod_{k=1}^\ell a_{p_k} \right) (\hat {\vec u}^* G(z;Q) \vec v_{p_1}) ( \vec w_{p_\ell}^* G(z;Q) \hat{\vec v}) \prod_{k=1}^{\ell-1} (\vec w_{p_k}^* G(z;Q) \vec v_{p_{k+1}}) \right]:= P^{(\ell)}_{ij}
\end{align*}
and set
\begin{align*}
    P^{(5)}_{ij} := \sqrt{M}\sum_{p \in \{1,2\}^5}\left[\left(\prod_{k=1}^5 a_{p_k} \right) (\hat {\vec u}^* G(z;Q) \vec v_{p_1}) ( \vec w_{p_\ell}^* G(z;X) \hat{\vec v}) \prod_{k=1}^{4} (\vec w_{p_k}^* G(z;Q) \vec v_{p_{k+1}}) \right].
\end{align*}
Whenever two vectors are orthogonal because they have disjoint support, we can replace $G(z)$ with $S(z)/\sqrt{M}$.  When $\ell$ is odd,  suppose that for a choice of $p \in \{1,2\}^\ell$ no two vectors are orthogonal in such a way.  Then $p_1 = 1$ so that $\hat {\vec u}$ is not orthogonal to $\vec v_{p_1}$.  And then $\vec w_{i}$ and $\vec v_j$ are not orthogonal if $i \neq j$, so then $p_2 = 2$, $p_3 =1$, and so on.  This implies that $p_\ell = 1$ because $\ell$ is odd.  But then $\hat {\vec v}$ is orthogonal to $\vec v_{p_\ell}$.  This implies that the order of the odd terms is actually one less than is immediately apparent. 
Write
\begin{align*}
    \hat {\vec u}^* S(z;X) \hat {\vec v} &= \hat {\vec u}^* S(z;Q) \hat {\vec v} + \sum_{k=1}^{3} M^{-k/2} J_k + M^{-5/2}J_4 =\hat {\vec u}^* S(z;Q) \hat {\vec v} + \xi, \\
    J_1 &= M^{1/2}(E_{ij} + P_{ij}^{(2)}), \quad J_2 = M P_{ij}^{(3)},\\
    J_3 &= M^{3/2} P_{ij}^{(4)}, \quad J_4 = M^{5/2} P_{ij}^{(5)}.
\end{align*}
\end{proof}

\begin{proposition}[Green's function replacement]\label{p:replace}
  Suppose $\Phi$ is an admissible test function.    Suppose further that $X$ and $Y$ are two matrices satisfying assumptions in Definition~\ref{def:SCMs} and that
  \begin{align*}
      \mathbb E X_{ij}^\ell \overline{X_{ij}}^p = \mathbb E Y_{ij}^\ell \overline{Y_{ij}}^p,
  \end{align*}
  for all choices of $\ell,p \in \mathbb N$, $\ell + p \leq 4$ and $1 \leq i \leq N, 1 \leq j \leq M$.  Then for any $\epsilon > 0$, any families of unit vectors $\{\vec q_j\}_{j=1}^n$, $\{\vec p_j\}_{j=1}^n$, and any collection of points $\{z_j\}_{j=1}^n$ bounded uniformly away from the support of the Marchenko--Pastur law $\varrho_d$ and bounded away from the real axis by $M^{-\delta}$, $1 > \delta > 0$ we have 
  \begin{align*}
      \left|\mathbb E \Phi( \hat {\vec q}_1^* S(z_1,X) \hat {\vec p}_1,\ldots, \hat {\vec q}_n^* S(z_n,X) \hat {\vec p}_n) - \mathbb E \Phi( \hat {\vec q}_1^* S(z_1,Y) \hat {\vec p}_1,\ldots, \hat {\vec q}_n^* S(z_n,Y) \hat {\vec p}_n) \right|\leq C n^5 M^{-1/2 + C' \epsilon},
\end{align*}  
where $C'> 0$ depends only $C_0$ in Definition~\ref{d:testfun}.
\end{proposition}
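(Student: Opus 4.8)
The plan is to run a Lindeberg-type Green's function comparison (swapping) argument. First I would fix an enumeration of the $NM$ entry-positions of an $N\times M$ matrix and, for $0\le t\le NM$, let $X^{(t)}$ be the matrix agreeing with $Y$ on the first $t$ positions and with $X$ on the rest, so $X^{(0)}=X$ and $X^{(NM)}=Y$. Writing $\Psi(Z):=\Phi\bigl(\hat{\vec q}_1^*S(z_1,Z)\hat{\vec p}_1,\dots,\hat{\vec q}_n^*S(z_n,Z)\hat{\vec p}_n\bigr)$, the quantity to be bounded telescopes, $\mathbb E\Psi(X)-\mathbb E\Psi(Y)=\sum_{t=1}^{NM}\bigl(\mathbb E\Psi(X^{(t-1)})-\mathbb E\Psi(X^{(t)})\bigr)$, and for each $t$ the matrices $X^{(t-1)}$ and $X^{(t)}$ differ only at one position $(i,j)$; letting $Q$ be their common value with that entry set to zero, $X^{(t-1)}=Q+X_{ij}\vec f_i\vec f_j^*$, $X^{(t)}=Q+Y_{ij}\vec f_i\vec f_j^*$, and $Q$ — hence every resolvent entry of $G(z;Q)$ — is independent of the swapped entry.

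For a single swap I would apply Lemma~\ref{l:resolve} to each of the $n$ functionals, writing $\hat{\vec q}_\ell^*S(z_\ell;X^{(t-1)})\hat{\vec p}_\ell=a_\ell^Q+\xi_\ell$, where $a_\ell^Q:=\hat{\vec q}_\ell^*S(z_\ell;Q)\hat{\vec p}_\ell$ is independent of $X_{ij}$ and $\xi_\ell$ is, up to an $O_\prec(M^{-5/2})$ remainder, a polynomial of degree at most four in $(\sqrt M X_{ij},\overline{\sqrt M X_{ij}})$ all of whose coefficients are independent of $X_{ij}$ and — by Lemma~\ref{l:resolve}, Proposition~\ref{prop:tail} and eigenvalue rigidity (Proposition~\ref{p:rigidity}) — are $O_\prec(M^{-1/2})$; in particular $a_\ell^Q=O_\prec(1)$, $\|\vec\xi\|=O_\prec(M^{-1/2})$, and $\|G(z_\ell;Q)\|=O_\prec(1)$ since $z_\ell$ is bounded away from the support of $\varrho_d$. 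Hence with overwhelming probability $(a_\ell^Q)_\ell$ lies in the ball of radius $M^{\epsilon'}$ on which admissibility of $\Phi$ gives $|\partial^\alpha\Phi|\le M^{C_0\epsilon'}$ for $|\alpha|\le5$. Taylor expanding $\Psi(X^{(t-1)})$ to fourth order about $(a_\ell^Q)_\ell$, the fifth-order remainder is, on that event, $\le M^{C_0\epsilon'}O(M^{-5/2})$ on each of the $O(n^5)$ multi-indices $\beta$ with $|\beta|=5$, hence $O_\prec(n^5M^{-5/2+C_0\epsilon'})$ per swap; summing over the $NM=O(M^2)$ swaps (and performing the same expansion for $Y$) contributes $O_\prec(n^5M^{-1/2+C_0\epsilon'})$, already within the claimed bound.

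It then remains to treat the Taylor terms of order $\le4$, namely $\tfrac1{\beta!}\partial^\beta\Phi\bigl((a_\ell^Q)_\ell\bigr)\prod_\ell\xi_\ell^{\beta_\ell}$ with $|\beta|\le4$. I would expand each $\prod_\ell\xi_\ell^{\beta_\ell}$ into monomials in $(\sqrt M X_{ij},\overline{\sqrt M X_{ij}})$ with $X_{ij}$-independent coefficients and take the conditional expectation in $X_{ij}$ (which is independent of $Q$ and of $\vec q_\ell,\vec p_\ell,z_\ell$). Monomials of total degree $\le4$ produce moments $\mathbb E X_{ij}^\ell\overline{X_{ij}}^p$ with $\ell+p\le4$, which by hypothesis equal the corresponding moments of $Y_{ij}$; since the accompanying coefficients are identical for $X^{(t-1)}$ and $X^{(t)}$, these terms cancel exactly in the difference. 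The monomials of total degree $\ge5$ do not cancel, but by the structure of Lemma~\ref{l:resolve} each $\xi_\ell$ contributes a degree-$d$ monomial with weight $M^{-1/2}$ when $d\le2$ and with strictly more powers of $M^{-1/2}$ when $d\ge3$ (the extra gain for odd $d$ coming from a resolvent factor that connects a top-block standard vector to $\hat{\vec p}_\ell$ across the block structure of $\Pi_d$), while the surviving moment is bounded via $\mathbb E|\sqrt M X_{ij}|^p\le C_p$. A per-swap estimate is too lossy here; instead I would sum over all $NM$ swaps first and use the Ward-type bounds $\sum_i|\hat{\vec q}_\ell^*G(z_\ell;Q)\vec f_{i+M}|^2\le\|G(z_\ell;Q)\|^2\|\hat{\vec q}_\ell\|^2=O_\prec(1)$ and their analogues with $\hat{\vec p}_\ell$, together with $\|\vec q_\ell\|=\|\vec p_\ell\|=1$, to reduce the accumulated products of resolvent entries at the swap locations to sums of size $O_\prec(M)$; this absorbs the $M^2$ from the number of swaps and leaves a net contribution $O_\prec(M^{-1/2+C'\epsilon})$.

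Finally I would assemble these estimates: on the overwhelming-probability event on which all of the above holds, telescoping gives $|\Psi(X)-\Psi(Y)|\le Cn^5M^{-1/2+C'\epsilon}$, while on its complement (of probability $O_\prec(M^{-D})$ for every $D$) one uses $|\Psi(X)-\Psi(Y)|\le2\|\Phi\|_\infty$; taking expectations yields the stated inequality. The hard part will be the bookkeeping in the third paragraph — organizing the degree-$\ge5$ monomials so that, after summation over the $NM$ swaps, the products of resolvent entries are tamed by $\ell^2$-summation and the unit normalization of $\vec q_\ell,\vec p_\ell$; this is the essential mechanism behind any four-moment theorem, whereas the fifth-order Taylor remainder (responsible for the factor $n^5$) needs only the crude per-swap bound.
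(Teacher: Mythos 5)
Your proposal follows the same Lindeberg/Green's-function comparison scheme as the paper's proof: the identical telescoping over single-entry swaps, the same use of Lemma~\ref{l:resolve} to expand the bilinear form of the self-improving error $S$, a fifth-order Taylor expansion of $\Phi$, cancellation of the moment-matched part, an overwhelming-probability event to access the admissibility bound $|\partial^\alpha\Phi|\le M^{C_0\epsilon}$, and a crude $2\|\Phi\|_\infty$ bound on the bad event. The factor $n^5$ from the count of order-$5$ multi-indices is also handled the same way.

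Where you genuinely diverge from the paper is in your third paragraph. The paper's writeup collects powers of $M^{-1/2}$ from the combined Taylor/resolvent expansion, calls the coefficients $T_{1,\gamma},\dots,T_{4,\gamma}$, and asserts that $\Exp T_{k,\gamma}$ for $k\le 4$ only involves moments $\Exp M^{(\ell+p)/2}X_{\phi(\gamma)}^\ell\overline{X}_{\phi(\gamma)}^p$ with $\ell+p\le 4$; once that is granted, moment matching kills all $T_{k,\gamma}$ with $k\le4$ and the remaining error is the per-swap $O_\prec(M^{-5/2})$ fifth-order Taylor remainder, summed over $NM$ swaps. Your proposal instead points out that, with the degree bound of Lemma~\ref{l:resolve} ($J_k$ of degree at most $k+1$ in $\sqrt{M}X_{ij}$), products such as the $J_1^3$ piece of $\xi_\ell^3$ at weight $M^{-3/2}$ carry monomials of degree up to $6$, so monomials of degree $\ge 5$ do appear at $M$-weights below $M^{-5/2}$; a naive per-swap estimate for those is only $O_\prec(M^{-3/2})$, and you propose to rescue them by summing over all $NM$ swap locations and exploiting the $\ell^2$/unit-vector structure of the resolvent-entry coefficients (the factors $(\hat{\vec q}_\ell^*G\vec f_{i+M})\approx s_d(\vec q_\ell)_i$ etc.). That observation is correct, and it is a point the paper's proof treats implicitly: the tight degree-versus-weight accounting that makes the assertion on $T_{k,\gamma}$ hold after expectation really rests on exactly the $\ell^2$ gain you describe (the $(\vec q_\ell)_i,(\vec p_\ell)_i$ factors coming from $\Pi_d$ make the sum over $i$ finite and the sum over $j$ an $\ell^1$/$\ell^2$ computation), so your route is a more explicit version of what has to be happening. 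The tradeoff is that the paper's phrasing is shorter, while yours flags the hard bookkeeping step that both proofs ultimately leave at the same level of detail. In short: same approach, with your paragraph 3 being a more careful and more honest treatment of the only genuinely delicate step; nothing in your sketch is wrong, but nothing there is any more complete than the paper's own account.
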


\newcommand{\oneto}[1]{\llbracket 1, #1 \rrbracket}
\begin{proof}
The following proof is adapted from \cite[Theorem~16.1]{Erdos2017} and \cite{Knowles2017}.
Let $\phi: \oneto{MN} \to \oneto{N} \times \oneto{M} $ be a bijection\footnote{Here $\oneto{N} = \{1,2,\ldots,N\}$.}.  For $\gamma \in \oneto{MN}$ define $X_\gamma$ by
\begin{align*}
    (X_\gamma)_{\phi(\ell)} = \begin{cases} Y_{\phi(\ell)} & \ell \leq \gamma, \\
    X_{\phi(\ell)} & \ell > \gamma. \end{cases}
\end{align*}
Note that $X_0 = X$ and $X_{MN} = Y$ and that $X_\gamma$ and $X_{\gamma+1}$ differ only in the $\phi(\gamma+1)$ entry.  Define $Q_\gamma$ by $(Q_\gamma)_{\phi(\ell)} = (X_{\gamma+1})_{\phi(\ell)}$ if $\ell \neq \gamma +1$ and $(Q_\gamma)_{\phi(\gamma + 1)} = 0$, so that $Q_\gamma$ has a zero in the exact entry where $X_\gamma$ and $X_{\gamma + 1}$ differ.  We then compare $X_\gamma$ to $Q_\gamma$ using Lemma~\ref{l:resolve} and a fifth-order Taylor expansion of $\Phi$
\begin{align*}
    &\Phi( \hat {\vec q}_1^* S(z_1,X_\gamma) \hat {\vec p}_1,\ldots, \hat {\vec q}_n^* S(z_n,X_\gamma) \hat {\vec p}_n) \\
    &= \Phi( \hat {\vec q}_1^* S(z_1,Q_\gamma) \hat {\vec p}_1 + \xi_1,\ldots, \hat {\vec q}_n^* S(z_n,Q_\gamma) \hat {\vec p}_n + \xi_n )\\
    & = \Phi( \hat {\vec q}_1^* S(z_1,Q_\gamma) \hat {\vec p}_1,\ldots, \hat {\vec q}_n^* S(z_n,Q_\gamma) \hat {\vec p}_n  )\\
    & + \sum_{k=1}^4\sum_{|\alpha| = k} \partial^\alpha \Phi( \hat {\vec q}_1^* S(z_1,Q_\gamma) \hat {\vec p}_1,\ldots, \hat {\vec q}_n^* S(z_n,Q_\gamma) \hat {\vec p}_n  ) \frac{\vec \xi^\alpha}{\alpha!} \\
    & + \sum_{|\alpha| = 5} \partial^\alpha \Phi( \hat {\vec q}_1^* S(z_1,Q_\gamma) \hat {\vec p}_1 + c\xi_1,\ldots, \hat {\vec q}_n^* S(z_n,Q_\gamma) \hat {\vec p}_n + c\xi_n ) \hat {\vec p}_n  ) \frac{\vec \xi^\alpha}{\alpha!},
\end{align*}
for some $0\leq c \leq 1$. Here $\vec \xi = (\xi_1,\ldots,\xi_n)$ and $\xi_j = \sum_{k=1}^5 M^{-k/2} J_{k,j}$, $J_{5,j} = 0$ represents the $\xi$ term in Lemma~\ref{l:resolve} applied to $\hat {\vec q_j}, \hat {\vec p_j}$, $z_j$ and $X_\gamma$.  We rewrite this expansion by collecting powers of $M^{1/2}$
\begin{align*}
    &\Phi( \hat {\vec q}_1^* S(z_1,X_\gamma) \hat {\vec p}_1,\ldots, \hat {\vec q}_n^* S(z_n,X_\gamma) \hat {\vec p}_n) \\
    & = \Phi( \hat {\vec q}_1^* S(z_1,Q_\gamma) \hat {\vec p}_1,\ldots, \hat {\vec q}_n^* S(z_n,Q_\gamma) \hat {\vec p}_n  ) + \sum_{k=1}^4 M^{-k/2} T_{k,\gamma}.
\end{align*}
By independence $\mathbb E[T_k]$ for $k \leq 4$ decomposes into a sum of terms that are a product of a quantity depending only on moments  $X_{\phi(\gamma+1)}$, $\mathbb E M^{(l+p)/2}X_{\phi(\gamma+1)}^\ell \overline{X}_{\phi(\gamma+1)}^p $, $p + \ell \leq 4$ and a quantity depending on other variables.  Then, an estimate is needed for $\Exp T_k$.

 For $\epsilon> 0$ and $D > 0$, let $\mathcal E_{Q_\gamma}$ be the event where
\begin{align*}
    \max_{k,\ell,j} [| \vec {v}_k^* G(z_\ell,Q_\gamma){\vec w}_{j}| + |{\vec v}_k^* S(z_\ell,Q_\gamma) {\vec w}_{j}|] > M^\epsilon,
\end{align*}
and the families of vectors $\{\vec v_k\}$ and $\{\vec w_k\}$ are given by the union of the families $\{\hat {\vec q}_k\}$ and $\{\hat {\vec p}_k\}$ with the standard basis vectors, respectively. 
  Then there exists a constant $C> 0$, independent of $\gamma$, such that the probability of this event is bounded above by $C M^{-D}$.  Also, let $\mathcal X_\gamma$ be the event where
  \begin{align*}
      \sqrt{M} |X_{\phi(\gamma)}| > M^\epsilon.
  \end{align*}
  We use the a priori bound $\| G(z_\ell,X_\gamma) \| \leq C M^\delta$ (see \eqref{eq:schur_norm})  and that
  \begin{align*}
      \vec {v}_k^*G(z_\ell;X_\gamma){\vec w}_{j} &= \vec {v}_k^*G(z_\ell;Q_\gamma){\vec w}_{j}\\
      &- \vec {v}_k^*G(z_\ell;Q_\gamma) (X_\gamma - Q_{\gamma}) G(z;Q_\gamma){\vec w}_{j} \\
      & +  \vec {v}_k^*G(z_\ell;Q_\gamma) (X_\gamma - Q_{\gamma})G(z_\ell;Q_\gamma) (X_\gamma - Q_{\gamma}) G(z_\ell;X_\gamma){\vec w}_{j}.
  \end{align*}
  On the event $\mathcal E_{Q_\gamma}^c \cap \mathcal X_\gamma^c$
  \begin{align*}
      |\vec {v}_k^*G(z_\ell;X_\gamma){\vec w}_{j}| \leq M^\epsilon + 2 M^{3 \epsilon -1/2} + 4 C M^{5 \epsilon - 1 + \delta}
  \end{align*}
  Using an expansion to the next order, one obtains
\begin{align*}
    |\vec {v}_k^*S(z_\ell;X_\gamma){\vec w}_{j}| \leq 2 M^{3 \epsilon } + 4 M^{5 \epsilon - 1/2} + 8C M^{7 \epsilon - 1 + \delta}
\end{align*}
  Provided that $4 \epsilon -1 + \delta \leq 0$ and  we have that
  \begin{align*}
    \max_{k,\ell,j} [| \vec {v}_k^* G(z_\ell,X_\gamma){\vec w}_{j}| + |{\vec v}_k^* S(z_\ell,X_\gamma) {\vec w}_{j}|] \leq C' M^{3\epsilon},
\end{align*}
for a new constant $C'$.

Now, consider
\begin{align*}
    \mathbb E \Phi = \mathbb E \Phi (\mathbbm 1_{\mathcal X_\gamma} + \mathbbm 1_{\mathcal X_\gamma^c} )(\mathbbm 1_{\mathcal E_{Q_\gamma}} + \mathbbm 1_{\mathcal E_{Q_\gamma}^c} ),
\end{align*}
where $|\Phi| \leq 1$, without loss of generality.   Then for every $D > 0$ there exists $C > 0$ such that
\begin{align*}
    |\mathbb E \Phi \mathbbm 1_{\mathcal X_\gamma}  \mathbbm 1_{\mathcal E_{Q_\gamma}^c} + \mathbb E \Phi \mathbbm 1_{\mathcal X_\gamma^c} \mathbbm 1_{\mathcal E_{Q_\gamma}} + \mathbb E \Phi \mathbbm 1_{\mathcal X_\gamma^c} \mathbbm 1_{\mathcal E_{Q_\gamma}^c} | \leq C M^{-D}.
\end{align*}
We need to consider
\begin{align*}
    \Exp T_{k,\gamma} \mathbbm 1_{\mathcal X_\gamma^c}\mathbbm{1}_{\mathcal E_{Q_\gamma}^c }.
\end{align*}
First,
\begin{align*}
    |\Exp T_{5,\gamma} \mathbbm{1}_{\mathcal E_{Q_\gamma}^c}\mathbbm{1}_{\mathcal E_{X_\gamma,M}^c} | \leq 1024n^5 M^{3(C_0+8)\epsilon-5/2} \max_{1 \leq k \leq 25} \mathbb E|\sqrt{M}X_{\phi(\gamma)}|^k
\end{align*}
where $M^{3C_0\epsilon}$ is the upper bound on all derivatives of $\Phi$, and $1024n^5$ is a bound on the number of terms in the Taylor expansion.  For $T_{k,\gamma}$ we note that for any $D > 0$ there exists a constant $C > 0$ such that
\begin{align*}
    |\Exp T_{k,\gamma} \mathbbm{1}_{\mathcal E_{Q_\gamma}^c}\mathbbm{1}_{\mathcal E_{X_\gamma,M}^c} -  \Exp T_{k,\gamma} \mathbbm{1}_{\mathcal E_{Q_\gamma}^c}| \leq C M^{-D}.
\end{align*}
So, we can write
\begin{align*}
    \left|\Exp \Phi( \hat {\vec q}_1^* S(z_1,X_\gamma) \hat {\vec p}_1,\ldots, \hat {\vec q}_n^* S(z_n,X_\gamma) \hat {\vec p}_n) - \sum_{k=1}^4 M^{-k/2} L_k \right| \leq C n^5 M^{3(C_0+8)\epsilon-5/2}
\end{align*}
where $L_k$ depends only on $Q_\gamma$ and the moments of $X_{\phi(\gamma)}$ up to order $4$.  The proposition follows using
\begin{align*}
    \Exp &\Phi( \hat {\vec q}_1^* S(z_1,X) \hat {\vec p}_1,\ldots, \hat {\vec q}_n^* S(z_n,X) \hat {\vec p}_n) - \Exp \Phi( \hat {\vec q}_1^* S(z_1,Y) \hat {\vec p}_1,\ldots, \hat {\vec q}_n^* S(z_n,Y) \hat {\vec p}_n)
    \\ &= \sum_{\gamma=1}^{NM} \Exp \Phi( \hat {\vec q}_1^* S(z_1,X_\gamma) \hat {\vec p}_1,\ldots, \hat {\vec q}_n^* S(z_n,X_\gamma) \hat {\vec p}_n) - \Exp \Phi( \hat {\vec q}_1^* S(z_1,X_{\gamma+1}) \hat {\vec p}_1,\ldots, \hat {\vec q}_n^* S(z_n,X_{\gamma+1}) \hat {\vec p}_n).
\end{align*}
\end{proof}

We recall well-known important facts about trapezoidal rule applied to approximate contour integrals on smooth closed curves.  Suppose $\Gamma$ is such a curve of length one with arc length parameterization $\ell : [0,1] \to \Gamma$.   We choose $\ell$ so that $\ell(0),\ell(1/2) \in \mathbb R$ and $\ell(0) < \ell(1/2)$. With $m$ points, the trapezoidal rule can be used at the nodes $t_j = j/m$ for $j = 0,1,\ldots,m$.  In our case, however, we wish to avoid evaluating on the real axis and we choose $s_j^{(m)} = s_j = (t_j + t_{j+1})/2 = (2j + 1)/(2m)$, $j = 0,1,\ldots,m$ with the convention that $s_m = s_0$.  Consider
\begin{align*}
    \oint_\Gamma f(z) \sd z &= \int_0^1 f(\ell(s)) \ell'(s) \sd s \approx \sum_{j=0}^{m-1} f(\ell(s_j)) \frac{\ell'(s_j)}{m} = \sum_{j=0}^{m-1} f(z_j) w_j,\\
    z_j^{(m)}& = z_j = \ell(s_j), \quad w_j^{(m)} = w_j = \frac{\ell'(s_j)}{m}.
\end{align*}
Using the Euler--Maclaurin formula, for every $D > 0$ there exists $C_D> 0$ such that
\begin{align*}
    \left|  \oint_\Gamma f(z) dz - \sum_{j=0}^{m-1} f(z_j) w_j \right| \leq C_D(\Gamma) \|f^{(D)}\|_\infty m^{-D}.
\end{align*}

\begin{proof}[Proof of Theorem~\ref{t:GC}]
We prove the proposition for $\Gamma_j = \Gamma$ for all $j$.  The arguments easily extend to the general case.  Let $\Phi : \mathbb C^n \to \mathbb R$ be an admissible test function.  We approximate
\begin{align*}
    \frac{\sqrt{M}}{2 \pi i} \oint_{\Gamma} f_j(z) (c_0(z;\mu_T) - s_\mfd(z)) \sd z
\end{align*}
using the trapezoidal rule and consider
\begin{align*}
    \Delta_{M,m} :=\Phi\left(\frac{\sqrt{M}}{2 \pi i}\oint_{\Gamma} f_1(z) (c_0(z; \mu_T) - s_\mfd (z)) \sd z,\ldots, \frac{\sqrt{M}}{2 \pi i}\oint_{\Gamma} f_n(z) (c_0(z; \mu_T) - s_\mfd(z))\sd z \right) \\
      -  \Phi\left( \frac{\sqrt{M}}{2 \pi i} \sum_{j=1}^m f_1(z_j) (c_0(z_j; \mu_T) - s_\mfd(z_j))w_j,\ldots, \frac{\sqrt{M}}{2 \pi i} \sum_{j=1}^m f_n(z_j) (c_0(z_j; \mu_T) - s_\mfd(z_j))w_j \right).
\end{align*}
The choice of $m$ is critical.  Examining how the conclusion of Proposition~\ref{p:replace} depends on $n$, we need $m^5 < M^{1/2}$. So, we choose $m = M^{1/20}$. 

Because $\Phi$ is bounded, for $\delta > 0$ we can restrict to the event $\mathcal L_\delta = \{ \lambda_N \geq \gamma_- - \delta, \lambda_1 \leq \gamma_+ + \delta \}$, and there exists $C_D$ such that $\mathbb P(\mathcal L_\delta)  \geq 1 - C_D M^{-D}$ for all $D > 0$.  Furthermore we choose $\delta$ so that $[\gamma_- - \delta, \gamma_+ + \delta] \subset \Omega$.    By fixing $\delta$, on this event the integrands and all their derivatives up to order $E$ are bounded by $\sqrt{M} c_E$ for some $c_E > 0$.  Then, for example, on the event $\mathcal L_\delta$
\begin{align*}
    \left | \frac{\sqrt{M}}{2 \pi i}\oint_{\Gamma} f_1(z) (c_0(z; \mu_T) - s_\mfd (z))\sd z - \frac{\sqrt{M}}{2 \pi i} \sum_{j=1}^m f_1(z_j) (c_0(z_j; \mu_T) - s_\mfd (z_j))w_j\right| \leq \frac{C_E(\Gamma) c_E \sqrt{M}}{m^E}.
\end{align*}
Since $E$ can be chosen arbitrarily large, we then find
\begin{align*}
    |\Exp \Delta_{M,m} \mathbbm 1_{\mathcal L_{\delta}}| \leq C M^{-D}
\end{align*}
for any $D > 0$.  Therefore, it suffices to consider 
\begin{align*}
   \tilde{ \Delta}_{M,m}  := \Phi\left(\sqrt{M} \sum_{j=1}^m f_1(z_j) (c_0(z_j; \mu_T) - s_\mfd(z_j))w_j,\ldots, \sqrt{M}\sum_{j=1}^m f_n(z_j) (c_0(z_j; \mu_T) - s_\mfd(z_j))w_j \right) \\
      -  \Phi\left(\sqrt{M} \sum_{j=1}^m f_1(z_j) (c_0(z_j; \mu_{\tilde T}) - s_\mfd(z_j))w_j,\ldots, \sqrt{M}\sum_{j=1}^m f_n(z_j) (c_0(z_j; \mu_{\tilde{T}}) - s_\mfd(z_j))w_j \right).
\end{align*}
And, we are led to consider the function $\Psi: \mathbb C^m \to \mathbb R$
\begin{align}\label{eq:PsiPhi}
    \Psi(x_1,x_2,\ldots,x_m) = \Phi\left( \sum_{j=1}^m f_1(z_j) \frac{w_j}{2 \pi i} x_j,\ldots,\sum_{j=1}^m f_n(z_j) \frac{w_j}{2 \pi i} x_j\right).
\end{align}
Define $W \in \mathbb C^{n \times m} $ by $W_{\ell j} = f_\ell(z_j) \frac{w_j}{2 \pi i}$ and it follows that
\begin{align*}
    \partial_{x_{j_1}x_{j_2}\cdots x_{j_q}} \Psi(x_1,\ldots,x_m) &=  \sum_{k_1,k_2,\ldots,k_q = 1}^n \partial_{y_{k_1} y_{k_2} \cdots y_{k_p}} \Phi(y_1,\ldots,y_n) \left( \prod_{p=1}^q {W_{k_p,j_p}}\right),\\
    y_k &= \sum_{j=1}^m f_k(z_j) \frac{w_j}{2 \pi i} x_j.
\end{align*}
From this, we are able to estimate
\begin{align*}
   \left| \partial_{x_{j_1}x_{j_2}\cdots x_{j_q}} \Psi(x_1,\ldots,x_m)\right| &\leq  \max_{k_1,k_2,\ldots,k_1} |\partial_{y_{k_1} y_{k_2} \cdots y_{k_p}} \Phi(y_1,\ldots,y_n)|  \sum_{k_1,k_2,\ldots,k_q = 1}^n  \prod_{p=1}^q |{W_{k_p,j_p}}|\\
   & \leq  \max_{k_1,k_2,\ldots,k_1} |\partial_{y_{k_1} y_{k_2} \cdots y_{k_p}} \Phi(y_1,\ldots,y_n)| \max_{j} \| f_j\|_\infty^q \left( \frac{C}{2 \pi}\right)^q,
\end{align*}
where $C > 0$ is such that $\sum_j |w_j| \leq C$.  Note that $C$ can be chosen independent of $m$.  Now let $\epsilon > 0$ be sufficiently small so that
\begin{align*}
    |\partial_{x_{j_1}x_{j_2}\cdots x_{j_q}} \Phi(x_1,\ldots,x_n) | \leq M^{C_0 \epsilon} ~~\text{for}~~ \max_j |x_j| \leq M^{\epsilon}
\end{align*}
All arguments for $\Phi$ in \eqref{eq:PsiPhi} are uniformly bounded by $M^{\epsilon}$  for $\max_{j} |x_j| \leq M^{\epsilon}/\left(\frac{C}{2 \pi} \max_j \|f_j\|_\infty\right)$.  Thus
\begin{align*}
    |\partial_{y_{k_1} y_{k_2} \cdots y_{k_p}} \Psi(x_1,\ldots,x_\alpha)|  \leq M^{C_0\epsilon} \max_{j} \| f_j\|_\infty^p \left( \frac{C}{2 \pi}\right)^p
\end{align*}
By setting $L = \frac{C}{2 \pi} \max_j \|f_j\|_\infty$ we find that
\begin{align*}
    \tilde \Psi(x_1,x_2,\ldots,x_m) = \Phi\left( L^{-1} \sum_{j=1}^m f_1(z_j) \frac{w_j}{2 \pi i} x_j,\ldots,L^{-1}\sum_{j=1}^m f_n(z_j) \frac{w_j}{2 \pi i} x_j\right).
\end{align*}
is admissible with the same constant $C_0$.  Applying Proposition~\ref{p:replace} to $\tilde \Psi$ establishes the proposition.
\end{proof}

We also remark that these arguments, without the use of Proposition~\ref{p:replace}, can be used to show the following:

\begin{proposition}\label{prop:univ-lo}
  Suppose $W$ is a sample covariance matrix, $N/M \To[M] d \in (0,1) $ and $T = T(W,\vec b)$ for a sequence $\vec b = \vec b_N \in \mathbb C^N$ of non-trivial vectors.  Then
\begin{align*}
    \left(\int \lambda^k \mu_T(\sd \lambda)\right)_k \Wkto[M] \left( \int \lambda^k  \varrho_\mfd(\sd \lambda) \right)_k, 
\end{align*}
in the sense of convergence of finite-dimensional marginals where $k \geq 0$ if $d = 1$ and $k \in \mathbb Z$ if $d < 1$.
\end{proposition}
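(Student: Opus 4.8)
The plan is to follow the same scheme as the proof of Theorem~\ref{t:GC}, but with the Green's function replacement step (Proposition~\ref{p:replace}) deleted: since the claimed limit is deterministic, no moment matching is needed, and the anisotropic local law of Proposition~\ref{prop:tail}, which holds for every sample covariance matrix, already forces concentration. First I would fix a smooth simple closed contour $\Gamma=\Gamma_d$ enclosing the support $[\gamma_-,\gamma_+]$ of $\varrho_d$, chosen when $d<1$ to also remain bounded away from $0$, and pick $\delta>0$ small enough that the $\delta$-neighborhood of $[\gamma_-,\gamma_+]$ lies strictly inside $\Gamma$ (and avoids $0$ when $d<1$). On the event $\mathcal L_\delta=\{\lambda_N(W)\ge\gamma_- -\delta,\ \lambda_1(W)\le\gamma_+ +\delta\}$, which has probability tending to one by Theorem~\ref{t:geb} (equivalently, by the rigidity estimate of Proposition~\ref{p:rigidity}), the spectrum of $T=T(W,\vec b)$ lies inside $\Gamma$; and since $\mfd\to d$, for $M$ large the support of $\varrho_\mfd$ also lies inside $\Gamma$. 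Hence, using the contour representation $m_k(\mu_T)=\frac{1}{2\pi i}\oint_\Gamma z^k c_0(z;\mu_T)\,\sd z$ recalled in Section~\ref{sec:univ} (and the analogue for $\varrho_\mfd$ with $s_\mfd$ in place of $c_0$), on $\mathcal L_\delta$ one has
\[
\int\lambda^k\mu_T(\sd\lambda)-\int\lambda^k\varrho_\mfd(\sd\lambda)=\frac{1}{2\pi i}\oint_\Gamma z^k\left(c_0(z;\mu_T)-s_\mfd(z)\right)\sd z,
\]
valid for all $k\ge 0$ if $d=1$ and all $k\in\mathbb Z$ if $d<1$ (in the latter case because $z^k$ is analytic on and inside $\Gamma$, the point $0$ being excluded).

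The next step is to estimate the right-hand side. Since $c_0(z;\mu_T)=\vec b^*(W-zI)^{-1}\vec b=\vec b^*R(z)\vec b$, Proposition~\ref{prop:tail} applied with $\vec v=\vec w=\vec b$ gives, for each fixed $z\in\Gamma$ and any $\epsilon,D>0$, that $|c_0(z;\mu_T)-s_\mfd(z)|\le M^{\epsilon-1/2}$ with probability at least $1-CM^{-D}$. To turn this pointwise-in-$z$ bound into control of the integral I would discretize $\Gamma$ exactly as in the proof of Theorem~\ref{t:GC}: take a net (or the trapezoidal nodes) $z_1,\dots,z_m\in\Gamma$ with $m$ a fixed power of $M$ and with quadrature weights of total variation $O(1)$. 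On $\mathcal L_\delta$ the integrand and all its $z$-derivatives are bounded on a fixed neighborhood of $\Gamma$ by an $M$-independent constant (the distances from $\Gamma$ to $\operatorname{supp}\mu_T$ and to $\operatorname{supp}\varrho_\mfd$ are bounded below there), so Euler--Maclaurin makes the quadrature error $O(m^{-E})$ for every $E$. A union bound over the $m$ nodes (affordable since $D$ is arbitrary) gives $\max_j|c_0(z_j;\mu_T)-s_\mfd(z_j)|\le M^{\epsilon-1/2}$ on an event of probability tending to one, and there $\bigl|\sum_j z_j^k(c_0(z_j;\mu_T)-s_\mfd(z_j))w_j\bigr|\le CM^{\epsilon-1/2}$, using $|z_j^k|\le C$ uniformly on $\Gamma$ (which for $d<1$ holds even for negative $k$ because $\Gamma$ avoids $0$). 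Combining the two bounds, on an event of probability tending to one,
\[
\left|\int\lambda^k\mu_T(\sd\lambda)-\int\lambda^k\varrho_\mfd(\sd\lambda)\right|\le CM^{\epsilon-1/2}+Cm^{-E}\xrightarrow[M\to\infty]{}0
\]
for $\epsilon<1/2$ and $E$ large. Since $\int\lambda^k\varrho_\mfd(\sd\lambda)\to\int\lambda^k\varrho_d(\sd\lambda)$ as $\mfd\to d$, this says $\int\lambda^k\mu_T(\sd\lambda)\to\int\lambda^k\varrho_d(\sd\lambda)$ in probability.

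To conclude, I note that convergence in probability to a deterministic limit is the same as weak convergence to that constant; carrying out the previous step for each member of a finite index set and intersecting the finitely many high-probability events yields joint convergence of the corresponding marginals, i.e. convergence of finite-dimensional marginals as claimed. The only real obstacle is the passage from the pointwise local law of Proposition~\ref{prop:tail} to uniform control along $\Gamma$; this is exactly the quadrature/discretization device already used in the proof of Theorem~\ref{t:GC}, and here it is in fact lighter, since no fifth-order Taylor expansion and no moment matching are involved — only a union bound over polynomially many points together with Euler--Maclaurin.
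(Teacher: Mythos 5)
Your proof is correct and takes exactly the route the paper has in mind: the paper offers only the one-line remark that ``these arguments, without the use of Proposition~\ref{p:replace}, can be used to show'' Proposition~\ref{prop:univ-lo}, and your writeup spells out precisely that scheme --- the contour representation of the moments, restriction to the high-probability event $\mathcal L_\delta$ via Theorem~\ref{t:geb}/Proposition~\ref{p:rigidity}, trapezoidal discretization of $\Gamma$ with Euler--Maclaurin to control the quadrature error, the anisotropic local law of Proposition~\ref{prop:tail} with a union bound over polynomially many nodes, and the elementary passage from convergence in probability to a constant to distributional convergence jointly over finitely many $k$. The handling of negative $k$ (choosing $\Gamma$ to enclose $[\gamma_-,\gamma_+]$ but exclude $0$ when $d<1$) is also the right observation and matches what the paper needs.
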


\section{Analysis of the algorithms}\label{sec:analysis}

The important fact that we use to prove Theorems~\ref{t:main-lo} and \ref{t:main} is that the entries in the Cholesky factorization of the three-term recurrence matrix associated to a measure $\mu$ are (generically) differentiable functions of the moments of the measure.  This implies that the leading-order behavior (Theorem~\ref{t:main-lo}) is the same as in the Gaussian case and that, with the moment matching condition (Definition~\ref{def:moment}), the fluctuations must be the same as in the Gaussian case (Theorem~\ref{t:main}).  So, it suffices to prove Theorem~\ref{t:main} in the case of $X$ having $\mathcal N_{\beta}(0,1/M)$ entries.  The following three sections do just this.

\subsection{Proofs for the conjugate gradient algorithm}

The basis for our analysis is Proposition~\ref{prop:errors} and Theorem~\ref{t:lanczos}.   In this section we suppose $W \lawequals \mathcal W_{\beta}(N,M)$, $N \leq M$ and $\vec b = \vec b_N \in \mathbb C^{N}$ (or $\mathbb R^N$ if $\beta = 1$).  And we recall the notation that $\vec x_k = \vec x_k(W,\vec b)$ is the $k$-th iterate of the CGA applied to $W \vec x = \vec b$ and $\vec r_k = \vec b - W \vec x_k$, $\vec e_k = \vec x - \vec x_k$.  

\subsubsection{Non-asymptotic calculations}\label{sec:CG-non}

Using the notation \eqref{eq:Hdef}, with $T = T(W,\vec b) = HH^T$ it follows that
\begin{align*}
    \pi_k(0;\mu_T) = (-1)^{k+1}\prod_{j=0}^{k-1} \alpha_j^2, \quad T_{j,j+1} = \alpha_j \beta_j,
\end{align*}
and therefore
\begin{align}\label{eq:rk_form_CG}
    \|\vec r_k\|_2^2 = \prod_{j=0}^{k-1} \frac{\beta_j^2}{\alpha_j^2},
\end{align}
where the chi squared random variables are all mutually independent.  This formula lends itself easily to asymptotic analysis.

Deriving a distributional expression for $\|\vec e_k\|_W^2$  is more involved.  With the convention that $b_{-1} = 1$
\begin{align*}
    \begin{bmatrix} \pi_{k+1}(x;\mu_T) \\ \pi_{k}(0;\mu_T) \end{bmatrix} = \begin{bmatrix}x-a_k & -b_{k-1}^2 \\ 1 & 0 \end{bmatrix}\begin{bmatrix}x-a_{k-1} & -b_{k-2}^2 \\ 1 & 0 \end{bmatrix} \cdots \begin{bmatrix}x-a_0 & -b_{-1}^2 \\ 1 & 0 \end{bmatrix} \begin{bmatrix} 1 \\ 0 \end{bmatrix},
    \end{align*}
    Then define the complementary polynomials
    \begin{align*}
        \begin{bmatrix} \tilde \pi_{k+1}(x;\mu_T) \\ \tilde \pi_{k}(x;\mu_T) \end{bmatrix} &= \begin{bmatrix}x-a_k & -b_{k-1}^2 \\ 1 & 0 \end{bmatrix}\begin{bmatrix}x-a_{k-1} & -b_{k-2}^2 \\ 1 & 0 \end{bmatrix} \cdots \begin{bmatrix}x-a_0 & -b_{-1}^2 \\ 1 & 0 \end{bmatrix} \begin{bmatrix} 0 \\ 1 \end{bmatrix}.
    \end{align*}
Decompose
\begin{align*}
c_k(0;\mu_T) = c_0(0;\mu_T) \pi_k(0;\mu_T) - \tilde \pi_k(0;\mu_T).
\end{align*}
Then
\begin{align*}
\tilde \pi_k(0;\mu_T) = (-1)^{k+1}\sum_{\ell=0}^{k-1} \left(\prod_{j=1}^\ell \beta_{j-1}^2\right)\left( \prod_{j=\ell+1}^{k-1} \alpha_j^2\right),
\end{align*}
giving
\begin{align*}
    \frac{\tilde \pi_k(0;\mu_T)}{\pi_k(0;\mu_T)} = \frac{1}{\alpha_0^2}\sum_{\ell=0}^{k-1} \prod_{j=1}^{\ell} \frac{\beta_{j-1}^2}{\alpha_j^2},
\end{align*}
where the empty product returns one. From Proposition~\ref{p:finite}
\begin{align*}
    c_0(0,\mu_T) = \frac{1}{\alpha_0^2} \left( 1 +  \sum_{\ell=1}^{k-1} \prod_{j = 1}^\ell \frac{\beta_{j-1}^2}{\alpha_{j}^2} + \left(\beta_{k-1}^2  \Sigma_k^{-2} \right)\prod_{j = 1}^{k-1} \frac{\beta_{j-1}^2}{\alpha_{j}^2}\right),
\end{align*}
where $\Sigma_k \lawequals \frac{\chi_{\beta(M -N +1)}}{\sqrt{\beta M}}$ is independent of $(\alpha_j,\beta_j)_{j = 0}^{k-1}$. We find
\begin{align}\label{eq:ek_form}
    \|\vec e_k\|_W^2 = \Sigma_k^{-2} \prod_{j=0}^{k-1} \frac{\beta_j^2}{\alpha_j^2}.
\end{align}
where the chi squared random variables are all mutually independent.  This establishes Theorem~\ref{t:GaussianOnly}(a) and Theorem~\ref{t:deterministic} follows as well.

\subsubsection{Asymptotic calculations}

\begin{proof}[Proof of Theorems~\ref{t:main-lo}(a) and \ref{t:main}(a) when $\sqrt{M} X \lawequals \mathcal G_{\beta}(N,M)$]

Decompose
\begin{align*}
c_0(0;\mu_T) = \frac{1}{1-\mfd} + \frac{\sqrt{2}}{\sqrt{\beta M}} R(\mu_T), \quad T = T(W,\vec b),
\end{align*}
using Proposition~\ref{prop:dist-limit}.  In the notation of this proposition $R(\mu_T) \Wkto[M] Z_{-1}/\sqrt{2}$.  Then using the complementary polynomials
\begin{align*}
    \frac{c_k(0;\mu_{T})}{\pi_k(0;\mu_{T})} =  \frac{(1-\mfd)^{-1} \pi_k(0;\mu_T) - \tilde \pi_k(0;\mu_T)}{\pi_k(0;\mu_T)} + \frac{\sqrt{2}}{\sqrt{\beta M}} R(\mu_T).
\end{align*}

We write $T = HH^T$, again using the notation \eqref{eq:Hdef}.  Using the distributional limit described in Proposition~\ref{prop:dist-limit} one can compute the large $N$ behavior.  Specifically, we use \eqref{eq:ajs} and \eqref{eq:bjs} extensively.  Using the same process $(Z_j)_{j\geq 1}$ write
\begin{align*}
    A_k:=\begin{bmatrix} - a_k & - b_{k-1}^2 \\ 1 & 0 \end{bmatrix} &= \hat E + \sqrt{ \frac{2}{\beta M} } \check E_k + o(M^{-1/2}),\\
    \hat E &= \begin{bmatrix} -1 -\mfd & -\mfd \\ 1 & 0 \end{bmatrix},\\
    \check E_k &= \begin{bmatrix} - Z_{2k+1} - \sqrt{\mfd} Z_{2k} & -\mfd Z_{2k-1} - \sqrt{\mfd} Z_{2k} \\
    0 & 1 \end{bmatrix}
\end{align*}
We compute the asymptotics of the quantity
\begin{align*}
    \begin{bmatrix} 1 & 0 \end{bmatrix} A_{k-1} A_{k-2} \cdots A_1 \begin{bmatrix} - \alpha_0^2 & -1 \\ 1 & 0 \end{bmatrix} \begin{bmatrix} \frac{1}{1-\mfd} \\ -1 \end{bmatrix}
\end{align*}
using that
\begin{align*}
    \hat E &= V \Lambda V^{-1}, \quad V = \begin{bmatrix} -1 & -\mfd \\ 1 & 1 \end{bmatrix}, \quad V^{-1} = \frac{1}{1-\mfd} \begin{bmatrix} -1 & -\mfd \\ 1 & 1 \end{bmatrix}, \\
    \Lambda &= \mathrm{diag}(-1,-\mfd).
\end{align*}
We find that 
\begin{align*}
    &\begin{bmatrix} 1 & 0 \end{bmatrix} \hat E^{k-j-1} \check E_j \hat E^{j-1} \begin{bmatrix} - 1 & -1 \\ 1 & 0 \end{bmatrix} \begin{bmatrix} \frac{1}{1-\mfd} \\ -1 \end{bmatrix} \\
    &= (-1)^{k+1}  \frac{\mfd^j-\mfd^{k}}{(1-\mfd)^2} \left[Z_{2j+1} + \sqrt{\mfd}\, Z_{2j} - Z_{2j-1} - Z_{2j}/\sqrt{\mfd} \right].
\end{align*}
Similarly,
\begin{align*}
\begin{bmatrix} 1 & 0 \end{bmatrix} \hat E^{k-1} \begin{bmatrix} - Z_1 & -1 \\ 1 & 0 \end{bmatrix} \begin{bmatrix} \frac{1}{1-\mfd} \\ -1 \end{bmatrix} = (-1)^{k} \frac{ 1-\mfd^k}{(1-\mfd)^2}Z_1.
\end{align*}
 
Therefore it remains to analyze
\begin{align*}
    \prod_{j=0}^{k-1} \alpha_j^2 &= 1 + \frac{\sqrt{2}}{\sqrt{\beta M}} \sum_{j=0}^{k-1} Z_{2j+1} + o(M^{-1/2}),\\
    \prod_{j=0}^{k-1} \beta_j^2 &= \mfd^k + \frac{\sqrt{2}}{\sqrt{\beta M}} \sum_{j=0}^{k-1} \mfd^{k-1/2} Z_{2j+2} + o(M^{-1/2}).
\end{align*}
The distributional limit of $R(\mu_T)$ is provided by Proposition~\ref{prop:dist-limit}.  So, our final expressions become
\begin{align*}
    \|\vec e_k(W,\vec b)\|^2_W =& \frac{\mfd^k}{1-\mfd} \left( 1 + \frac{\sqrt{2}}{\sqrt{\beta M}} \left[ \sum_{j=k}^\infty \mfd^{j-k} (Z_{2j}/\sqrt{\mfd} - Z_{2j+1}) \right.\right. \\
    & \left.\left. + \sum_{j=1}^{k-1} (Z_{2j}/\sqrt{\mfd} - Z_{2j-1}) - Z_{2k-1}\right] \right) + o(M^{-1/2}),\\
    \|\vec r_k(W,\vec b)\|^2_2 =& {\mfd^k} \left( 1 + \frac{\sqrt{2}}{\sqrt{\beta M}}\left[\sum_{j=0}^{k-1} \left(Z_{2j+2}/\sqrt{\mfd} -  Z_{2j+1} \right)\right]  \right) + o(M^{-1/2}).
\end{align*}
The theorem follows.
\end{proof}

\subsection{Proofs for the MINRES algorithm}

\subsubsection{Non-asymptotic calculations}\label{sec:MINRES-non}

The proof of Theorem~\ref{t:deterministic}(b) is immediate from the simple formula
\begin{align*}
    p_j(0,\mu_T) = \frac{\det( -T_j)}{\prod_{\ell = 0}^{j-1} b_j^2} =  (-1)^j \prod_{\ell = 0}^{j-1} \frac{\beta_\ell}{\alpha_\ell},
\end{align*}
using \eqref{eq:Hdef}.  Then using \eqref{eq:dist_H}, Theorem~\ref{t:GaussianOnly}(b) follows.

\subsubsection{Asymptotic calculations}

\begin{proof}[Proof of Theorems~\ref{t:main-lo}(b) and \ref{t:main}(b) when $\sqrt{M} X \lawequals \mathcal G_{\beta}(N,M)$]
It suffices to prove Theorem~\ref{t:main}(b) in this case.  From Theorem~\ref{t:main}(b) we have that 
\begin{align*}
    \prod_{\ell = 0}^{j-1} \frac{\beta^2_\ell}{\alpha^2_\ell} = \mfd^j\left(1   + \frac{\sqrt{2}}{\sqrt{\beta M}} \tilde Z_j^{\vec r, \mathrm{CG}}\right) + o(M^{-1/2}), \quad \tilde Z_j^{\vec r, \mathrm{CG}} = d^{-j} Z_j^{\vec r, \mathrm{CG}}.
\end{align*}
This implies
\begin{align*}
    \sum_{j=0}^{k}\prod_{\ell = 0}^{j-1} \frac{\alpha^2_\ell}{\beta^2_\ell} = \sum_{j=0}^{k} \mfd^{-j} - \frac{\sqrt{2}}{\sqrt{\beta M}}\sum_{j=1}^{k}\mfd^{-j}\tilde Z_j^{\vec r, \mathrm{CG}} + o(M^{-1/2}).
\end{align*}
And this gives
\begin{align*}
\left(\sum_{j=0}^{k}\prod_{\ell = 0}^{j-1} \frac{\alpha^2_\ell}{\beta^2_\ell}\right)^{-1} =& \frac{1 - \mfd^{-1}}{1-\mfd^{-k-1}} + \left(\frac{1 - \mfd^{-1}}{1-\mfd^{-k-1}}\right)^2 \frac{\sqrt{2}}{\sqrt{\beta M}}\sum_{j=1}^{k}\mfd^{-j}\tilde Z_j^{\vec r, \mathrm{CG}}\\
&+ o(M^{-1/2}).
\end{align*}
In writing
\begin{align*}
    \frac{1 - \mfd^{-1}}{1-\mfd^{-k-1}} = \mfd^k\frac{1 -\mfd}{1-\mfd^{k+1}},
\end{align*}
we establish the theorem.
\end{proof}

\subsection{Proofs for the conjugate gradient algorithm applied to the normal equations}\label{sec:CG-ne}

First, observe that for $\alpha > 0$
\begin{align}\label{eq:op_obs}
    p_j(\lambda;\alpha \mu) = \frac{p_j(\lambda; \mu)}{\sqrt{\alpha}}.
\end{align}
Consider the distribution of the measure $\nu$ as defined in \eqref{eq:nu}, and in particular, the distribution on the absolute value of the vector $V^* \vec b$  where $X = U \Sigma V^*$ is  the singular value decomposition of $X$.  We know that $V$ can be taken to be Haar distributed on either the orthogonal ($\beta = 1$) or unitary ($\beta = 2$) group \cite{Edelman2005}.  By invariance, if $\|\vec b \|_2 = 1$ then $\vec b$ can be replaced with $\vec f_1$.   From this it follows that for $T = T(W,\vec a)$
\begin{align*}
    \mu_T \lawequals \sum_{j=1}^N \omega_j \delta_{\lambda_j}, \quad \omega_j \lawequals \frac{ \chi_{\beta,j}^2 }{\displaystyle\sum_{\ell=1}^N \chi_{\beta,\ell}^2 }, \quad j = 1,2,\ldots,N,
\end{align*}
and $(\lambda_1,\ldots,\lambda_N)$ are the eigenvalues of $W$ which are independent of $(\omega_1,\ldots,\omega_N)$.  So, we find that, in the notation of Theorem~\ref{t:GaussianOnly}(c)
\begin{align*}
    \nu \lawequals \underbrace{\left(\frac{\displaystyle\sum_{\ell=1}^N \chi_{\beta,\ell}^2 }{\displaystyle\sum_{\ell=1}^M \chi_{\beta,\ell}^2 }\right)}_{\Delta_{N,M}} \sum_{j=1}^N \omega_j \delta_{\lambda_j}.
\end{align*}
Combined with \eqref{eq:op_obs}, this gives the proof of Theorem~\ref{t:GaussianOnly}(c).  And then Theorem~\ref{t:main}(c) and Theorem~\ref{t:main-lo}(c), in the Gaussian case, follow.

\bibliographystyle{amsalpha}
\bibliography{references,ERefs}

\end{document}